\theoremstyle{plain}
\newtheorem{theorem}{Theorem}[section]
\newtheorem{proposition}[theorem]{Proposition}
\newtheorem{lemma}[theorem]{Lemma}
\newtheorem{corollary}[theorem]{Corollary}
\theoremstyle{definition}
\newtheorem{defn}[theorem]{Definition}
\newtheorem{example}[theorem]{Example}
\numberwithin{figure}{section}
\newcommand{\R}{\mathbb{R}}
\newcommand{\C}{\mathbb{C}}
\theoremstyle{plain}
\newtheorem{remark}[theorem]{Remark}
\tikzset{
        hatch distance/.store in=\hatchdistance,
        hatch distance=5pt,
        hatch thickness/.store in=\hatchthickness,
        hatch thickness=5pt,
        every point/.style = {circle, inner sep={.75\pgflinewidth}, opacity=1, draw, solid, fill=white},
        point/.style={insert path={node[every point, #1]{}}}, point/.default={}
        }
\pgfqpoint{\hatchdistance}{\hatchdistance}}
\DeclareMathOperator{\Ker}{\mathrm{Ker}}
\DeclareMathOperator{\Sym}{Sym}
\DeclareMathOperator{\skel}{skel}
\DeclareMathOperator{\PP}{\mathbb{P}}
\DeclareMathOperator{\bR}{\mathbb{R}}
\newcommand{\CC}{\mathbb C}
\newtheorem*{theoremA}{Theorem A}
\newtheorem*{theoremB}{Theorem B}
\title{Sectorial Decompositions of Symmetric
Products of Surfaces}
\author{Xinle Dai}
\date{}
\begin{document}
\maketitle
\onehalfspacing

\begin{abstract}
Symmetric products of Riemann surfaces play a crucial role in symplectic geometry and low-dimensional topology. Symmetric products of punctured surfaces are Liouville manifolds of interest e.g. for Heegaard Floer theory. We study the symplectic topology of these spaces using Liouville sectorial techniques, along with examples and applications of these decompositions in the context of homological mirror symmetry. More precisely, we show that a sectorial decomposition of a Riemann surface along a union of arcs induces a sectorial decomposition of its second symmetric product and as an application, we give a new geometric proof of Homological Mirror Symmetry for the complex two dimensional pair of pants.
\end{abstract}
\tableofcontents
\section{Introduction}

Mirror symmetry was first observed as an unexpected phenomenon in string theory in the late $1980$s. This phenomenon bridges two seemingly unrelated areas of mathematics: algebraic geometry and symplectic geometry. The resulting conjectures have since revolutionized both fields. In $1994$, at the ICM, Kontsevich \cite{Kon94} proposed that mirror symmetry reflects an equivalence of two categories
\[
D^\pi \mathrm{Fuk} (X) \cong D^b \mathrm{Coh}(\check{X}),
\]
between the derived Fukaya category of a Calabi--Yau manifold $X$  and the derived category of coherent sheaves on the mirror Calabi-Yau $\check{X}$. This formulation, known as homological mirror symmetry, has been generalized beyond Calabi-Yau manifolds. While verifying homological mirror symmetry for various examples is an active field, it also serves as a tool for transferring ideas and insights between symplectic and algebraic geometry.

A particularly timely direction in the study of homological mirror symmetry is to understand how Fukaya categories and the equivalence conjectured by Kontsevich behave under geometric decompositions of spaces into simpler building blocks. The setting in which this has been most successful so far are Liouville manifolds, which are non-compact symplectic manifolds with exact symplectic structure and cylindrical ends. 
However, when Liouville manifolds have boundaries, defining Floer homology becomes challenging because the pseudoholomorphic curves (whose counts are used to define operations in Floer theory) might escape through the boundary. This leads to the notion of \emph{Liouville sector}, first introduced in a recent work of Ganatra-Pardon-Shende \cite{MR4106794}.

This paper explores the application of this technology to the study of symmetric products of open Riemann surfaces, which are smooth complex manifolds. It is a particularly important class of Liouville manifolds in light of applications to low-dimensional topology, such as Heegaard Floer homology by Ozsv\'ath and Szab\'o \cite{MR2113020}. Tim Perutz \cite{Per05} gave a careful description of a symplectic structure on the symmetric product. (The naive symplectic structure on the symmetric product induced by the symplectic form on the surface is not smooth on the diagonal.) Auroux \cite{MR2755992, MR2827825} provided a symplectic interpretation of bordered Heegaard-Floer homology through Fukaya categories of symmetric products. This application provides further motivation to study the symplectic geometry and the Fukaya categories of symmetric products.

We study the geometry of the second symmetric products of surfaces by constructing a decomposition of these spaces into Liouville sectors. This provides a symplectic insight into the structure of symmetric products and further exploration of its applications of the machinery of sectors in homological mirror symmetry. The main result of this paper is the following geometric decomposition theorem, which is restated and proved in section $5$.

\begin{theoremA}
Let $\Sigma$ be a topological surface with disjoint proper embedded arcs $\{\gamma_i\}_{i\in I}$. For suitably defined Stein structures on $\Sigma$ and $\Sym^2(\Sigma)$, the arcs $\{\gamma_i\}_{i\in I}$ determines a sectorial collection of smooth hypersurfaces $H_{_i}$ in $\Sym^2(\Sigma)$ which separates the symmetric product into Liouville sectors. 
\end{theoremA}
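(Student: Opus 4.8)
The plan is to reduce the statement to explicit normal forms in $\Sym^2(\C)$ and then to verify the conditions of a sectorial collection from \cite{MR4106794} one at a time. We take as input the Stein structures of the preceding sections: on $\Sigma$, a plurisubharmonic function adapted to the arcs, so that near each $\gamma_i$ there is a holomorphic coordinate $w_i$ with $\gamma_i = w_i^{-1}(\R)$, with the Liouville flow tangent to $\{\operatorname{Im} w_i = 0\}$, and with $\{\gamma_i\}_{i\in I}$ a sectorial collection cutting $\Sigma$ into Liouville sectors $\{P_\alpha\}$; and on $\Sym^2(\Sigma)$, the Stein structure built from the symmetrized potential $\phi(z_1)+\phi(z_2)$ by the Perutz-type modification near the diagonal $\Delta$, which coincides with the quotient of the product structure away from $\Delta$ and has a fixed normal form near it. The pairs of functions $w_i(z_1), w_i(z_2)$ assemble into a holomorphic map from a neighbourhood of the locus of pairs meeting $\gamma_i$ to $\Sym^2(\C) = \C^2$, and this map is the source of the local models used throughout.

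Set-theoretically the wall attached to $\gamma_i$ ought to be $\widetilde H_i := \pi(\gamma_i \times \Sigma) = \{\,\{x,y\} : x\in\gamma_i \text{ or } y\in\gamma_i\,\}$, where $\pi\colon \Sigma\times\Sigma \to \Sym^2(\Sigma)$ is the quotient. Away from $\Delta$ and away from $\Sym^2(\gamma_i)$ (pairs of distinct points both on $\gamma_i$), $\widetilde H_i$ is already a smooth hypersurface -- in a product chart it is of the form $\gamma_i\times(\text{open set})$ -- and its sectorial data is pulled back directly from that of $\gamma_i\subset\Sigma$. Along $\Sym^2(\gamma_i)$, however, $\widetilde H_i$ is the transverse union of two sheets (locally $\{(\operatorname{Im} w_i(z_1))\,(\operatorname{Im} w_i(z_2)) = 0\}$), degenerating further where this locus meets $\Delta$. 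One therefore defines $H_i$ by a $\Z/2$-equivariant smoothing of $\widetilde H_i$ supported in a conical neighbourhood of $\Sym^2(\gamma_i)$: in the model one replaces $\{(\operatorname{Im} w_i(z_1))\,(\operatorname{Im} w_i(z_2)) = 0\}$ by a nearby smooth level set interpolating to $\widetilde H_i$ outside the neighbourhood, arranged so that $H_i$ remains a sectorial hypersurface for the modified symplectic form -- in particular coisotropic near infinity with the expected characteristic foliation. Since the $\gamma_i$ are disjoint, the support of this smoothing can be kept away from the other walls, so $H_i\cap H_j = \pi(\gamma_i\times\gamma_j)\cong\gamma_i\times\gamma_j$, along which $H_i$ and $H_j$ meet transversally; and there are no triple intersections, because a third wall would require a third point to lie on some $\gamma_k$, which is impossible in $\Sym^2$. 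Thus the only sectorial corners are these clean pairwise ones.

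It remains to check the definition of a sectorial collection and to recognise the complementary pieces. Smoothness and mutual transversality are the steps above; the defining functions $I_i$, linear in the Liouville coordinate at infinity with $H_i = I_i^{-1}(0)$, are pulled back from $\C^2$; and on each corner $H_i\cap H_j$ the characteristic foliations are manifestly parallel, since in the product region (away from $\Delta$) $I_i$ and $I_j$ depend on separate variables and Poisson-commute. Cutting $\Sym^2(\Sigma)$ along $\bigcup_{i} H_i$ produces the regions $\Sym^2(P_\alpha)$ and $P_\alpha\times P_\beta$ for $\alpha\neq\beta$; a finite product of Liouville sectors is a Liouville sector, which handles $P_\alpha\times P_\beta$, and for $\Sym^2(P_\alpha)$ one applies the relative version of the same construction (or refines $P_\alpha$ into standard pieces and iterates). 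Hence each complementary region is a Liouville sector with corners, and the theorem follows.

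The step I expect to be the principal obstacle is the smoothing of $\widetilde H_i$ near $\Sym^2(\gamma_i)$. The smoothed hypersurface must at once be $\Z/2$-invariant and globally well defined, be compatible with the Perutz-type modification of the symplectic structure -- because $\Sym^2(\gamma_i)$ accumulates onto $\Delta$, where the structure is no longer the naive product one -- and still satisfy the coisotropy condition at infinity, so that the sectorial axioms are not broken by the perturbation. Keeping control of all three simultaneously, through the $\Sym^2(\C)$ normal forms, is the technical heart of the argument.
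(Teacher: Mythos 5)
The crux of the proof is exactly where you flag it, and your proposed route through that crux is not the paper's and does not actually resolve it. You start from the naive wall $\widetilde H_i=\pi(\gamma_i\times\Sigma)$ and propose a direct $\Z/2$-equivariant smoothing of this singular set in a neighbourhood of $\Sym^2(\gamma_i)$ and $\Delta$, while simultaneously preserving the coisotropy/sectorial conditions for the (Perutz-modified) symplectic form. You correctly identify the three constraints that must be held simultaneously, but you do not actually produce such a smoothing, nor a mechanism that guarantees it can be done; since the Liouville vector field near $\Sym^2(\gamma_i)\cap\Delta$ is not the naive product one, there is no reason an arbitrary interpolating hypersurface will stay $Z$-invariant, stay coisotropic, or admit an $I_i$ with $ZI_i=\alpha I_i$ at infinity. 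This is a genuine gap, not a deferred technicality.

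The missing idea is that the paper never smooths the hypersurface at all. Instead it smooths the \emph{potential}: one perturbs $\Phi=\varphi(z_1)+\varphi(z_2)$ near the diagonal to a smooth plurisubharmonic $\widetilde\Phi$ on $\Sym^2(\Sigma)$, so that the Liouville/gradient flow is globally smooth, and then \emph{defines} $H_{s_i}$ as the stable manifold (closure) of the index-one critical points $(s_i,m_j)$ of $\widetilde\Phi$. Smoothness of $H_{s_i}$ is then automatic from stable-manifold theory, and the explicit local model (Section~3) shows $H_{0,\pm}=\{\mathrm{Re}(z_0)\pm c(\sqrt{w_0})=0\}$ for a smooth even function $c$, so the two sheets you were trying to glue are actually two \emph{disjoint} smooth hypersurfaces ($H_{0,-}\cap H_{0,+}=\emptyset$), not one smoothed wall. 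Tangency of $Z$ to $H_{s_i}$ is built in because $H_{s_i}$ is a union of flow lines; the defining functions $I_i$ are produced by pushing $\mathrm{Im}(z_1)$ back along the flow from the unperturbed region $V_{s_i}$, which makes $ZI_i=\tfrac32 I_i$, $dI_i|_{\mathrm{char.fol.}}$ and $\{I_i,I_j\}=0$ one-line flow computations rather than conditions one must impose on a smoothing by hand. So where your sketch says ``arranged so that $H_i$ remains a sectorial hypersurface,'' the paper replaces this hope with a construction that yields it for free. Your identification of the complementary regions with $\Sym^2(P_\alpha)$ and $P_\alpha\times P_\beta$ also needs the qualifier that these are Liouville-equivalent to the pieces only after completion; the actual sectors $U_{m_i,m_j}$ are stable manifolds of the index-zero critical points, and the identification with (completions of) symmetric products and products is a separate argument in Section~6, not an instance of the same construction.
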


Roughly, $H_i$ corresponds to the set of configurations of points on $\Sigma$ where one of the points is on the arc $\gamma_i$. However, it is difficult to construct a well-behaved geometric decomposition of the symmetric product directly from a geometric decomposition of $\Sigma$, because symmetric products of manifolds with boundaries have badly behaved corners and singularities.

In general, this theorem states that given a surface's sectorial decomposition, we can construct a sectorial decomposition of its second symmetric product.

The proof of theorem A relies on two main ingredients. One is the observation that the ascending manifold of an index one critical point of a Stein potential defines a sectorial hypersurface. The other ingredient is the
calculation on a local model described in section $3$. The methods used to prove Theorem A also imply other results for sectors and symmetric products of Riemann surfaces. For example,

\begin{theoremB}
For any $2$-dimensional Liouville sector $X$, the second symmetric product of $X$ is deformation equivalent to a Liouville sector $Y$. Moreover, the completion $\widehat{Y}$ is deformation equivalent to the second symmetric product $Sym^2(\widehat X)$ of the completion of $X$.
\end{theoremB}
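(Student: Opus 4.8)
The plan is to reduce $X$ to a Stein sector, present it as a chamber of a cut-open Liouville surface, apply Theorem~A to that surface, and then recognize the pieces of the resulting sectorial decomposition of the symmetric product. First, using the standard fact --- classical in dimension two --- that every $2$-dimensional Liouville sector is deformation equivalent to a Stein sector, we may assume $X$ is a Stein sector with $J$-convex potential $\phi$ on its completion $\widehat X$. Topologically $\widehat X$ is an open surface whose sectorial boundary is a disjoint union of properly embedded lines $L_1,\dots,L_m$; gluing a cap onto each $L_j$ produces an open surface $\Sigma$ without boundary, and one can extend $\phi$ across the caps to a $J$-convex function on $\Sigma$ for which each former boundary line becomes the ascending manifold $\gamma_j$ of a newly created index one critical point. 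Then $X$ is deformation equivalent to one of the chambers, say $C_1$, of $\Sigma\setminus\bigcup_j\gamma_j$, and $\widehat X$ to its completion. This places us in the setting of Theorem~A applied to $(\Sigma,\{\gamma_j\})$, with the arcs already of the special form used in its proof.

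Applying Theorem~A to $(\Sigma,\{\gamma_j\})$ produces a sectorial collection of hypersurfaces $\{H_j\}$ in $\Sym^2(\Sigma)$, and cutting along them yields Liouville sectors. If $C_1,C_2,\dots$ are the chambers of $\Sigma\setminus\bigcup_j\gamma_j$, these pieces are the $\Sym^2(C_a)$ together with the products $C_a\times C_b$ for $a\neq b$. Let $Y$ be the piece $\Sym^2(C_1)$, taken after the corner-smoothing implicit in the sectorial-collection conclusion of Theorem~A; it is a Liouville sector. Note that $\Sym^2(X)$ is not literally a Liouville sector --- it has corners along configurations in which both points approach the sectorial boundary, and along those with one point at an intersection $\gamma_i\cap\gamma_j$ --- so the point is that this corner-smoothing, which is a Liouville deformation, converts $\Sym^2(X)$ into the honest sector $Y$.

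To prove the first assertion one identifies $Y$ with $\Sym^2(X)$ up to deformation by comparing the symplectic structures away from and near the diagonal $\Delta$. On $\Sym^2(X)\setminus\Delta\cong(X\times X\setminus\Delta)/(\Z/2)$ the Perutz-type structure is the product Liouville structure, the hypersurface $H_j$ restricts to the cut of $X\times X$ along $(\gamma_j\times X)\cup(X\times\gamma_j)$, and the identification is immediate. Near $\Delta$ one invokes the local model of section~$3$: it shows that the Stein structure on $\Sym^2(\Sigma)$ restricted to a neighborhood of the diagonal is deformation equivalent to the Perutz-type structure on $\Sym^2(X)$, and that the $H_j$ remain sectorial along $\Delta$. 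Interpolating between the interior and diagonal descriptions compatibly with the Liouville flow then yields the desired deformation equivalence of Liouville sectors $Y\simeq\Sym^2(X)$.

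For the second assertion it suffices to show that forming the second symmetric product commutes with completion up to deformation, i.e.\ $\widehat{\Sym^2(X)}\simeq\Sym^2(\widehat X)$; combined with the identification above and the fact that completion respects deformation equivalence, this gives $\widehat Y\simeq\Sym^2(\widehat X)$. Completing $X$ attaches a cylindrical end along $\partial_\infty X$, and one must check that the induced operation on $\Sym^2(X)$ --- attaching a cylindrical end along $\partial_\infty\Sym^2(X)$, a contact manifold with corners built from $\partial_\infty X$ and $X$ in a symmetric-product fashion --- coincides with $\Sym^2$ of the completion of $X$. This is a computation near infinity, where, again by the local models of section~$3$, the Perutz-type structure is a deformation of an explicit symmetric-product model on which the Liouville field acts by the expected rescaling, so the two completions agree. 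The main obstacle throughout is the identification of $Y$ with $\Sym^2(X)$ together with the corner-smoothing: one must verify that the Perutz modification near the diagonal interacts with the corners of $\Sym^2(X)$ and with the hypersurfaces $H_j$ in a manner that can be straightened by an honest Liouville \emph{deformation} rather than a mere diffeomorphism --- which is exactly what the local computation of section~$3$ is designed to control --- and the near-infinity analysis required for the completion is of the same character, with extra bookkeeping because $\partial_\infty\Sym^2(X)$ carries corners of its own.
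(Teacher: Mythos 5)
Your overall strategy matches the paper's: reduce $X$ to a quadratic Stein sector (the paper's Lemma~\ref{2dsector} and the proposition that follows it), cap off the sectorial boundary arcs with half-planes to obtain a surface $\Sigma$ with a quadratic Stein structure and one new saddle per boundary arc (the paper's Step~1, using Proposition~\ref{global}), apply Theorem~A to $(\Sigma,\{\gamma_j\})$, and take $Y$ to be the resulting piece $U_{m,m}$. Up to that point you are tracking the paper's argument almost verbatim.

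Where you diverge is in how the two identifications are actually established, and in both cases the paper has a cleaner mechanism than what you sketch. For the first assertion you propose to compare the Perutz-type structure on $\Sym^2(X)$ to the Stein structure on $U_{m,m}$ separately away from and near the diagonal and then ``interpolate''; the paper instead proves directly (Corollary~\ref{sym2} of the local model) that $U_{m,m}$ sits inside $\Sym^2(\{\mathrm{Re}(z)\le 0\})$ as a deformation retract along the Liouville flow, so the deformation equivalence is built into the stable-manifold construction rather than glued together afterward. Your patching argument is in the right spirit---Section~3 does control the diagonal---but ``interpolating between the interior and diagonal descriptions compatibly with the Liouville flow'' is a statement that still needs to be turned into an actual deformation, and the retraction is the way the paper does it. For the second assertion you gesture at ``a computation near infinity,'' but the paper's Proposition~\ref{sym} works through a specific device: it exhibits a sector domain $U_{m,m}^-$ (cut out by $\mathrm{Re}(z_i)=-\epsilon$ away from the corner and $\mathrm{Re}(z_1)+\mathrm{Re}(z_2)=-3\epsilon$ near it, with a smooth transition transverse to $Z$) that is simultaneously a truncation of $\widehat{U_{m,m}}$ and of $\Sym^2(\widehat{\Sigma_m})$, and then uses the general principle of Section~2.3 that two Liouville sectors with a common truncation are Liouville isomorphic. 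This common-truncation argument is exactly the tool that discharges the corner bookkeeping you correctly flag at the end of your sketch; without it, the claim that ``the two completions agree'' still lacks a proof. So: right strategy, right ingredients, but the two decisive steps are resolved in the paper by devices (Corollary~\ref{sym2}; truncations as in Proposition~\ref{sym}) that your proposal identifies as obstacles but does not supply.
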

Furthermore, we can apply the decomposition theorem (Theorem A) to homological mirror symmetry. Namely, a sectorial decomposition of $\Sym^2(\Sigma)$ opens the perspective of constructing a mirror space of the symmetric product by finding mirrors to the various Liouville sectors into which it can be decomposed and gluing them to each other in a suitable manner.

Ganatra, Pardon, and Shende \cite{MR4106794, MR4695507} have introduced and studied  wrapped Fukaya categories of Liouville sectors. The upshot of their work is that the wrapped Fukaya category of a Liouville manifold can be determined from a decomposition into Liouville sectors by a local-to-global principle.

In section $7$, we discuss the example of the symmetric product of the $4$-punctured sphere $\Sym^2(\mathbb{P}^1 - 4 \mbox{pts})$, which is also known as the \emph{complex $2$-dimensional pair of pants}. Lekili and Polishchuk \cite{MR4120165} have established homological mirror symmetry for this example using a computation of Auslander algebras. A geometric interpretation of mirror symmetry can be deduced from our sectorial decomposition results. In this case, the local-to-global principle in  \cite{MR4695507} yields the top diamond of the commutative diagram shown in Figure \ref{marker},
which describes the wrapped Fukaya category of $\text{Sym}^2(\mathbb{P}^1 - 4\text{pts})$ as the pushout of a diagram of wrapped Fukaya
categories of simpler Liouville sectors. Homological Mirror Symmetry for the $2$-dimensional pair of pants follows by identifying the mirror of each Liouville sector and the gluing maps between them.

The rest of this paper is organized as follows.
Section $2$ reviews the basic theory of Liouville sectors and related notions such as sectorial domains, completions and truncations.
Section $3$ develops a local model on the symmetric square of the complex plane while section $4$ establishes the existence of quadratic Stein structures on surfaces with embedded arcs, providing the analytic framework for our main construction.
Section $5$ uses this structure to construct a global sectorial decomposition of the symmetric product of a surface.
Section $6$ studies the fibers of the sectorial boundaries and their completions, showing their Liouville equivalences.
Finally, Section $7$ applies these results to the symmetric product of the four-punctured sphere and proves homological mirror symmetry for this example.

\paragraph{Acknowledgement.}
I would like to thank my PhD advisor Denis Auroux for his guidance, support, and generosity, as well as for the many ideas he contributed to this work. I am also grateful to John Pardon for helpful conversations and suggestions. This work was partially supported by NSF grant DMS-2202984, by the Simons Foundation (grant \#$385573$, Simons Collaboration on Homological Mirror Symmetry), and by the Simons Collaboration on New Structures in Low-Dimensional Topology.

\section{Sector theory and more}

\subsection{Liouville Sectors} 
This section is mostly expository following Ganatra-Pardon-Shende \cite{MR4106794, MR4695507} and some classical concepts and results.

A \emph{symplectic manifold} is a pair $(X,\omega)$, where $X$ is an even dimensional real manifold and $\omega\in\Omega^2 (X)$ is a closed nondegenerate $2$-form on $X$. 

\begin{defn}
     A compact symplectic manifold $(X,\omega)$ with boundary $\partial X$ is a Liouville domain if $\omega$ is exact and there exists a vector field $Z$ on $X$ such that 
    \begin{enumerate}[(a)]
        \item $\mathcal{L}_Z\omega =\omega$, or equivalently, $Z$ is dual to a primitive form of $\omega$;
        \item  $Z$ is outward pointing along the boundary $\partial X$.  
    \end{enumerate}
	We call $Z$ a \emph{Liouville vector field} and $(\omega ,Z)$ a \emph{Liouville structure} on $X$. 
\end{defn}

\begin{defn}
    A symplectic manifold $(X,\omega)$ is a  \emph{Liouville manifold} if $\omega$ is exact and there exists a complete Liouville vector field $Z$ on $X$ such that it is cylindrical and convex at infinity.
\end{defn}

\begin{defn}
    Let $(X_0,\omega)$ be a Liouville domain, one can always construct a Liouville manifold $X$ given by $$
    X =X_0\cup_{\partial X_0}\partial X_0\times [1,+\infty),
$$
where the Liouville structure on $\partial X_0\times [1,+\infty)_r$ is $\left(\omega =d(r\lambda\vert_{\partial X_0}),Z=r\partial_r\right)$.
We call $X$ the completion of Liouville domain $(X_0,\omega)$ 
\end{defn}

\begin{defn}
Let $(X,\omega, Z)$ be a Liouville manifold. If there exists an exhausting Morse function $\varphi\colon X\to\mathbb{R}$ that is Lyapunov (i.e. $Z$ is gradient-like for $\varphi$) for $Z$, we then call $X$ a  Weinstein manifold. The triple  $(\omega ,Z,\varphi)$ is called a \emph{Weinstein structure} on $X$.
\end{defn} 

\begin{defn}
 A \emph{Stein manifold} is a complex manifold $(X,J)$ which admits a proper holomorphic embedding $i\colon X\to\mathbb{C}^N$ for some $N$.
\end{defn}


\begin{defn}[Ganatra-Pardon-Shende]
Let $ X $ be a Liouville manifold with boundaries, i.e., $ (X, \lambda) $ is an exact symplectic manifold with boundaries, with infinite ends modeled on the symplectization of a contact manifold with boundary.

We say $ X $ is a \textit{Liouville sector} iff there exists a function $I : \partial X \rightarrow \mathbb{R}$ such that it satisfies the following conditions.

\begin{itemize}
  \item $I$ is \textit{linear at infinity}, meaning $ZI = \alpha I$ outside a compact set for some constant $\alpha >0$, where $Z$ denotes the Liouville vector field.
  \item $dI|_{\text{char.fol.}} > 0$, where the characteristic foliation $C$ of $\partial X$ is oriented so that $\omega(N, C) > 0$ for any inward pointing vector $N$.
\end{itemize}
\end{defn}

The boundary of a Liouville sector is identified with $F\times \bR$, where $F:=I^{-1}(0)$ is called the fiber.

On any Liouville sector $X$, there is a canonical (up to contractible choice) symplectic fibration $\pi : X \rightarrow \mathbb{C}_{\text{Re}\geq 0}$ defined near $\partial X$. For almost complex structures on $X$ making $\pi$ holomorphic, the projection $\pi$ imposes strong control on holomorphic curves near $\partial X$.

See \cite{MR4106794} section~$2.6$ for a description of the local model near the boundary~$\partial X$ of~$X$, and for the discussion of the family $Z_\CC^\alpha = (1-\alpha)x\partial_x + \alpha y\partial_y$ of Liouville vector fields on~$\CC$ and its associated Liouville forms~$\lambda_\CC^\alpha$. In particular, the case $\alpha = \tfrac12$ gives the radial Liouville structure, under which $J_\CC$ is invariant.

\begin{defn}
    A saddle sector is a Liouville sector with $\alpha>1$.
\end{defn}
The name of saddle sector comes from the dynamics of the Liouville flow in the $(R,I)$ coordinates, which has a saddle point at the origin where $R=I=0$.

Here are some examples of Liouville sectors.

\begin{example}
\begin{enumerate}
\item The cotangent bundle $T^*Q$ for any compact manifold-with-boundary $Q$ is a Liouville sector.  
\item Any Liouville manifold is an example of Liouville sector.
\item A punctured bordered Riemann surface $S$ is a Liouville sector if and only if every component of $\partial S$ is homeomorphic to $\mathbb{R}$, that is none of the boundary components is homeomorphic to $S^1$.
\item
If $ X $ is a Liouville sector, $ \Lambda \subset \partial_{\infty} X $ closed Legendrian, then $ X - \mathcal{N}_\epsilon (\Lambda) $ is a Liouville sector.
\item In section $4$ definition \ref{D_1minus}, we will define and study an explicit Liouville sector structure on $D_1^-:=\{ z \in \mathbb{C}| \text{Re}(z) \leq 1\}$.
\end{enumerate}
  
\end{example}

\begin{defn}[Sectorial covering]\label{sector_covers}
Let $X$ be a Liouville manifold-with-boundary. 
A collection of cylindrical hypersurfaces $H_1,\cdots,H_n\subseteq X$ is called \emph{sectorial}
iff their characteristic foliations are $\omega$-orthogonal over their intersections and there exist functions
$I_i:Nbd^ZH_i\to\mathbb{R}$ which is linear near infinity
satisfying
\begin{enumerate}
    \item $dI_i|_{char.fol.(H_i)}\ne 0$ (equivalently, $X_{I_i}\notin TH_i$), 
    \item  $dI_i|_{char.fol.(H_j)}=0$ (equivalently, $X_{I_i}\in TH_j$), 
    \item  and $\{I_i,I_j\}=0$.
\end{enumerate}
Let $X$ be a Liouville manifold or sector and $X_1,\ldots,X_n$ in $X$ be a covering of a $X$ by finitely many Liouville sectors. This covering is called sectorial iff the collection of their boundaries $\partial X,\partial X_1,\cdots,\partial X_n$ is sectorial. Here we understand $\partial X_i$ as the part of the boundary of $X_i$ not coming from $\partial X$, i.e. the boundary in the point-set topological sense.
\end{defn}

\begin{remark} If we have more than one hypersurface which intersect each other, we can define sectors with corners and the local model as follows. Let $(M, \lambda)$ be a Liouville manifold, and suppose that we have a collection of Liouville hypersurfaces 
$
H_1, H_2, \dots, H_k \subset M
$
which intersect transversely. For each $i$, we write
$
H_i \cong F_i \times \mathbb{R},
$
where $F_i$ denotes the Liouville fiber of $H_i$. When two hypersurfaces intersect, say $H_i$ and $H_j$, we denote their intersection by
$
C_{i,j} := H_i \cap H_j \cong (F_i \cap F_j) \times \mathbb{R}^2.
$. 

In dimension $4$, there is a simple local model for the neighborhood of any corner using coordinates $z_1=R_1+iI_1$ and $z_2=R_2+iI_2$.
The sector with corners is $\{ R_1 \leq 0\} \times \{ R_2 \leq 0\}$. The fibers are $F_1 = \{ I_1=0\} \cap H_1 \cong \{ z_1= 0\} \times \{ R_2 \leq 0\} $ and $F_2  \cong \{ I_2=0\} \cap H_2\cong \{ R_1 \leq 0\} \times  \{z_2=  0\}$
and they intersect at $I_1=I_2=0$.

\end{remark}


\subsection{Completions}

In this section, we want to introduce three kinds of completions and show that two of them are equivalent. These constructions are three different ways (all equivalent up to deformation) of turning a saddle sector into a convex Liouville manifold. 

Let $X$ be a saddle sector with the saddle boundary $S$, we will define three kinds of completions, i.e. Liouville completion, convex completion, and direct convex completion. We call these three completions $X_1$, $X_2$, and $X_3$.

\begin{enumerate}
    \item \textbf{Liouville Completion: gluing a Minimal Sector to get $X_1$}: \\We glue a minimal sector, specifically, the product of the fiber with a model saddle sector $D_1^- = \{ z \in \mathbb{C}| Re(z) \leq 1\}$ (see Definition \ref{D_1minus}), to $X$ along the boundary of the saddle point. This construction fills in the boundary near the saddle and yields a completed manifold denoted by $X_1$. This process can be viewed as a natural extension of $X$ using a minimal Liouville sector.

    \item \textbf{Convex Completion: deleting a tubular neighborhood and performing convex completion to get $X_2$}: \\We first remove a tubular neighborhood $N(S)$ around the boundary $S$ of $X$, where $S$ denotes the sector boundary associated with the saddle point. The remaining manifold $X_{-}:=X \setminus N(S)$ is then completed to make the boundary convex. This is constructed as $X_{-} \cup ([0, \infty) \times \partial X_{-})$, where $\partial X_{-}$ is a convex type hypersurface. The completed manifold is denoted by $X_2$. This method ensures that the boundary satisfies the convexity conditions required for a Liouville domain.

    \item \textbf{Direct Convex Completion to get $X_3$}: By directly removing the sector boundary $S$ of $X$, we obtain a complete Liouville manifold. This approach avoids the intermediate step of removing a tubular neighborhood and results in a manifold which is complete and convex at infinity. The completed manifold is denoted by $X_3$.
\end{enumerate}


The relationships among these completions are as follows.
\begin{proposition}
Let $X_1$, $X_2$, and $X_3$ denote the three completions of a saddle sector $X$. The relationships among these completions are as follows.
\begin{enumerate}
    \item The completion $X_1$ is Liouville deformation equivalent to $X_2$. That is, there exists a smooth deformation of the Liouville structure of $X_1$ that yields $X_2$.
    \item The completion $X_3$ is Liouville deformation equivalent to both $X_1$ and $X_2$. 
\end{enumerate}
\end{proposition}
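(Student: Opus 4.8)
The plan is to reduce all three equivalences to a single local statement near the saddle boundary $S$, together with the standard principle that a Liouville manifold of finite type is determined up to Liouville deformation equivalence by the deformation class of any Liouville domain whose forward Liouville flow sweeps it out. Fix once and for all a tubular neighborhood $N(S)$ of the saddle boundary on which the Liouville structure is the product model of Section~2: a fiber $F$ times a neighborhood of the saddle point of $Z_\CC^\alpha$ in $\CC$ with $\alpha>1$. Put $X_-:=X\setminus N(S)$; this is a compact Liouville domain with convex boundary $\partial X_-$, and it sits as the \emph{same} subdomain inside each of $X_1$, $X_2$, $X_3$ (inside $X_1$ because $X_1\supset X$; inside $X_3$ because $X_-$ is disjoint from the deleted locus $S$). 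By construction $X_2 = X_-\cup(\partial X_-\times[0,\infty))$ is exactly the completion $\widehat{X_-}$. It therefore suffices to prove the following local statement: the two \emph{completing pieces} $E_1:=N(S)\cup_S(F\times D_1^-)$ and $E_3:=N(S)\setminus S$, each with its induced Liouville structure and glued to $X_-$ along $\partial X_-$, are Liouville deformation equivalent rel $\partial X_-$ to the cylindrical end $\partial X_-\times[0,\infty)$. Granting this, attaching deformation-equivalent ends to the fixed domain $X_-$ gives $X_1\simeq X_2$ and $X_3\simeq X_2$, whence $X_3\simeq X_1$ by transitivity, which is precisely the proposition.

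To prove the local statement I would work entirely in the product model $F\times(\text{half-plane})$ with Liouville field $Z_F+Z_\CC^\alpha$, and in each case produce an explicit exhaustion of the completing piece by hypersurfaces that are convex (transverse and outward-pointing) for this Liouville field, agreeing with $\partial X_-$ near the gluing region and becoming cylindrical at infinity; deforming the Liouville form across this family, with support away from $\partial X_-$, then identifies the piece with $\partial X_-\times[0,\infty)$ up to deformation. For $E_1$ one caps the model with the minimal sector $F\times D_1^-$ and checks that it straightens into a collar; for $E_3$ one deletes $S$ and uses that, since $\alpha>1$, the Liouville flow on the punctured collar is complete and expands away from the deleted locus, reaching it only asymptotically, so that $E_3$ is genuinely complete and convex at infinity. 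The hypothesis $\alpha>1$ is essential here: for $\alpha\le1$ deleting the boundary would not yield a complete manifold and the analogue of $X_3$ would fail.

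The main obstacle is exactly this local analysis. In the presence of the saddle point of $Z_\CC^\alpha$ one must (i) choose the convex hypersurface families with care, since level sets of the obvious radial-type functions are no longer transverse to the Liouville field near the stable manifold of the saddle; (ii) verify that the regions these hypersurfaces cut off are genuinely cylindrical, with contact structure at infinity matching that of $\partial X_-$, so that the completions literally coincide there; and (iii) handle any zeros of $Z_F+Z_\CC^\alpha$ that lie in a completing piece (for instance by absorbing them into an enlargement of $X_-$ within its deformation class), so that the interpolation to a zero-free cylindrical end is legitimate. Once these points are settled, the bookkeeping with the fixed domain $X_-$ and transitivity of Liouville deformation equivalence finish the proof.
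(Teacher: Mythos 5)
The paper states this proposition in Section~2.2 without a proof, so there is no argument to compare against; I can only evaluate your proposal on its own terms. Your reduction --- fix $X_-:=X\setminus N(S)$ and compare the ``completing pieces'' $E_1$, $E_3$ glued onto $\partial X_-$ against the collar $\partial X_-\times[0,\infty)$ --- is a sensible frame, and you have correctly located where the work is. The difficulty is that the entire content of the proposition is hidden in your point (iii), and the parenthetical fix you offer there is circular. Concretely: $D_1^-$ carries the quadratic Stein potential of Proposition~\ref{global}, which has a genuine minimum at $z=0$ in the \emph{interior} of $D_1^-$. Thus $F\times D_1^-$ contributes, for each zero $p$ of $Z_F$, a new zero of $Z$ at $(p,0)$ of index $\mathrm{ind}(p)$, paired with the zero at $(p,1)\in S$ of index $\mathrm{ind}(p)+1$. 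These new critical points lie inside $E_1$, so $E_1$ is \emph{not} a zero-free cobordism and cannot be straightened to a collar by any deformation supported away from the critical locus; one must actually cancel each pair $\{(p,0),(p,1)\}$ along the unique Liouville trajectory in $D_1^-$ joining them (a Weinstein handle cancelation). Writing ``absorbing them into an enlargement of $X_-$ within its deformation class'' assumes exactly what needs proving: the enlarged domain has strictly more handles, and the assertion that it is nonetheless in the same Liouville deformation class as $X_-$ \emph{is} the cancelation statement. Relatedly, the opening claim that $X_-$ is a common Liouville subdomain ``whose forward Liouville flow sweeps out'' each $X_i$ is false as stated for $X_1$: since $Z$ is tangent to $S$, the forward flow from $X_-$ never crosses $S$ into $F\times D_1^-$, so $X_-$ does not exhaust $X_1$ before the deformation is carried out.

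On the $E_3$ side, the assertion that deleting $S$ leaves a structure that is ``genuinely complete and convex at infinity'' also needs an argument, and the phrase ``expands away from the deleted locus'' is backwards: with $\alpha>1$ and $R<0$ one has $ZR=(1-\alpha)R>0$, so the forward flow \emph{contracts toward} $R=0$ with speed tending to $0$. Completeness holds because $R=0$ is reached only asymptotically, but convexity at infinity (and matching the contact structure to that of $\partial X_-$ so the completions agree there) requires a reparametrization such as $r=(-R)^{-1/(\alpha-1)}$, in which $Zr=r$, together with a check that the remaining $F$- and $I$-directions yield a contact-type hypersurface compatible with $\partial X_-$. These gaps are fillable, but as written the proposal does not prove the proposition: the handle cancelation for $E_1$ is the essential missing step, and the $E_3$ analysis needs to be carried out rather than asserted.
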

\begin{remark}
Although $X_3$ is Liouville equivalent to $X_2$, it is not Stein isomorphic to $X_2$ as $X_3$ doesn't have a natural Stein structure. This distinction arises because modifying the complex structure potential is necessary to equip $X_3$ with a Stein structure.    
\end{remark}

\subsection{Sector domains and truncations}

We aim to define an operation called \textit{truncation}, which involves creating a subdomain with a new convex boundary such that the negative Liouville flow carries every point into this subdomain.

\begin{defn}[Sector Domain]\label{sectordomain}
We define a sector domain $D$ to be a Liouville manifold (possibly non-compact) with boundaries (with possibly corners) such that one part of the boundary $\partial_C(D)$ is convex (contact-type) and the other part of the boundary $\partial_S(D)$ is a sectorial boundary.
\end{defn}

\begin{remark}
    We can think of a sector domain as shrinking a Liouville sector along the negative Liouville vector field, and the converse operation is Liouville completion along the contact-type boundary of the sector domain. 
\end{remark}

\begin{defn}
For a Liouville sector \( X \), we say that a sector domain \( D \subset X \) is a \emph{truncation} if for each \( x \in X \), there exists \( t_x  >0 \) such that
\[
\varphi^{-t_x}(x) \in D,
\]
and at every point of $\partial_S  X - \partial_S D$, the Liouville vector field $Z$ is tangent to $\partial_S X$ and $ZI=\alpha I$.

\end{defn}

\begin{defn}
The \emph{completion} of a sectorial domain \( D \) is a Liouville sector
\[
\widehat{D} := \mathsf{C}(D) := D \cup_{\partial_C D} (\partial_C D \times [1, \infty)).
\]
\end{defn}

\begin{proposition}
The completion \( \widehat{D} \) of a truncation \( D \subset X \) is symplectomorphic to \( X \) as a Liouville sector
\[
\mathsf{C}(D) \cong X.
\]
\end{proposition}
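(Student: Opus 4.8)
The plan is to produce the symplectomorphism directly from the Liouville flow $\varphi^t$ of $X$, using the truncation hypothesis to invert it. Write $\partial_C D$ for the contact-type part of $\partial D$ and $\lambda_0 := \lambda_X|_{\partial_C D}$, and define
$\Phi \colon \widehat D = D \cup_{\partial_C D}\big(\partial_C D\times[1,\infty)\big)\longrightarrow X$
by $\Phi|_D = \mathrm{incl}$ and $\Phi(p,r) = \varphi^{\ln r}(p)$ on the cylindrical end. At $r=1$ this is $\varphi^0=\mathrm{id}$, so $\Phi$ matches the inclusion along the gluing locus, and smoothness across $\partial_C D\times\{1\}$ is the usual collar picture: contact-type-ness of $\partial_C D$ gives a neighborhood in $D$ of the form $\partial_C D\times(1-\epsilon,1]$ on which $\lambda_X = r\lambda_0$ and $Z=r\partial_r$, which is literally the restriction of $\widehat D$'s cylindrical end.

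First I would check that $\Phi$ is a diffeomorphism onto $X$. The device is the exit-time function $\tau\colon X\to[0,\infty)$, $\tau(x):=\inf\{t\ge 0: \varphi^{-t}(x)\in D\}$, which is finite and attained for every $x$ because $D$ is a truncation. For $x\notin D$ one sees $\varphi^{-\tau(x)}(x)\in\partial_C D$: by uniqueness of integral curves an interior backward trajectory stays interior, and a trajectory on $\partial_S X$ stays on $\partial_S X$ (here one uses that $Z$ is tangent to $\partial_S X$ along $\partial_S X-\partial_S D$), so in either case the trajectory can only enter $D$ through $\partial_C D$, and once it crosses this convex hypersurface inward it never re-exits, since $Z$ points outward there. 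Transversality of $Z$ to $\partial_C D$ and the implicit function theorem make $\tau$ smooth away from $D$, and $\Psi(x)=x$ for $\tau(x)=0$, $\Psi(x)=(\varphi^{-\tau(x)}(x),e^{\tau(x)})$ otherwise, is a smooth two-sided inverse of $\Phi$.

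That $\Phi$ is an exact symplectomorphism follows from $\iota_Z\lambda_X=\lambda_X(Z)=\omega(Z,Z)=0$, which gives $\mathcal L_Z\lambda_X=\lambda_X$ and hence $(\varphi^t)^*\lambda_X=e^t\lambda_X$ exactly. Identifying $\bigcup_{t\ge 0}\varphi^t(\partial_C D)\subset X$ with $\partial_C D\times[0,\infty)_t$ via the flow, $\lambda_X$ pulls back to $e^t\lambda_0$; after the substitution $r=e^t$ this is precisely the Liouville form $r\lambda_0$ on $\widehat D$'s cylindrical end, and since $\Phi|_D$ is the identity we obtain $\Phi^*\lambda_X=\lambda_{\widehat D}$ on all of $\widehat D$.

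Finally I would check that $\Phi$ intertwines the sector structures, which is where I expect the actual work to lie. The sectorial boundary of $\widehat D$ is $\partial_S D$ completed along $\partial_S D\cap\partial_C D$; under $\Phi$, $\partial_S D$ maps in by inclusion while the flowout $\varphi^{\ge 0}(\partial_S D\cap\partial_C D)$ stays in $\partial_S X$ (tangency of $Z$) and exhausts $\partial_S X-\partial_S D$ (apply the truncation property to points of $\partial_S X$), so $\Phi(\partial_S\widehat D)=\partial_S X$. For the defining function, take $I_{\widehat D}$ equal to $I_X$ on $\partial_S D$ and equal to its flow-homogeneous extension $r^{\alpha}I_X(p)$ on the end; since $ZI_X=\alpha I_X$ on $\partial_S X-\partial_S D$ we have $I_X(\varphi^t(p))=e^{\alpha t}I_X(p)$, so $\Phi^*I_X=I_{\widehat D}$ there as well, and the remaining sector axioms ($I$ linear at infinity, $dI|_{\mathrm{char.fol.}}>0$, and $\omega$-orthogonality of characteristic foliations at corners) transfer automatically because $\Phi$ is an exact symplectomorphism carrying one boundary stratification to the other. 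The main obstacle is the bookkeeping at the corner $\partial_C D\cap\partial_S D$: one must choose the collar of $\partial_C D$, the flowout of $\partial_S D$, and the local sector coordinates $(R,I)$ near $\partial_S X$ compatibly, so that no integral curve escapes through an unexpected face and the two completions literally coincide on a neighborhood of the corner.
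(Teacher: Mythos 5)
The paper states this proposition without proof, so there is no reference argument to compare against; it is treated as a consequence of the preceding remark that completion inverts truncation. Your proof supplies the standard argument: identify the flowout $\bigcup_{t\geq 0}\varphi^t(\partial_C D)$ with the cylindrical end $\partial_C D\times[1,\infty)$ via $(p,r)\mapsto\varphi^{\ln r}(p)$, invert with the exit-time function $\tau$, transport the Liouville form using $(\varphi^t)^*\lambda = e^t\lambda$, and extend $I$ homogeneously using $ZI=\alpha I$ on $\partial_S X - \partial_S D$. This is essentially correct. Two remarks. First, the case distinction in your definition of $\Psi$ is unnecessary: in the collar coordinates $(p,r)$ near $\partial_C D$ furnished by the contact-type condition, $\Psi$ is literally the identity, so smoothness across $\partial_C D$ requires no separate argument (and note that $\tau$ itself is only $C^0$ across $\partial_C D$ — it is $\Psi$, not $\tau$, that is smooth). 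Second, the corner bookkeeping you flag at the end is the one substantive thing left to verify: one needs $D\cap\partial_S X=\partial_S D$ and that the forward flowout of the corner stratum $\partial_C D\cap\partial_S D$ remains in $\partial_S X$ (which follows from the tangency of $Z$ on $\partial_S X-\partial_S D$ together with continuity at the corner) in order for $\Phi$ to carry $\partial_S\widehat D$ onto $\partial_S X$ and for the local model near the codimension-two stratum to line up. This does go through along the lines you sketch.
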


\section{The local model} 
Let $\varphi$ be a plurisubharmonic Morse function on $\mathbb{C}$. Let $\omega = dd^c \varphi$ and the Liouville form be $d^c \varphi$. There exists a Riemannian metric $g_\varphi$ determined by $\omega$ and the almost complex structure. We then get a Liouville vector field $Z= \nabla {g_\varphi} \varphi$.

Let us use coordinate $z=x+iy$ and assume $$\varphi (z) = \frac{1}{4} \| z\|^2 + \frac{1-2\alpha}{4} \mbox{ Re}(z^2)=\frac{1-\alpha}{2} x^2+ \frac{\alpha}{2}y^2,$$ where $\alpha >1$. This Morse function has only one index $1$ critical point at $0$. 

For the computation simplicity, let us choose $\alpha=\frac{3}{2}$, we then have $$\varphi(z)= -\frac{1}{4}x^2 + \frac{3}{4}y^2.$$

 Sym$^2(\mathbb{C}) \cong \mathbb{C}^2$, we can change the coordinate to $z=\frac{1}{2}(z_1+z_2)$ and $w= (\frac{z_1-z_2}{2})^2.$

Then by the Taylor expansion $\varphi(z_1)+\varphi(z_2) = 2 \varphi(z) + 2\varphi(\sqrt{w})$.

We define a  function $\Phi$ on Sym$^2(\mathbb{C})$  $$\Phi(z, w) := 2 \varphi(z) + 2\varphi(\sqrt{w})= \frac{1}{2} |z|^2- \mbox{Re}(z^2) + \frac{1}{2} |w| -  \mbox{Re}(w).$$
We notice that the function $\Phi$ we defined above has a cone singularity at $0$ for $|w|$, we regularize the singularity by a compactly supported perturbation to get a smooth function $\widetilde{\Phi}$ on Sym$^2(\mathbb{C})$.
$$\widetilde {\Phi}(z, w) := 2 \varphi(z) + 2\widetilde{\varphi}(\sqrt{w}):= \frac{1}{2} |z|^2- \mbox{Re}(z^2) + \frac{1}{2} \widetilde{|w|} -  \mbox{Re}(w).$$

The function $\widetilde {\Phi}(z, w)$ doesn't have any critical points.

For each of the coordinates $z, z_1, z_2$, and $\sqrt{w}$, we have the gradient flow of $\Phi$ as follows.
\begin{equation}
    \Phi_t(x,y)= (e^\frac{t}{2}x, e^{-\frac{3t}{2}}y) \label{exponential flow}
\end{equation}
Re($z$) goes to $-\infty$ or $ \infty$ unless Re$(z)=0$, and $z \to 0$ when Re$(z)=0$. For the regularized function $\widetilde{\Phi}$, the answer is similar except in the neighborhood of the diagonal $w=0$.
The gradient of $\widetilde{\Phi}$ is a decoupled flow of $z_1$ and $z_2$ when it is away from the diagonal (or $z$ and $\sqrt{w}$ always), this changes only near $w=0$ and where we smooth $|w|$. 

\begin{lemma}
For a suitable choice of smoothing, under the gradient flow of $\widetilde{\Phi}$, Re$(w)$ goes to $+\infty$ and Im$(w)$ goes to $0$, as $t$ goes to $\infty$. \label{escaping lemma}
\end{lemma}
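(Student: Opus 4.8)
The plan is to peel off the $z$–direction using the product structure of the local model, reduce the statement to a one–variable gradient flow on the $w$–plane, fix the smoothing so that $\operatorname{Re}(w)$ becomes monotone along the flow, and then finish with a soft dynamical argument. Concretely, in the holomorphic coordinates $(z,w)$ on $\Sym^2(\mathbb{C})\cong\mathbb{C}^2$ the potential splits, $\widetilde{\Phi}(z,w)=f(z)+g(w)$ with $f(z)=2\varphi(z)$ and $g(w)=\tfrac12\widetilde{|w|}-\operatorname{Re}(w)$, so the Kähler form $dd^c\widetilde{\Phi}=dd^c_zf+dd^c_wg$ has no mixed $z$–$w$ terms and $g_{\widetilde{\Phi}}$ is block diagonal in this splitting. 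Hence the gradient flow of $\widetilde{\Phi}$ is the product of the $z$–flow \eqref{exponential flow} and the gradient flow of $g$ on $\mathbb{C}_w$ for the metric $dd^c_wg(\cdot,J\cdot)$, and it suffices to prove that this $w$–flow drives $\operatorname{Re}(w)\to+\infty$ and $\operatorname{Im}(w)\to0$.

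\textbf{The smoothing and the flow off the diagonal.} Next I would choose $\widetilde{|w|}=h(|w|)$ with $h\colon[0,\infty)\to(0,\infty)$ smooth, even, convex, $1$–Lipschitz, equal to $r$ on $[\epsilon,\infty)$ and with $h(0)>0$; one checks such $h$ exists. Convexity makes $g$ strictly plurisubharmonic, and $|h'|\le1$ forces $\partial_{\operatorname{Re}(w)}g\le-\tfrac12<0$ everywhere; thus $g$ has no critical points (matching the paper's claim about $\widetilde{\Phi}$) and, crucially, $\operatorname{Re}(w(t))$ is strictly increasing along the downward gradient flow. Away from the diagonal disk $\{|w|\le\epsilon\}$, where $g=\tfrac12|w|-\operatorname{Re}(w)$ and the Kähler form is $\tfrac12\,dd^c|w|$, a short computation identifies the gradient vector field of $g$ in the complex coordinate with $w\mapsto 2|w|-w$; equivalently, writing $w=(\tfrac{z_1-z_2}{2})^2=\zeta^2$, it is exactly the flow \eqref{exponential flow} in the $\zeta$–plane. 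In particular $\tfrac{d}{dt}\operatorname{Im}(w)=-\operatorname{Im}(w)$ there, so $|\operatorname{Im}(w(t))|$ is non-increasing outside the disk; since $|\operatorname{Im}(w)|\le\epsilon$ inside it, the trajectory stays in the horizontal strip $\{|\operatorname{Im}(w)|\le B\}$ with $B=\max(|\operatorname{Im}(w(0))|,\epsilon)$.

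\textbf{Escape, conclusion, and the main obstacle.} The $w$–flow is forward complete (the vector field has at most linear growth), and a gradient flow of a function without critical points admits no precompact forward orbit: its $\omega$–limit set would be an invariant set on which $g$ is constant, hence would contain a critical point. If $\operatorname{Re}(w(t))$ stayed bounded, then by the strip bound together with the monotonicity of $\operatorname{Re}(w)$ the orbit would be confined to a compact rectangle — a contradiction — so $\operatorname{Re}(w(t))\to+\infty$; choosing $t_0$ with $\operatorname{Re}(w(t_0))>\epsilon$, monotonicity keeps the orbit in $\{|w|>\epsilon\}$ for $t\ge t_0$, where $\tfrac{d}{dt}\operatorname{Im}(w)=-\operatorname{Im}(w)$ yields $\operatorname{Im}(w(t))=e^{-(t-t_0)}\operatorname{Im}(w(t_0))\to0$. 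The hard part — and exactly what ``for a suitable choice of smoothing'' refers to — is arranging that $\partial_{\operatorname{Re}(w)}g<0$ persists after regularization (equivalently, that $\operatorname{Re}(w)$ stays monotone and $g$ stays critical-point-free) while keeping $g$ plurisubharmonic; a careless smoothing could create a pocket near $w=0$ in which the flow is uncontrolled, so the $1$–Lipschitz convex radial profile above, which makes everything else soft, is the step I expect to require the most care.
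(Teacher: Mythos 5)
Your argument is correct and rests on the same mechanism the paper uses: split the potential, observe that the right smoothing makes $\partial_{\operatorname{Re}(w)}\bigl(\tfrac12\widetilde{|w|}-\operatorname{Re}(w)\bigr)$ uniformly negative so $\operatorname{Re}(w)$ is monotone increasing, bound $\operatorname{Im}(w)$ using the exact exponential decay of $\operatorname{Im}(w)$ off the smoothing disk, and conclude escape. The difference is largely one of style: the paper commits to the explicit smoothing $\sqrt{x^2+y^2+\epsilon}$, computes the conformal factor relating $dd^c\widetilde\Phi$ to the Euclidean metric, and gets the quantitative bound $\dot x\ge\sqrt\epsilon>0$, from which $\operatorname{Re}(w)\to\infty$ is immediate (with $\operatorname{Im}(w)\to0$ left as ``similarly''). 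You instead keep the smoothing in a general class and close the escape step with a soft LaSalle-type argument (a critical-point-free gradient flow has no precompact forward orbit). That is a legitimate alternative, and in fact the paper's remark right after the proof records essentially your sufficient conditions (strict subharmonicity of $\widetilde{|w|}$ plus $\partial_x\bigl(\tfrac12\widetilde{|w|}-\operatorname{Re}(w)\bigr)<c<0$); your write-up can be read as a fleshed-out version of that remark. One small inaccuracy to fix: ``convexity makes $g$ strictly plurisubharmonic'' is not literally true --- an even, convex, $1$-Lipschitz profile $h$ with $h(0)>0$ could still have $h'\equiv0$ near $r=0$, which kills $dd^c g$ there. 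You need, say, $h''(0)>0$ and $h'(r)>0$ for $r>0$ (equivalently strict convexity on $[0,\epsilon)$), which is exactly what the paper's $\sqrt{r^2+\epsilon}$ provides and is easy to arrange in your setup; once stated, the rest of your argument goes through.
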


\begin{proof}
    If we set the smoothing for $w=x+iy$ to be $\sqrt{x^2+y^2+\epsilon}$ for some small $\epsilon>0$, then the negative gradient of $\frac{1}{2}\widetilde{|w|}-\mbox{Re}(w)$ for the standard metric are $\dot{x}=\frac{x}{\sqrt{x^2+y^2+\epsilon}}+2$ and $\dot{y}=-\frac{y}{\sqrt{x^2+y^2+\epsilon}}$. We notice that $\dot{x}$ is always larger than $1$ and $\dot{y}$ is always has the same sign as negative $y$.
    
Now we compute the negative gradient of $\frac{1}{2}\widetilde{|w|}-\mbox{Re}(w)$ for the K\"ahler metric defined by $dd^c (\widetilde{\Phi})$. 
$$dd^c(\widetilde{\Phi})= dd^c(\frac{1}{2}|z|^2)+dd^c(\frac{1}{2}\widetilde{|w|})= idzd \Bar{z} + \frac{i}{4}\frac{|w|^2+2\epsilon}{(|w|^2+\epsilon)^{3/2}}dwd\Bar{w}$$ 

    The gradient for this K\"ahler metric is $\frac{2(x^2+y^2+\epsilon)^\frac{3}{2}}{x^2+y^2+2\epsilon}$ times the gradient for the standard metric.
    Since $\frac{2(x^2+y^2+\epsilon)^\frac{3}{2}}{x^2+y^2+2\epsilon}$ is always larger than $\sqrt{\epsilon}$, $\Dot{x}$ is bounded below by $\sqrt{\epsilon}>0$, then $x$ goes to $+\infty$. And similarly, $y$ goes to $0$.

\end{proof}

\begin{remark}
The proof would work more generally if the smoothing satisfies the following two properties.

\begin{itemize}
    \item $\widetilde{|w|}$ is strictly subharmonic.
    \item $\frac{\partial}{\partial x}(\frac{1}{2}\widetilde{|w|}-\mbox{Re}(w))< c<0$ everywhere for some constant $c$.
\end{itemize}

\end{remark}

We then can conclude that the difference of $z_1$ and $z_2$ will goes to $+\infty$. The gradient flow faster through the origin and doesn't stop at the origin.

For sufficient large $t$, the gradient flow line will be away from the diagonal and is the same in the \eqref{exponential flow} with possibly different constants.

\begin{lemma}
\label{lemma2}
For any initial condition, there exists a function $s$ to make the following equation hold
\begin{equation}
    \mbox{Re}(z_i)= \mbox{Re}(z\pm \sqrt{w})=e^{\frac{t}{2}} x_0 \pm s(t) e^\frac{t}{2} = e^\frac{t}{2}(x_0\pm s(t)),
    \label{Re(z_i)}
\end{equation}where $s(t)$ is independent of $t$ and positive for $t$ sufficiently large enough, and similarly, we have $$ \mbox{Im}(z_i)=\mbox{Im}(z\pm \sqrt{w})= e^\frac{t}{2}(y_0\pm \sigma(t)),$$ where $\sigma(t)$ is independent of $t$ for $t \gg 0$.
\end{lemma}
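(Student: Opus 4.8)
The plan is to exploit the decoupling built into the coordinates $z=\tfrac12(z_1+z_2)$ and $\sqrt w=\tfrac12(z_1-z_2)$: the potential is $2\varphi(z)+2\widetilde\varphi(\sqrt w)$ and, as in the computation in the proof of Lemma~\ref{escaping lemma}, the Kähler metric $dd^c\widetilde\Phi$ is block diagonal with standard $z$-part, so the gradient flow of $\widetilde\Phi$ splits into a $z$-flow and a $\sqrt w$-flow. The $z$-flow is globally the exponential flow \eqref{exponential flow}, hence $\mathrm{Re}(z(t))=e^{t/2}x_0$ and $\mathrm{Im}(z(t))=e^{-3t/2}y_0$ exactly, with $x_0,y_0$ the real and imaginary parts of $z$ at $t=0$. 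The only feature not yet of the desired shape is the behaviour of the $\sqrt w$-flow near the diagonal $\{w=0\}$, where $\widetilde{|w|}\neq|w|$; so the task reduces to showing that this flow becomes exponential after finite time and then translating the conclusion back into the $z_1,z_2$ variables.

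Fix an initial condition. By Lemma~\ref{escaping lemma} one has $\mathrm{Re}(w)\to+\infty$ and $\mathrm{Im}(w)\to 0$ along the trajectory, and the proof of that lemma gives $\tfrac{d}{dt}\mathrm{Re}(w)$ bounded below by a positive constant; hence $\mathrm{Re}(w)$ increases monotonically and the trajectory leaves, and never re-enters, the region near the diagonal where the smoothing is active. Thus there is a time $T$ beyond which the flow coincides with the genuine gradient flow of $2\varphi(z)+2\varphi(\sqrt w)$, which in the $z_1,z_2$ coordinates is a pair of decoupled copies of \eqref{exponential flow} — precisely the assertion recorded in the discussion preceding the statement. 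Consequently, for $t\ge T$,
\[
\mathrm{Re}(z_j(t))=a_j\,e^{t/2},\qquad \mathrm{Im}(z_j(t))=b_j\,e^{-3t/2}\qquad(j=1,2),
\]
where $a_j=e^{-T/2}\mathrm{Re}(z_j(T))$ and $b_j=e^{3T/2}\mathrm{Im}(z_j(T))$ are constants.

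Now pass back via $z=\tfrac12(z_1+z_2)$, $\sqrt w=\tfrac12(z_1-z_2)$, choosing the branch of the square root — equivalently, the labelling of the unordered pair $\{z_1,z_2\}$ — so that $\mathrm{Re}(\sqrt w)>0$ for $t$ large. This is legitimate: $\mathrm{Re}(w)=(\mathrm{Re}\sqrt w)^2-(\mathrm{Im}\sqrt w)^2\to+\infty$ while $\mathrm{Im}\sqrt w=\tfrac12(b_1-b_2)e^{-3t/2}\to 0$, so $|\mathrm{Re}\sqrt w|\to\infty$ (in particular $a_1\neq a_2$), and since $\arg w\to 0$ the principal square root satisfies $\mathrm{Re}\sqrt w\to+\infty$. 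Define $s(t):=e^{-t/2}\mathrm{Re}(\sqrt w(t))$ and $\sigma(t)$ analogously from $\mathrm{Im}(\sqrt w(t))$; then \eqref{Re(z_i)} holds by construction for every $t$ and every initial condition, with $x_0=\tfrac12(a_1+a_2)$, and for $t\ge T$ one has $s(t)=\tfrac12(a_1-a_2)$, a $t$-independent positive constant, while the identical bookkeeping on imaginary parts makes $\sigma(t)$ eventually constant as well. For $t<T$ one simply reads $s(t),\sigma(t)$ off \eqref{Re(z_i)}; the content of the lemma is the stabilization for $t\gg 0$.

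The one genuinely substantive point is the second paragraph: one must know that the trajectory escapes the smoothing region after a \emph{finite} time and stays out, so that the flow becomes exactly the decoupled exponential flow \eqref{exponential flow} rather than merely asymptotic to it. This is exactly what the uniform positive lower bound on $\tfrac{d}{dt}\mathrm{Re}(w)$ coming from the proof of Lemma~\ref{escaping lemma} supplies, and it is also what makes the conclusion hold \emph{for any} initial condition, including points on the diagonal. The remaining steps — the coordinate change and the branch/positivity bookkeeping — are routine once the escape direction $\mathrm{Re}(w)\to+\infty$, $\mathrm{Im}(w)\to 0$ is established.
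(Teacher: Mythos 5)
Your proof is correct and follows the same route the paper gestures at. The paper itself supplies no formal proof of this lemma — it is stated after the sentence ``For sufficient large $t$, the gradient flow line will be away from the diagonal and is the same in \eqref{exponential flow} with possibly different constants,'' which is exactly the assertion you make precise — and the substantive mechanism you invoke (monotone escape of $\mathrm{Re}(w)$ from the support of the smoothing in finite time, so that the flow then coincides with the decoupled exponential flow) is the same one the paper later uses in the proof of Lemma~\ref{funcc}. You have correctly isolated the two genuine ingredients: the block-diagonal form of the K\"ahler metric from $dd^c\widetilde\Phi$, which decouples the $z$- and $\sqrt{w}$-flows, and the uniform positive lower bound on $\tfrac{d}{dt}\mathrm{Re}(w)$ coming from the proof of Lemma~\ref{escaping lemma}, which forces a finite escape time $T$; after that, $s(t)=e^{-t/2}\mathrm{Re}(\sqrt{w}(t))$ and $\sigma(t)=e^{-t/2}\mathrm{Im}(\sqrt{w}(t))$ are constant, and the sign convention on $\sqrt{w}$ makes $s$ positive. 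One small wording caveat: ``leaves and never re-enters'' glosses over the possibility that a trajectory starting outside the support of the perturbation with $\mathrm{Re}(\sqrt{w_0})=0$ first enters the support (since then $w\to 0$ under the unperturbed flow) before escaping; your argument still goes through because what is actually needed — and what the monotonicity of $\mathrm{Re}(w)$ gives — is that once $\mathrm{Re}(w)$ exceeds the support radius it does so for all later times, so there is a last exit and the flow is exactly exponential after it.
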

For example, for the initial conditions $z_1,z_2 \in i\mathbb{R}$, (Re$(z_1)$,Re$(z_2)$) flows to $(-\infty, +\infty)$.

\begin{remark}
    In fact, we expect that this construction also works for higher symmetric products Sym$^n$ since the smoothing of singularity can be given by a convolution with a smooth function.
\end{remark}


\begin{defn}
       We now define a function $\Delta: \mathbb{C} \to \mathbb{C}_{\text{Re}> 0}$ given by the pull back along the flow $\Delta(\sqrt{w(0)})= s(t)+i\sigma(t)$ for any $t \gg 0$. Moreover, we define the function $c = \text{Re}(\Delta)$.
\end{defn}
 
 Let $\epsilon$ be the radius of the support of the perturbation $\widetilde{\Phi}$ of the 
 plurisubharmonic function $\Phi$. 
\begin{lemma}
\label{funcc}
    We have the following properties of the function $c: \mathbb{C} \to \mathbb{R}_{>0}$
    \begin{enumerate}
        \item The function $c$ is smooth 
        \item The function $c$ is even 
        \item We have $| Re| \leq c \leq \max({|Re|, \epsilon})$ always hold. In particular, $c= |Re|$ holds if $| Re| >\epsilon>0$.
    \end{enumerate}
\end{lemma}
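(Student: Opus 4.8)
\emph{Setup.} I would first translate the definition of $c$ into a statement about the gradient flow $\psi_t$ of $\widetilde\Phi$. It is cleanest to work in the $w$-coordinate on $\Sym^2(\CC)\cong\CC^2$, where $\psi_t$ is a genuine smooth flow and the branching of $\zeta:=\sqrt w$ only enters at the very end. Unwinding the definitions (using Lemma \ref{lemma2}) gives
\[
c(\sqrt{w_0})=\lim_{t\to\infty}e^{-t/2}\,\mathrm{Re}\,\sqrt{w(t)},\qquad w(t):=\psi_t(w_0),
\]
where $\sqrt{w(t)}$ is the branch with positive real part for $t\gg0$; this is legitimate because Lemma \ref{escaping lemma}, together with the analysis of the unperturbed flow preceding it, gives $\mathrm{Re}(w(t))\to+\infty$, so $w(t)$ eventually lies in the open right half-plane. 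Two structural facts will then do all the work. (i) $\mathrm{Re}(w(t))$ is \emph{strictly increasing} along every orbit: inside the perturbation region this is the inequality $\tfrac{d}{dt}\mathrm{Re}(w)>0$ established in the proof of Lemma \ref{escaping lemma}, and outside it $w=\zeta^2$ with $\zeta$ evolving by \eqref{exponential flow}, so $\mathrm{Re}(w)=(\mathrm{Re}\,\zeta)^2-(\mathrm{Im}\,\zeta)^2$ has positive derivative. (ii) Outside the perturbation region \eqref{exponential flow} shows $e^{-t/2}\sqrt{w(t)}$ has constant real part and imaginary part decaying like $e^{-2t}$; hence once an orbit has permanently left the perturbation region the limit defining $c$ is already attained, and in particular $\Delta$ is real and $c=\Delta$.

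\emph{Evenness and smoothness.} Evenness is immediate: $c(\sqrt{w_0})$ depends only on $w_0$ and on the sign-insensitive prescription ``positive real part''. For smoothness, I would use fact (i): since $\mathrm{Re}(w(t))$ is strictly increasing, with positive derivative, and tends to $+\infty$, each orbit meets the level set $\{\mathrm{Re}(w)=C\}$, for $C$ chosen large enough to lie outside the perturbation region, at a unique time $\tau(\zeta_0)$; by the implicit function theorem and smooth dependence of $\psi_t$ on $(t,w_0)$, $\tau$ is a smooth function of $\zeta_0$. By fact (ii), $c(\zeta_0)=e^{-\tau(\zeta_0)/2}\,\mathrm{Re}\,\sqrt{\psi_{\tau(\zeta_0)}(\zeta_0^2)}$ with the principal square root, which is smooth on the right half-plane, so $c$ is a composite of smooth maps.

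\emph{The bounds.} Write the perturbation region as (contained in) $\{|\zeta|\le\epsilon\}$, and let $t_1(\zeta_0)$ be the last time the orbit meets $\{|\zeta|\le\epsilon\}$ --- finite by fact (i), since once $\mathrm{Re}(w)$ exceeds $\epsilon^2$ the orbit has left and stays out --- with the convention $t_1=-\infty$ if the orbit never meets it; put $t_1^+=\max(t_1,0)$. For $t\ge t_1^+$ the orbit lies in the linear region, so fact (ii) gives $c(\zeta_0)=e^{-t_1^+/2}\,\mathrm{Re}\,\zeta(t_1^+)$, where $\mathrm{Re}\,\zeta(t_1^+)>0$ (if it vanished the orbit would re-enter $\{|\zeta|\le\epsilon\}$, contradicting maximality). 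If $t_1\le0$ then $c(\zeta_0)=|\mathrm{Re}\,\zeta_0|$; if $t_1>0$ then $|\zeta(t_1)|=\epsilon$, so $c(\zeta_0)=e^{-t_1/2}\mathrm{Re}\,\zeta(t_1)\le e^{-t_1/2}\epsilon\le\epsilon$. In both cases $c\le\max(|\mathrm{Re}\,\zeta_0|,\epsilon)$. For the lower bound $c\ge|\mathrm{Re}\,\zeta_0|$ the only case not already giving equality is $t_1>0$ (which forces $|\mathrm{Re}\,\zeta_0|\le\epsilon$). Here I would show that $\mathrm{Re}\,\sqrt{w(t)}$ grows at least at the unperturbed rate as the orbit crosses the diagonal, i.e. $\tfrac{d}{dt}\mathrm{Re}\,\sqrt{w(t)}\ge\tfrac12\mathrm{Re}\,\sqrt{w(t)}$ along the orbit --- equivalently $\tfrac{d}{dt}\big(|w(t)|+\mathrm{Re}(w(t))\big)\ge|w(t)|+\mathrm{Re}(w(t))$ --- which is an equality in the linear region and which, inside the perturbation region, I would derive from the chosen smoothing. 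A Gr\"onwall estimate then gives $\mathrm{Re}\,\zeta(t)\ge e^{t/2}|\mathrm{Re}\,\zeta_0|>0$ for $t\ge0$ (so the orbit stays in $\{\mathrm{Re}\,\zeta>0\}$ and no branch issue arises), and at $t=t_1$ this yields $c(\zeta_0)\ge|\mathrm{Re}\,\zeta_0|$. The ``in particular'' statement is the case $|\mathrm{Re}\,\zeta_0|>\epsilon$: then the orbit starts (and stays) outside $\{|\zeta|\le\epsilon\}$, so $c=|\mathrm{Re}\,\zeta_0|$ directly.

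\emph{Main obstacle.} Everything but the lower bound is formal once facts (i),(ii) are in place; the crux is the differential inequality in the perturbation region, i.e. controlling $\mathrm{Re}\,\sqrt w$ as an orbit passes through the diagonal. Morally the regularization can only help --- its gradient is dominated by the $-\mathrm{Re}(w)$ drift that drives orbits through the origin --- but turning this into the clean pointwise estimate above forces a convexity-type condition on the radial profile of $\widetilde{|w|}$ slightly beyond what Lemma \ref{escaping lemma} alone uses; arranging such a smoothing (or replacing the pointwise inequality by the weaker integrated statement $\mathrm{Re}\,\zeta(t_1)\ge e^{t_1/2}|\mathrm{Re}\,\zeta_0|$ and proving it directly) is the only real work.
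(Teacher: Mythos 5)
Your approach is essentially the same as the paper's for evenness, smoothness, and the upper bound: pull $c$ back along the gradient flow until the orbit reaches the region where the flow is linear, and read off the answer there. Your packaging of smoothness through the hitting time $\tau$ and the implicit function theorem is a slightly crisper version of the paper's argument, which instead fixes a single sufficiently large time $t$ uniformly over a compact set and writes $c(\sqrt{w_0}) = e^{-t/2}\,\mathrm{Re}\sqrt{w(t)}$ there; the substance is identical. Your upper-bound argument via the last exit time from $\{|\zeta|\le\epsilon\}$ likewise matches the paper's choice of $t_0$ with $\mathrm{Re}\sqrt{w_{t_0}}=\epsilon$ and the estimate $c\le e^{-t_0/2}\epsilon\le\epsilon$.

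Where your proposal goes beyond the paper is the lower bound $|\mathrm{Re}|\le c$. The paper's proof, as written, only establishes $c=|\mathrm{Re}|$ for $|\mathrm{Re}|>\epsilon$ and $c\le\max(|\mathrm{Re}|,\epsilon)$ in general; it never addresses the inequality $|\mathrm{Re}|\le c$. You are right to single this out as the genuine content: one must control $\mathrm{Re}\sqrt{w(t)}$ while the orbit crosses the perturbation region, and the differential inequality $\tfrac{d}{dt}\mathrm{Re}\sqrt{w}\ge\tfrac12\,\mathrm{Re}\sqrt{w}$ (equivalently $\tfrac{d}{dt}(|w|+\mathrm{Re}\,w)\ge |w|+\mathrm{Re}\,w$) plus Gr\"onwall that you sketch is the right mechanism. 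As you note, this imposes a condition on the smoothing $\widetilde{|w|}$ that is compatible with, but slightly stronger than, what the remark after Lemma \ref{escaping lemma} requires; for the specific smoothing $\sqrt{x^2+y^2+\epsilon}$ one would need to verify it directly. Since the lower bound is exactly what Proposition \ref{hyperchar} uses to place $\mathrm{Re}(z_0\pm\sqrt{w_0})$ in $[-\epsilon,0]$ and $[0,\epsilon]$, carrying out this estimate would actually strengthen the source. In short: your proposal is correct in outline and, on this one point, more careful than the paper's own proof.
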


\begin{proof}
    First, it is obviously even since $\Delta$ is even as a function of $\sqrt{w(0)}$. \\
    Secondly,  if $|Re(\sqrt{w(0)})|=|Re(\frac{z_1-z_2}{2}(0))|> \epsilon$, the real part will always increasing along the flow, therefore, it will never go through the support of the perturbation. Since $Re (\sqrt{w(t)})=e^t Re(\sqrt{w_0})$, $c(\sqrt{w_0})=|Re(\sqrt{w_0})|$ by the definition of $c$. Then $c=|Re|$ holds for $|Re(\sqrt{w(0)})|> \epsilon$.\\
    Let's show the smoothness. 
  We have the equation 
    \begin{equation}
       c(\sqrt{w_0}))=e^{-\frac{t}{2}}c(\sqrt{w(t)})\label{pullback}
    \end{equation}
    For a fixed sufficiently large $t>0$, we will have $c(\sqrt{w_0}))=e^{-\frac{t}{2}} Re(\sqrt{w_t})$ for $\sqrt{w_0}$ in any compact subset of $\mathbb{C}$. Since the flow $\widetilde{\varphi}$ is smooth, $e^{-\frac{t}{2}} Re(\sqrt{w_t})$ is a smooth function of $\sqrt{w_0}$ and $c$ is also smooth on any given compact subset. Hence, $c$ is smooth everywhere.\\
    If $|Re(\sqrt{w_0})|< \epsilon$, by the lemma \ref{escaping lemma}, $Re(\sqrt{w_t})$ goes to infinity eventually.  There exists a time $t_0>0$ such that $Re(\sqrt{w_{t_0}})=\epsilon$.
    For $t>t_0$, the flow is unperturbed, so $Re(\sqrt{w(t)})=e^{\frac{t-t_0}{2}}Re(\sqrt{w_{t_0}})=e^\frac{t-t_0}{2}\epsilon$. Then, we have $c(\sqrt{w_{t_0}})=e^{-\frac{t}{2}}(e^{\frac{t-t_0}{2}}\epsilon)=e^{-\frac{-t_0}{2}}\epsilon \leq \epsilon$. Hence, we can conclude that $c \leq \max({|Re(\sqrt{w_0})|, \epsilon})$
\end{proof}

As we know for any $t$ large enough, $z_i(t)= x_i e^\frac{t}{2}+i y e^{-\frac{3t}{2}}$, where
 $x_i= Re(z_0) \pm c(\sqrt{w_0})$ by the equation \eqref{Re(z_i)} in lemma \ref{lemma2} and the definition of $c$.

\begin{defn}
    Let $U_{-,-}$ be the stable manifold of $(-\infty,-\infty)$, $U_{-,+}$ be the stable manifold of $(-\infty,+\infty)$, and $U_{+,+}$ be the stable manifold of $(+\infty,+\infty)$. In addition, we let $H_{0,-}$ be the stable manifold of $(-\infty, 0)$ and $H_{0,+}$ be the stable manifold of $(0, \infty)$. By stable manifold, we mean the set of points which converge to a given limit under the downward gradient flow.

\end{defn}

 The hypersurface $H_{0,\pm}$ corresponds to the case when one $x_i$ is $0$ and the sign of the other is positive or negative. Moreover. the sign of $x_i$ will determine which $U_{\pm,\pm}$ the point is flowing into eventually. 

\begin{proposition}
\label{hyperchar}
The characterization of these two hypersurfaces $H_{0,\pm}$ are $H_{0,-}=\{ Re(z_0) +c(\sqrt{w_0})=0\}$ and  $H_{0,+}=\{ Re(z_0) -c(\sqrt{w_0})=0\}$. 
For any point in $H_{0,-}$, we have $Re(z_0+\sqrt{w_0})\in [-\epsilon,0]$ and for any point in $H_{0,+}$,  we have
$Re(z_0- \sqrt{w_0})\in [0, \epsilon]$.
\end{proposition}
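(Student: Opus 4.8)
The plan is to unwind the definitions of $H_{0,\pm}$ as stable manifolds and directly match them against the two hypersurfaces described. Recall that for large $t$ the $i$-th particle has $\mathrm{Re}(z_i(t)) = e^{t/2}\,x_i$ with $x_i = \mathrm{Re}(z_0)\pm c(\sqrt{w_0})$, as recorded just before the statement. Under the downward gradient flow, $\mathrm{Re}(z_i(t))\to +\infty$ if $x_i>0$, $\to -\infty$ if $x_i<0$, and stays $\equiv 0$ if $x_i=0$ (using the exponential flow \eqref{exponential flow} and the fact that the flow is asymptotically decoupled away from the diagonal). Therefore the point converges to $(-\infty,0)$ under the flow precisely when one of $\mathrm{Re}(z_0)\pm c(\sqrt{w_0})$ vanishes and the other is negative; I would check that $c\geq |{\mathrm{Re}(\sqrt{w_0})}| \geq 0$ (Lemma~\ref{funcc}(3)) forces the signs to work out, so that $\mathrm{Re}(z_0)+c(\sqrt{w_0})=0$ automatically entails $\mathrm{Re}(z_0)-c(\sqrt{w_0})\leq 0$, and symmetrically for $H_{0,+}$. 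This gives $H_{0,-}=\{\mathrm{Re}(z_0)+c(\sqrt{w_0})=0\}$ and $H_{0,+}=\{\mathrm{Re}(z_0)-c(\sqrt{w_0})=0\}$.

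For the second part, I would substitute the bound on $c$ from Lemma~\ref{funcc}(3), namely $|{\mathrm{Re}(\sqrt{w_0})}|\leq c(\sqrt{w_0})\leq\max(|{\mathrm{Re}(\sqrt{w_0})}|,\epsilon)$, into the defining equations. On $H_{0,-}$ we have $\mathrm{Re}(z_0) = -c(\sqrt{w_0})$, hence
\[
\mathrm{Re}(z_0+\sqrt{w_0}) = -c(\sqrt{w_0})+\mathrm{Re}(\sqrt{w_0}).
\]
Since $\mathrm{Re}(\sqrt{w_0})\leq |{\mathrm{Re}(\sqrt{w_0})}|\leq c(\sqrt{w_0})$, the right-hand side is $\leq 0$; and since $c(\sqrt{w_0})\leq\max(|{\mathrm{Re}(\sqrt{w_0})}|,\epsilon)$, we get $-c(\sqrt{w_0})+\mathrm{Re}(\sqrt{w_0}) \geq \mathrm{Re}(\sqrt{w_0})-\max(|{\mathrm{Re}(\sqrt{w_0})}|,\epsilon)\geq -\epsilon$ (handling the two cases of which term achieves the max, and using $\mathrm{Re}(\sqrt{w_0})-|{\mathrm{Re}(\sqrt{w_0})}|\in\{0,2\mathrm{Re}(\sqrt{w_0})\}$ with the relevant sign). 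So $\mathrm{Re}(z_0+\sqrt{w_0})\in[-\epsilon,0]$. The computation for $H_{0,+}$ is the mirror image: $\mathrm{Re}(z_0)=c(\sqrt{w_0})$ gives $\mathrm{Re}(z_0-\sqrt{w_0}) = c(\sqrt{w_0})-\mathrm{Re}(\sqrt{w_0})\in[0,\epsilon]$ by the same inequalities.

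One subtlety I want to be careful about is the labeling ambiguity: $\pm\sqrt{w_0}$ is only defined up to sign, so ``$z_0+\sqrt{w_0}$'' and ``$z_0-\sqrt{w_0}$'' are interchangeable as unordered data, and the two roots $x_\pm = \mathrm{Re}(z_0)\pm c(\sqrt{w_0})$ are likewise only an unordered pair; I would note that since $c\geq 0$, the condition ``one root is $0$, the other $\leq 0$'' is unambiguously $\mathrm{Re}(z_0)+c=0$ (the larger root being the one that vanishes is impossible unless both are $0$, in which case $c=0$ and all statements are trivially consistent). The genuine content to verify — and what I'd treat as the only real step rather than bookkeeping — is that the stable manifold of $(-\infty,0)$ is exactly this hypersurface and not something larger: I'd confirm that the flow of the regularized $\widetilde\Phi$ genuinely has the asymptotics $\mathrm{Re}(z_i(t))=e^{t/2}x_i$ with these $x_i$, which is precisely what Lemma~\ref{lemma2} and the definition of $c$ provide, so the limit is determined by the sign of $x_i$ alone. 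The main (minor) obstacle is therefore just organizing the sign analysis cleanly around the $\pm$ ambiguity and the two cases in the $\max$; there is no deep difficulty once Lemmas~\ref{escaping lemma}, \ref{lemma2}, and \ref{funcc} are in hand.
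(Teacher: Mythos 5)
Your proposal is correct and takes the same approach the paper has in mind: the paper's entire proof of Proposition~\ref{hyperchar} is the single sentence ``The result then follows from part 3 of Lemma \ref{funcc},'' and you have simply filled in the chain of inequalities $|\mathrm{Re}| \le c \le \max(|\mathrm{Re}|,\epsilon)$ together with the flow asymptotics from Lemma~\ref{lemma2} and the definition of $c$ that the paper leaves implicit. Your observation that the bound $\mathrm{Re}(\sqrt{w_0})-\max(|\mathrm{Re}(\sqrt{w_0})|,\epsilon)\ge-\epsilon$ requires fixing the branch with $\mathrm{Re}(\sqrt{w_0})\ge 0$ (otherwise the case $\mathrm{Re}(\sqrt{w_0})\le-\epsilon$ gives $2\mathrm{Re}(\sqrt{w_0})<-\epsilon$) is a genuine subtlety that the paper's statement glosses over, and your remark that the characterization $\mathrm{Re}(z_0)+c=0$ versus $\mathrm{Re}(z_0)-c=0$ is unambiguous because $c>0$ forces the larger root to be the vanishing one is exactly the right bookkeeping.
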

\begin{proof} The result then follows from part $3$ of Lemma \ref{funcc}. 
\end{proof}

\begin{remark}
    $H_{0,-} \cap H_{0,+} = \emptyset $ and this means that the piece $U_{-,+}$ has two disjoint boundaries.
\end{remark}

To show each piece of $U_{\pm,\pm}$ is indeed a Liouville sector, we start by constructing an $I$ function and showing that it satisfies the conditions of the definition of Liouville sector.

For a point $(z_1, z_2) \in Sym^2(\mathbb{C})$ near the boundaries  $H_{0,\pm}$, let us assume $|Re(z_1)| \leq |Re(z_2)|$ , we want to construct a tubular neighborhood $\mathcal{N}_{0,\pm}$ containing $H_{0,\pm}$.

We denote the downward gradient flow of the function $\widetilde{\Phi}$ by $\Psi_t$. 

\begin{defn}
\begin{enumerate} 

    \item Let $V_{-} := \{Re(z_1) \in (-\epsilon, \epsilon) \mbox{ and }Re(z_2) <- 2\epsilon \}$ and $V_{+} := \{Re(z_1) \in (-\epsilon, \epsilon) \mbox{ and }Re(z_2) > 2\epsilon \}$.
    \item Let $\mathcal{N}_{0,\pm} := \bigcup_{t\geq 0} \Psi^{-1}_{t}(V_\pm)$, these are all the points that will flow to $V_\pm$ after some time $t \geq 0$.
\end{enumerate} 
\end{defn}

For any point in $V_\pm$, it will not hit the perturbed area under the flow. Hence, we can just pull back along the flow to know the imaginary part. If $z_1(t_0),z_2(t_0))\in V_\pm$, then we have
\begin{equation}
  \label{eq3}
    Im(z_1(t))=e^{-\frac{3}{2}(t-t_0)} Im(z_1(t_0)). 
\end{equation}
for all $t>t_0$.

\begin{lemma}
$\mathcal{N}_{0,\pm}$ is an open neighborhood of $H_{0,\pm}$ . 
\end{lemma}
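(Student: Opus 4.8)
The plan is to show two things: (i) $H_{0,\pm}\subseteq\mathcal N_{0,\pm}$, and (ii) $\mathcal N_{0,\pm}$ is open. For (i), recall from Proposition~\ref{hyperchar} that $H_{0,-}=\{\mathrm{Re}(z_0)+c(\sqrt{w_0})=0\}$ and $H_{0,+}=\{\mathrm{Re}(z_0)-c(\sqrt{w_0})=0\}$; equivalently $H_{0,\pm}$ is the stable manifold of $(0,\mp\infty)$ under $\Psi_t$, i.e. one of the two coordinates $\mathrm{Re}(z_i)$ stays at $0$ while the other runs off to $\mp\infty$. So take $p\in H_{0,\pm}$. Along the downward flow $\Psi_t(p)$, by Lemma~\ref{escaping lemma} and the formulas \eqref{Re(z_i)} for $\mathrm{Re}(z_i(t))$, the coordinate that is non-zero has $|\mathrm{Re}|$ eventually exceeding $2\epsilon$ (in fact it grows like $e^{t/2}$ once outside the perturbation region, by Lemma~\ref{funcc}(3)), while the other coordinate remains identically $0\in(-\epsilon,\epsilon)$. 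Hence for $t$ large enough $\Psi_t(p)\in V_\mp$ (with the sign convention matching $H_{0,\pm}$), so $p\in\Psi_t^{-1}(V_\mp)\subseteq\mathcal N_{0,\pm}$. I would be a little careful here with the ordering assumption $|\mathrm{Re}(z_1)|\le|\mathrm{Re}(z_2)|$, noting that on $H_{0,\pm}$ one has $\mathrm{Re}(z_1)=0$ exactly (the ``$z_1$'' slot), so the convention is automatically compatible with the definition of $V_\pm$.

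For (ii), the key point is that $\mathcal N_{0,\pm}=\bigcup_{t\ge0}\Psi_t^{-1}(V_\pm)$ is a union of open sets. Indeed $V_\pm$ is open in $\mathrm{Sym}^2(\mathbb C)$, being defined by strict inequalities on the continuous functions $\mathrm{Re}(z_1),\mathrm{Re}(z_2)$ (symmetric functions of the pair, hence well-defined on the symmetric product — or, after the identification $\mathrm{Sym}^2(\mathbb C)\cong\mathbb C^2$, open conditions on $z=\tfrac12(z_1+z_2)$ and $w=(\tfrac{z_1-z_2}{2})^2$). The flow $\Psi_t$ is a diffeomorphism for each fixed $t$ — the smoothed potential $\widetilde\Phi$ has a complete gradient flow (its gradient is bounded away from $0$ in the relevant directions and is otherwise the decoupled linear flow \eqref{exponential flow}, so no finite-time blow-up) — hence $\Psi_t^{-1}(V_\pm)$ is open for every $t\ge0$, and an arbitrary union of open sets is open.

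The main obstacle I anticipate is not openness, which is essentially formal, but pinning down item (i) cleanly: one must verify that \emph{every} point of $H_{0,\pm}$ — including points on or near the diagonal $w=0$ and points whose flow line passes through the perturbed region $|\mathrm{Re}(\sqrt{w_0})|<\epsilon$ — eventually lands in the ``nice'' region $V_\pm$ where the flow is the unperturbed product flow. This is exactly what Lemma~\ref{escaping lemma} (Re$(w)\to+\infty$) together with Lemma~\ref{funcc}(3) (the bound $c\le\max(|\mathrm{Re}|,\epsilon)$, giving $c=|\mathrm{Re}|$ once $|\mathrm{Re}|>\epsilon$) is designed to supply: after finite time the flow line exits the support of the perturbation, and from then on $\mathrm{Re}(z_i(t))=e^{t/2}(\mathrm{Re}(z_0)\pm c(\sqrt{w_0}))$ with one factor $\mathrm{Re}(z_0)\mp c(\sqrt{w_0})=0$ and the other of modulus $2|\,\mathrm{Re}(z_0)\,|e^{t/2}$ (nonzero since $p\notin H_{0,\mp}$, using the Remark that $H_{0,-}\cap H_{0,+}=\emptyset$), which exceeds $2\epsilon$ for $t\gg0$. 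So the argument reduces to citing those lemmas with the right sign bookkeeping; I would keep the exposition short and defer the sign chase to a sentence referencing Proposition~\ref{hyperchar}.
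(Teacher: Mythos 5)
Your argument follows exactly the same two-step structure as the paper's: openness of $\mathcal N_{0,\pm}$ because it is a union of preimages of the open set $V_\pm$ under the time-$t$ flow maps, and containment $H_{0,\pm}\subseteq\mathcal N_{0,\pm}$ because Lemma~\ref{escaping lemma} guarantees every point of $H_{0,\pm}$ eventually lands in $V_\pm$. You spell out more of the sign bookkeeping and the role of Lemma~\ref{funcc}(3) than the paper's terse two-sentence proof, but the approach is identical.
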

\begin{proof}
Since $V_\pm$ is open and $\Psi^{-1}_{t}(V_\pm)$ is open, $\mathcal{N}_{0,\pm}$ is also open. For a point $a$ on the hypersurface $H_{0,\pm}$, it will eventually flows to $V_\pm$ by lemma \ref{escaping lemma} for some $t_a>0$. Hence, by pulling back along the gradient flow, we have $a \in \Psi^{-1}_{t_a}(V_\pm) \subset \bigcup_{t\in \mathbb{R_+}} \Psi^{-1}_{t}(V_\pm) = \mathcal{N}_{0,\pm}$.
\end{proof} 

\begin{proposition}
    These two neighborhoods are disjoint, i.e. $\mathcal{N}_{0,-} \cap  \mathcal{N}_{0,+} =\emptyset$.
\end{proposition}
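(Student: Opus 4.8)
The plan is to argue by contradiction straight from the definition $\mathcal{N}_{0,\pm}=\bigcup_{t\ge 0}\Psi_t^{-1}(V_\pm)$, using the observation already recorded before the openness lemma: a trajectory of the downward flow $\Psi_t$ that has once entered $V_\pm$ never again meets the support of the perturbation, and hence from that time on is governed by the decoupled exponential flow \eqref{exponential flow}, which acts on the real part of each of the two constituent points by multiplication by $e^{t/2}$.

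Concretely: suppose $p\in\mathcal{N}_{0,-}\cap\mathcal{N}_{0,+}$, and choose $t_-,t_+\ge 0$ with $\Psi_{t_-}(p)\in V_-$ and $\Psi_{t_+}(p)\in V_+$. A configuration in $V_-$ has one real part $<-2\epsilon$, whereas a configuration in $V_+$ has both real parts $>-\epsilon$; in particular $V_-\cap V_+=\emptyset$, so $t_-\ne t_+$ (otherwise $\Psi_{t_-}(p)$ would lie in both), and after interchanging the roles of the two sets if necessary we may assume $t_-<t_+$. Let $r(t)$ denote the smaller of the two real parts of $\Psi_t(p)$. Since $\Psi_{t_-}(p)\in V_-$ we have $r(t_-)<-2\epsilon<0$. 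For $t\ge t_-$ the trajectory has left the perturbation region, so by \eqref{exponential flow}, applied separately to each of the two points (the flow being decoupled away from the diagonal), each real part — and hence their minimum — is multiplied by $e^{(t-t_-)/2}\ge 1$; because $r(t_-)<0$ this yields $r(t)=e^{(t-t_-)/2}r(t_-)\le r(t_-)<-2\epsilon$ for all $t\ge t_-$, and in particular $r(t_+)<-\epsilon$. But $\Psi_{t_+}(p)\in V_+$ forces $r(t_+)>-\epsilon$. This contradiction shows $\mathcal{N}_{0,-}\cap\mathcal{N}_{0,+}=\emptyset$.

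The step carrying the real content — and the one I would take most care over — is the claim that a trajectory which has entered $V_\pm$ stays outside the support of the perturbation forever after, so that the flow there is literally \eqref{exponential flow} rather than its deformation. This is the same mechanism used in Lemma \ref{funcc} (and the remark following Lemma \ref{escaping lemma}): a point of $V_\pm$ has $|Re(\sqrt w)|=\tfrac12|Re(z_1)-Re(z_2)|$ bounded below by a fixed positive multiple of $\epsilon$, hence lies outside the support of the perturbation, and on the unperturbed region $|Re(\sqrt w)|$ is strictly increasing along $\Psi_t$, so the trajectory never re-enters that support. (If one wants this lower bound to be exactly $\epsilon$ rather than $\tfrac{\epsilon}{2}$, it is enough to shrink the support of the perturbation, or to enlarge the constant $2\epsilon$ in the definition of $V_\pm$; neither change affects the earlier lemmas nor the statement that $\mathcal{N}_{0,\pm}$ is a neighborhood of $H_{0,\pm}$.) Granting this, everything else is the elementary sign bookkeeping above.
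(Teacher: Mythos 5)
The paper states this proposition without proof, so there is no argument to compare against; your proof is a sound way to close the gap, and the core mechanism --- once the trajectory clears the perturbation region, each $\mathrm{Re}\,z_i$ is multiplied by $e^{(t-t_-)/2}$ under the downward flow, so a negative minimum stays below $-2\epsilon$ --- is exactly the right one. Two points deserve tightening. First, ``after interchanging the roles of the two sets if necessary we may assume $t_-<t_+$'' is not a literal symmetry of the argument as written: the definitions of $V_\pm$ are asymmetric, and tracking $\min(\mathrm{Re}\,z_1,\mathrm{Re}\,z_2)$ only produces a contradiction when the trajectory visits $V_-$ first. To handle $t_+<t_-$ you should either invoke the genuine anti-holomorphic symmetry $(z_1,z_2)\mapsto(-\bar z_1,-\bar z_2)$ (which preserves $\varphi$, the K\"ahler metric, and a symmetrically chosen smoothing, and exchanges $V_-\leftrightarrow V_+$), or, more elementarily, rerun the monotonicity argument on $\max(\mathrm{Re}\,z_1,\mathrm{Re}\,z_2)$: it is $>2\epsilon$ on $V_+$, hence stays $>2\epsilon$ for all later times, while on $V_-$ it lies in $(-\epsilon,\epsilon)$. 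Second, you rightly flag the constant issue: on $V_\pm$ one only obtains $|\mathrm{Re}(\sqrt w)|>\epsilon/2$, which does not on its face clear a perturbation of radius $\epsilon$; your proposed fix (shrink the support, or enlarge the $2\epsilon$ in the definition of $V_\pm$) is the correct repair and is consistent with the paper's own unproved assertion that points of $V_\pm$ never re-enter the perturbed region under the flow.
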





\begin{defn}
    We can define a function $I: \mathcal{N}_{0,-} \cup \mathcal{N}_{0,+}  \to \mathbb{R}$ given by $I=Im(z_1)$ for $(z_1,z_2) \in V_\pm$ and $I=e^{\frac{3}{2}t}Im(z_1(t))$ for $(z_1,z_2) \in \Psi^{-1}_{t}(V_\pm)$. This is independent of choices of $t$ by \eqref{eq3}.
\end{defn}

\begin{lemma}
    $ZI= \frac{3}{2} I$
    \label{lemmai1}
\end{lemma}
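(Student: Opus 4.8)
The plan is to verify directly that the function $I$ transforms with weight $\tfrac32$ under the Liouville flow, using the explicit exponential form of the gradient flow of $\widetilde\Phi$. Recall that $Z = \nabla_{g_{\widetilde\Phi}}\widetilde\Phi$ is the Liouville vector field and that $\Psi_t$ denotes its downward flow, so that the upward flow is $\Psi_{-t}$; the identity $ZI = \tfrac32 I$ is equivalent to the statement that $I\circ\Psi_{-t} = e^{\frac{3}{2}t} I$ for all $t$, i.e. that $I$ is an eigenfunction of the flow with eigenvalue $\tfrac32$.

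First I would work in the region $V_\pm$, where by construction $I = \operatorname{Im}(z_1)$ and the flow is unperturbed. There the gradient flow is the decoupled exponential flow \eqref{exponential flow} in the coordinates $z_1, z_2$: each $\operatorname{Im}(z_i)$ evolves as $\operatorname{Im}(z_i(t)) = e^{-\frac{3t}{2}}\operatorname{Im}(z_i(0))$. Hence along the upward flow $\operatorname{Im}(z_1)$ is multiplied by $e^{\frac{3}{2}t}$, which is exactly the eigenfunction property for eigenvalue $\tfrac32$; differentiating at $t=0$ gives $ZI = \tfrac32 I$ on $V_\pm$. Next I would extend this to all of $\mathcal N_{0,\pm}$ using the definition of $I$ there: for $(z_1,z_2)\in\Psi_t^{-1}(V_\pm)$ we set $I = e^{\frac{3}{2}t}\operatorname{Im}(z_1(t))$, and equation \eqref{eq3} shows this is independent of the choice of $t$. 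The key point is that the downward flow $\Psi_s$ sends a point with parameter $t$ to a point with parameter $t - s$ (it takes $s$ less time to reach $V_\pm$), so $I\circ\Psi_s = e^{\frac{3}{2}(t-s)}\operatorname{Im}(z_1(t)) = e^{-\frac{3}{2}s} I$; equivalently $I\circ\Psi_{-s} = e^{\frac{3}{2}s} I$. Differentiating in $s$ at $s = 0$ yields $ZI = \tfrac32 I$ on the whole neighborhood.

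Alternatively, and perhaps more cleanly, I would observe that $I$ is by definition constant along... no: $I$ is the unique extension of $\operatorname{Im}(z_1)|_{V_\pm}$ that satisfies the homogeneity $I\circ\Psi_t = e^{-\frac{3}{2}t}I$, so the eigenfunction relation is essentially built into the definition; the only thing requiring an argument is that this extension is well-defined and smooth, which is exactly what \eqref{eq3} and the preceding lemmas (in particular Lemma \ref{escaping lemma} guaranteeing every point of $\mathcal N_{0,\pm}$ eventually enters $V_\pm$) provide. So the proof reduces to: (i) recall $Z$ generates $\Psi_{-t}$; (ii) unwind the definition of $I$ to get $I\circ\Psi_{-t} = e^{\frac{3}{2}t}I$; (iii) apply $\frac{d}{dt}\big|_{t=0}$.

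The main obstacle is purely bookkeeping: making sure the sign conventions for "downward" versus "upward" flow are consistent, that the weight $\tfrac32$ (rather than $\tfrac12$ or $-\tfrac32$) comes out correctly from the coefficient $-\tfrac{3}{2}$ appearing in the $y$-component of \eqref{exponential flow} with $\alpha = \tfrac32$, and that one is allowed to differentiate the relation $I\circ\Psi_{-t} = e^{\frac{3}{2}t}I$ in $t$ — which is fine since $\widetilde\Phi$ is smooth and its flow is smooth, as already used in the proof of Lemma \ref{funcc}. There is no analytic difficulty here; the content is entirely in the earlier lemmas that set up the coordinates and the well-definedness of $I$.
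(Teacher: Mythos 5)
Your proof is correct and takes essentially the same route as the paper: unwind the definition of $I$ on $\mathcal{N}_{0,\pm}$ using the parameter $t$, apply equation \eqref{eq3} to see $I\circ\Psi_s = e^{-\frac{3}{2}s}I$, and differentiate at $s=0$. The paper's proof is a direct computation of $\frac{d}{ds}\Psi_s^*(I)$ with the same ingredients, so the two arguments differ only in presentation.
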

\begin{proof}
    Since $\Psi$ is the gradient flow of $-Z$, we only need to show that $\frac{d}{ds} {\Psi^*_s}(I)=-\frac{3}{2}I$.\\
    Let $t$ be large enough that $(z_1,z_2)\in \Psi^{-1}_{t}(V_\pm)$.
    By $\eqref{eq3}$,
    \begin{align*}
            \frac{d}{ds} {\Psi^*_s}(I)=\frac{d}{ds} I\circ \Psi_s &= \frac{d}{ds}(e^{\frac{3}{2}t}Im(z_1(s+t)) \\&=\frac{d}{ds}(e^{\frac{3}{2}t-\frac{3}{2}(s+t-t)}Im(z_1(t))))\\
    &=\frac{d}{ds}(e^{\frac{3}{2}(t-s)}Im(z_1(t))) \\
    &=-\frac{3}{2} e^{\frac{3}{2}(t-s)}Im(z_1(t))\\ 
    &=-\frac{3}{2}e^{\frac{3}{2}t}Im((z_1(s+t)) \\
    &= -\frac{3}{2}I
    \end{align*}
\end{proof}

\begin{lemma} $dI|_{\mbox {char. fol.}}>0$
 \label{lemmai2}
\end{lemma}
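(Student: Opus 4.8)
The plan is to reduce the inequality to an explicit linear computation in the unperturbed region $V_\pm$ and then transport the conclusion to all of $\mathcal{N}_{0,\pm}$ using equivariance under the downward gradient flow $\Psi_t$. Write $z_j=x_j+iy_j$. First I would use Proposition~\ref{hyperchar} together with part~3 of Lemma~\ref{funcc} (so that $c(\sqrt{w_0})=|\mathrm{Re}(\sqrt{w_0})|=\tfrac12|x_1-x_2|$ on $V_\pm$) to identify $H_{0,\pm}\cap V_\pm$ with the coordinate hyperplane $\{x_1=0\}$; here $z_1$ denotes the point of the configuration with small real part, in accordance with the running assumption $|\mathrm{Re}(z_1)|\le|\mathrm{Re}(z_2)|$. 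Since $V_\pm$ lies away from the diagonal, $(z_1,z_2)$ are honest coordinates there and the perturbation is inactive, so $\widetilde{\Phi}=\varphi(z_1)+\varphi(z_2)$; because $\varphi$ differs from $\tfrac14|z|^2$ only by the harmonic function $\tfrac{1-2\alpha}{4}\mathrm{Re}(z^2)$, the form $dd^c\widetilde{\Phi}$ restricted to $V_\pm$ is a positive constant multiple of $dx_1\wedge dy_1+dx_2\wedge dy_2$.

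For this linear symplectic form the characteristic foliation of $\{x_1=0\}$ is spanned by $\partial_{y_1}$, and once $C$ is oriented by the convention $\omega(N,C)>0$ for $N$ an inward normal of the sector under consideration, its positively oriented generator is $\pm\partial_{y_1}$, the sign being determined by which of the pieces $U_{\pm,\pm}$ one regards $H_{0,\pm}$ as bounding. On that generator $dI=\pm dy_1$ takes a positive value, so the inequality holds on $V_\pm$. (Pinning down this sign is precisely what decides whether $I$ or $-I$ is the correct boundary function near $\mathcal{N}_{0,+}$ versus $\mathcal{N}_{0,-}$, which is consistent with the fact that condition~(1) of Definition~\ref{sector_covers} asks only that $dI_i|_{\mathrm{char.fol.}}\neq 0$.)

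Now let $p\in H_{0,\pm}$ be arbitrary. Since $\mathcal{N}_{0,\pm}$ is an open neighborhood of $H_{0,\pm}$ and by its definition consists of points flowing into $V_\pm$, there is $t\ge 0$ with $\Psi_t(p)\in V_\pm$; as $\Psi_t$ preserves the stable manifold $H_{0,\pm}$, in fact $q:=\Psi_t(p)\in V_\pm\cap H_{0,\pm}$. The flow $\Psi_t$ is the flow of $-Z$, hence preserves each $H_{0,\pm}$ and each $U_{\pm,\pm}$ and is conformally symplectic, $\Psi_t^*\omega=e^{-t}\omega$; therefore it carries the oriented characteristic foliation of $H_{0,\pm}$ at $p$ to the oriented characteristic foliation at $q$. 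Combining this with the scaling identity $\Psi_t^*I=e^{-\frac{3}{2}t}I$ (Lemma~\ref{lemmai1} in integrated form): if $v_q$ is the positively oriented characteristic generator at $q$ and $v_p:=(\Psi_{-t})_*v_q$, then
\[
dI(v_p)=\bigl((\Psi_{-t})^*dI\bigr)(v_q)=d\bigl((\Psi_{-t})^*I\bigr)(v_q)=e^{\frac{3}{2}t}\,dI(v_q)>0,
\]
where the last inequality is the statement already proved on $V_\pm$. This gives $dI|_{\mathrm{char.fol.}}>0$ at $p$, and hence everywhere.

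I expect the only genuinely delicate point to be the orientation bookkeeping in the model region $V_\pm$: correctly identifying the inward normal of the relevant sector, the induced orientation of the characteristic line field, and the consequent sign of $I$ on $\mathcal{N}_{0,-}$ and on $\mathcal{N}_{0,+}$. Once that sign is fixed, the remaining ingredients — the identification of $H_{0,\pm}\cap V_\pm$, the computation of $dd^c\widetilde{\Phi}$ on $V_\pm$, and the flow-transport step — are routine, the last of these because $e^{3t/2}>0$.
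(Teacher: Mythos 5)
Your proof takes essentially the same route as the paper: identify $H_{0,\pm}\cap V_\pm$ with $\{\mathrm{Re}(z_1)=0\}$ where the model is unperturbed, observe that the characteristic foliation is the $y_1$-direction and $I=y_1$ there, and then transport the positivity of $dI$ to the rest of $\mathcal{N}_{0,\pm}$ using the downward gradient flow $\Psi_t$, which preserves $H_{0,\pm}$ and scales $dI$ by a positive factor $e^{\pm 3t/2}$. The only difference is that you are more explicit about the conformal scaling $\Psi_t^*\omega=e^{-t}\omega$ and about the orientation convention on the characteristic foliation, which the paper's proof leaves implicit; this is a welcome bit of extra care but does not change the argument.
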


\begin{proof}
    The characteristic foliation inside $V_\pm$  is $\Ker{\omega|_{H_{0,\pm}\cap V_\pm}}=\Ker{\omega|_{\{Re(z_1)=0\}}}$ which is the imaginary $y_1$ direction. Since for $(z_1,z_2)\in H_{0,\pm}\cap V_\pm$, $I=Im(z_1)=y_1$, $dI$ is always positive on the characterisitc foliation of $H_{0,\pm}\cap V_\pm$. For the characteristic foliation in $\mathcal{N}_{0,\pm}$, since there exists some $t_0>0$ such that $\Psi_{t_0}(\mathcal{N}_{0,\pm}) \subset V_\pm $, $\Psi_{t_0}$ maps the characteristic foliation to itself and scales $dI$ by $e^{-\frac{3}{2}t_0}$. Hence, $dI$ will still be positive. 
\end{proof}

Since the hypersurfaces $H_{0,\pm}$ were defined as stable manifolds using the gradient flow of the Liouville vector field $Z$, we have:
\begin{lemma}
 \label{lemmai3}
    $Z$ is tangent to $H_{0,\pm}$.
\end{lemma}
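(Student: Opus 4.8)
The plan is to show that each $H_{0,\pm}$ is a smooth hypersurface cut out by a function that rescales by a positive scalar under the downward gradient flow $\Psi_t$, and then to obtain tangency of $Z$ by differentiating this scaling relation at $t=0$. Conceptually this is nothing but the fact that a stable set is invariant under the very flow that defines it (since $\Psi_t(\Psi_s x)=\Psi_{t+s}x$, and the condition "$\Psi_t x\to$ limit as $t\to\infty$" is insensitive to the time shift $s$), but to even speak of tangency we need the stable set to be a submanifold, so the computation below both supplies that and makes the invariance precise.

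First I would invoke Proposition \ref{hyperchar} to write $H_{0,\pm}=\{f_\pm=0\}$ with
\[
f_\pm(z_0,w_0):=\operatorname{Re}(z_0)\pm c(\sqrt{w_0}).
\]
By Lemma \ref{funcc} the function $c$ is smooth and depends only on $w_0$; since $\partial f_\pm/\partial\operatorname{Re}(z_0)=1\neq 0$ everywhere, $0$ is a regular value of $f_\pm$ and $H_{0,\pm}$ is a smooth embedded hypersurface. Hence "$Z$ is tangent to $H_{0,\pm}$" is equivalent to "$Zf_\pm\equiv 0$ on $\{f_\pm=0\}$", and that is what I would prove.

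The key identity is $f_\pm\circ\Psi_t=e^{t/2}f_\pm$ for all $t\ge 0$. Because $dd^c\widetilde\Phi$ is block diagonal in the $z$- and $w$-coordinates (as computed just before Lemma \ref{escaping lemma}), the downward flow decouples: the $z$-component is governed by $2\varphi(z)$ alone and therefore satisfies $\operatorname{Re}(z(t))=e^{t/2}\operatorname{Re}(z_0)$ exactly as in \eqref{exponential flow}, with no correction near the diagonal, while the defining scaling property \eqref{pullback} of $c$ gives $c(\sqrt{w(t)})=e^{t/2}c(\sqrt{w_0})$. Adding these, $f_\pm(\Psi_t(p))=e^{t/2}\bigl(\operatorname{Re}(z_0)\pm c(\sqrt{w_0})\bigr)=e^{t/2}f_\pm(p)$. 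Differentiating at $t=0$ and using that $\Psi_t$ is the flow of $-Z$,
\[
-(Zf_\pm)(p)=\frac{d}{dt}\Big|_{t=0}f_\pm(\Psi_t(p))=\tfrac12 f_\pm(p),
\]
so $Zf_\pm=-\tfrac12 f_\pm$, which vanishes identically on $H_{0,\pm}=\{f_\pm=0\}$. Thus $Z\in\ker df_\pm=TH_{0,\pm}$ along $H_{0,\pm}$, as claimed.

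The only points that need care — and the nearest thing to an obstacle — are: (i) that the compactly supported perturbation producing $\widetilde\Phi$ respects the splitting $\widetilde\Phi=2\varphi(z)+2\widetilde\varphi(\sqrt w)$, so that the $z$-flow really is the unperturbed flow \eqref{exponential flow} everywhere (this is built into the definition of $\widetilde\Phi$); and (ii) that the homogeneity relation \eqref{pullback} for $c$ holds for all $t\ge 0$, not merely for $t\gg 0$, which follows from the definition of $\Delta$ and $c$ as pullbacks along the flow together with the exact homogeneity of the unperturbed flow outside the support of the perturbation. Granting these, the proof is exactly the short computation above.
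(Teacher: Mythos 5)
Your argument is correct, and at bottom it rests on the same observation that the paper records in the one sentence immediately preceding the lemma: $H_{0,\pm}$ is by definition a stable set of the downward gradient flow of $Z$, hence is invariant under that flow, hence $Z$ is tangent to it. What you add --- and what the paper's terse justification silently omits --- is the verification that $H_{0,\pm}$ is an embedded smooth hypersurface at all (without which ``$Z$ is tangent'' is not yet a well-posed assertion); you get this from the explicit description $H_{0,\pm}=\{f_\pm=0\}$ of Proposition~\ref{hyperchar} together with the smoothness of $c$ from Lemma~\ref{funcc} and the fact that $\partial f_\pm/\partial\operatorname{Re}(z_0)=1$, so $0$ is a regular value. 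Your scaling identity $f_\pm\circ\Psi_t=e^{t/2}f_\pm$ and the resulting conformal weight $Zf_\pm=-\tfrac12 f_\pm$ are a genuine strengthening of the bare tangency claim. The two caveats you flag are exactly the ones that need checking, and both go through: the perturbation in $\widetilde\Phi$ is confined to the $w$-factor by construction, so $dd^c\widetilde\Phi$ is block diagonal and the $z$-flow is the unperturbed one everywhere; and \eqref{pullback} extends from $t\gg 0$ to all $t\ge 0$ by the semigroup law $\Psi_{t+s}=\Psi_t\circ\Psi_s$ together with the exact $e^{t/2}$-homogeneity of $\operatorname{Re}(\sqrt{w})$ outside the support of the perturbation. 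In short, same idea as the paper, but spelled out carefully enough to close the gap.
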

\qed

\begin{theorem}
    $U_{-,-}$, $U_{-,+}$, and $U_{+,+}$ are Liouville sectors.  We have the decomposition $Sym^2(\mathbb{C})=U_{-,-} \cup_{H_{0,-}} U_{-,+} \cup_{H_{0,+}} U_{+,+}$.
\end{theorem}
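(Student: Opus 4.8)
The plan is to read off the decomposition from the explicit form of the downward gradient flow $\Psi_t$ of $\widetilde{\Phi}$, and then to verify the Liouville sector axioms for each open piece using Lemmas~\ref{lemmai1}, \ref{lemmai2} and \ref{lemmai3}.

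First I would pin down the three pieces set-theoretically. Combining Lemma~\ref{lemma2} with the definition of $c$, a trajectory satisfies, for $t\gg 0$,
\[
\mathrm{Re}\big(z_i(t)\big)=e^{t/2}\big(\mathrm{Re}(z_0)\pm c(\sqrt{w_0})\big),\qquad \mathrm{Im}\big(z_i(t)\big)=e^{-3t/2}\big(\mathrm{Im}(z_0)\pm\sigma(t)\big),
\]
so each of $\mathrm{Re}(z_1),\mathrm{Re}(z_2)$ runs off to $\pm\infty$ with sign $\operatorname{sign}\!\big(\mathrm{Re}(z_0)\pm c(\sqrt{w_0})\big)$, while both imaginary parts tend to $0$; and when $\mathrm{Re}(z_0)\pm c(\sqrt{w_0})=0$ the corresponding $z_i$ converges to $0$. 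Hence a point lies in $U_{-,-}$, $U_{+,+}$ or $U_{-,+}$ according to whether both of $\mathrm{Re}(z_0)\pm c(\sqrt{w_0})$ are negative, both positive, or of opposite sign, and the separating hypersurfaces are exactly $H_{0,-}=\{\mathrm{Re}(z_0)+c(\sqrt{w_0})=0\}$ and $H_{0,+}=\{\mathrm{Re}(z_0)-c(\sqrt{w_0})=0\}$ from Proposition~\ref{hyperchar}. Since $c>0$ everywhere (Lemma~\ref{funcc}) these hypersurfaces are disjoint (the remark after Proposition~\ref{hyperchar}), and since $c$ is smooth with $\partial_{\mathrm{Re}(z_0)}\big(\mathrm{Re}(z_0)\pm c\big)=1\neq 0$, each $H_{0,\pm}$ is a smooth embedded hypersurface locally separating $\mathrm{Sym}^2(\mathbb{C})$ into two closed half-spaces. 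Taking $\overline{U_{-,-}}=\{\mathrm{Re}(z_0)+c(\sqrt{w_0})\le 0\}$, $\overline{U_{-,+}}=\{|\mathrm{Re}(z_0)|\le c(\sqrt{w_0})\}$, $\overline{U_{+,+}}=\{\mathrm{Re}(z_0)-c(\sqrt{w_0})\ge 0\}$, these are manifolds-with-boundary covering $\mathrm{Sym}^2(\mathbb{C})$ with $\overline{U_{-,-}}\cap\overline{U_{-,+}}=H_{0,-}$, $\overline{U_{-,+}}\cap\overline{U_{+,+}}=H_{0,+}$ and $\overline{U_{-,-}}\cap\overline{U_{+,+}}=\varnothing$, which is the asserted gluing $\mathrm{Sym}^2(\mathbb{C})=U_{-,-}\cup_{H_{0,-}}U_{-,+}\cup_{H_{0,+}}U_{+,+}$.

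Next I would check each piece is a Liouville sector. The ambient $(\mathbb{C}^2,\omega=dd^c\widetilde{\Phi})$ carries a genuine Kähler form, since the computation in the proof of Lemma~\ref{escaping lemma} shows $dd^c\widetilde{\Phi}$ is a positive $(1,1)$-form, with associated Liouville field $Z$ complete and with explicit flow $\Psi_{-t}$. By Lemma~\ref{lemmai3} the boundary hypersurfaces $H_{0,\pm}$ are unions of $Z$-trajectories, so each $\overline{U}$ is a $Z$-invariant exact symplectic manifold-with-boundary whose infinite end is accordingly modelled on the symplectization of a contact manifold with boundary. It remains to exhibit the function $I$ from the definition of Liouville sector: on the disjoint neighborhoods $\mathcal{N}_{0,\pm}$ we already have $I=\mathrm{Im}(z_1)$ (pulled back along $\Psi_t$), and for each piece I would restrict $\pm I$ to its boundary components, the overall sign chosen on each side of a given $H_{0,\pm}$ so that the characteristic foliation is cooriented by $\omega(N,\cdot)>0$ for inward $N$, as in the definition. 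Then Lemma~\ref{lemmai1} gives $ZI=\tfrac32 I$ outside a compact set (linearity at infinity with constant $\alpha=\tfrac32>0$), Lemma~\ref{lemmai2} gives $dI|_{\mathrm{char.fol.}}>0$, and $dI\neq 0$ along $H_{0,\pm}$ identifies $\partial\overline{U}\cong I^{-1}(0)\times\mathbb{R}$. This verifies the axioms, so $U_{-,-}$, $U_{-,+}$, $U_{+,+}$ are Liouville sectors (indeed saddle sectors, as $\alpha=\tfrac32>1$). One may further note that $\{H_{0,-},H_{0,+}\}$ is a sectorial collection in the sense of Definition~\ref{sector_covers}: condition~(1) is Lemma~\ref{lemmai2}, and conditions~(2),(3) are vacuous because $\mathcal{N}_{0,-}\cap\mathcal{N}_{0,+}=\varnothing$.

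I expect the main obstacle to be the behavior at infinity rather than any single computation: one must argue carefully that the closures of the stable manifolds are honest manifolds-with-boundary whose ideal boundaries fit the ``contact manifold with boundary'' model, and that the locally defined functions $I$ extend to the whole relevant boundary with the correct coorientation on each side of $H_{0,\pm}$. Both points are controlled by the explicit exponential form \eqref{exponential flow} of the flow for large time and by the smoothness and evenness of $c$ (Lemma~\ref{funcc}), so the remaining work is in organizing these ingredients cleanly rather than in proving anything genuinely new.
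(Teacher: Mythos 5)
Your proposal is correct and takes essentially the same approach as the paper, whose entire proof consists of citing Lemmas~\ref{lemmai1}, \ref{lemmai2}, and \ref{lemmai3}. You additionally spell out the set-theoretic decomposition (via Lemma~\ref{lemma2}, Lemma~\ref{funcc}, and Proposition~\ref{hyperchar}) and the coorientation sign of $I$ on each side of $H_{0,\pm}$, details the paper leaves implicit.
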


\begin{proof}
It follows from lemma \ref{lemmai1}, lemma \ref{lemmai2}, and lemma \ref{lemmai3}.
\end{proof}

\subsection{Visualization of sectorial hypersurfaces}
Proposition \ref{hyperchar} gives us the characterization of hypersurfaces $H_{+}$ and $H_{-}$.

If we fix Im$(z_1)=$Im$(z_2)=a \in \R$, we can visualize the two hypersurfaces as in the first picture. If we fix $z_1= b \in \mathbb{R}$, we can draw the $z_2$ plane as follows. 
\vspace{-1cm}{
   \fontsize{9pt}{10pt}\selectfont
   \def\svgwidth{5.8in}
   \begin{center}
\begingroup%
  \makeatletter%
  \providecommand\color[2][]{%
    \errmessage{(Inkscape) Color is used for the text in Inkscape, but the package 'color.sty' is not loaded}%
    \renewcommand\color[2][]{}%
  }%
  \providecommand\transparent[1]{%
    \errmessage{(Inkscape) Transparency is used (non-zero) for the text in Inkscape, but the package 'transparent.sty' is not loaded}%
    \renewcommand\transparent[1]{}%
  }%
  \providecommand\rotatebox[2]{#2}%
  \newcommand*\fsize{\dimexpr\f@size pt\relax}%
  \newcommand*\lineheight[1]{\fontsize{\fsize}{#1\fsize}\selectfont}%
  \ifx\svgwidth\undefined%
    \setlength{\unitlength}{595.27559055bp}%
    \ifx\svgscale\undefined%
      \relax%
    \else%
      \setlength{\unitlength}{\unitlength * \real{\svgscale}}%
    \fi%
  \else%
    \setlength{\unitlength}{\svgwidth}%
  \fi%
  \global\let\svgwidth\undefined%
  \global\let\svgscale\undefined%
  \makeatother%
  \begin{picture}(1,1.41428571)%
    \lineheight{1}%
    \setlength\tabcolsep{0pt}%
    \put(0,0){\includegraphics[width=\unitlength,page=1]{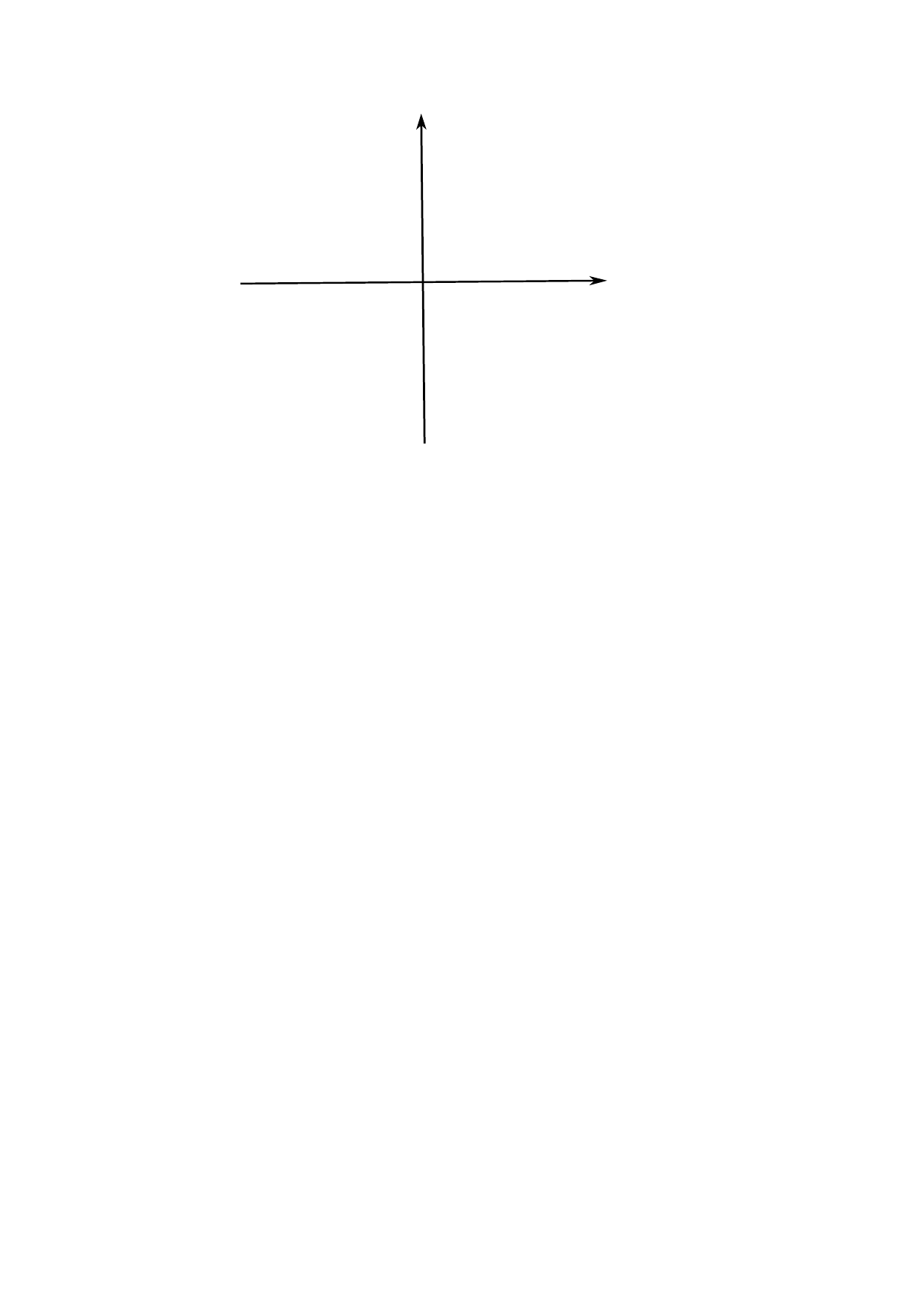}}%
    \put(0.46129919,1.29707202){\color[rgb]{0,0,0}\makebox(0,0)[lt]{\lineheight{1.25}\smash{\begin{tabular}[t]{l}Re($z_2)$\end{tabular}}}}%
    \put(0.64846293,1.12433857){\color[rgb]{0,0,0}\makebox(0,0)[lt]{\lineheight{1.25}\smash{\begin{tabular}[t]{l}Re($z_1$)\end{tabular}}}}%
    \put(0,0){\includegraphics[width=\unitlength,page=2]{drawing-3.pdf}}%
    \put(0.27525551,1.03060379){\color[rgb]{0,0,0}\makebox(0,0)[lt]{\lineheight{1.25}\smash{\begin{tabular}[t]{l}$ U_{-,-}$\end{tabular}}}}%
    \put(0.48008392,1.07993959){\color[rgb]{0,0,0}\makebox(0,0)[lt]{\lineheight{1.25}\smash{\begin{tabular}[t]{l}$ U_{-,+}$\end{tabular}}}}%
    \put(0.57886311,1.18668689){\color[rgb]{0,0,0}\makebox(0,0)[lt]{\lineheight{1.25}\smash{\begin{tabular}[t]{l}$ U_{+,+}$\end{tabular}}}}%
    \put(0,0){\includegraphics[width=\unitlength,page=3]{drawing-3.pdf}}%
    \put(0.07001351,1.23780623){\color[rgb]{0.20784314,0.12941176,0.86666667}\makebox(0,0)[lt]{\lineheight{1.25}\smash{\begin{tabular}[t]{l}Im$(z_1) = $Im$(z_2) = a$\end{tabular}}}}%
    \put(0.47248418,0.98064188){\color[rgb]{0.20784314,0.12941176,0.86666667}\makebox(0,0)[lt]{\lineheight{1.25}\smash{\begin{tabular}[t]{l}$H_{0,-}$\end{tabular}}}}%
    \put(0.4653073,1.24778726){\color[rgb]{0.20784314,0.12941176,0.86666667}\makebox(0,0)[lt]{\lineheight{1.25}\smash{\begin{tabular}[t]{l}$H_{0,+}$\end{tabular}}}}%
    \put(0.18641027,0.72188847){\color[rgb]{0,0,0}\makebox(0,0)[lt]{\lineheight{1.25}\smash{\begin{tabular}[t]{l}$ U_{-,-}$\end{tabular}}}}%
    \put(0.28457735,0.78518873){\color[rgb]{0,0,0}\makebox(0,0)[lt]{\lineheight{1.25}\smash{\begin{tabular}[t]{l}$ U_{-,+}$\end{tabular}}}}%
    \put(0.58254227,0.79046922){\color[rgb]{0,0,0}\makebox(0,0)[lt]{\lineheight{1.25}\smash{\begin{tabular}[t]{l}$ U_{-,+}$\end{tabular}}}}%
    \put(0.77908961,0.72621416){\color[rgb]{0,0,0}\makebox(0,0)[lt]{\lineheight{1.25}\smash{\begin{tabular}[t]{l}$ U_{-,+}$\end{tabular}}}}%
    \put(0.68645082,0.72179764){\color[rgb]{0,0,0}\makebox(0,0)[lt]{\lineheight{1.25}\smash{\begin{tabular}[t]{l}$ U_{+,+}$\end{tabular}}}}%
    \put(0.87757377,0.72792684){\color[rgb]{0,0,0}\makebox(0,0)[lt]{\lineheight{1.25}\smash{\begin{tabular}[t]{l}$ U_{+,+}$\end{tabular}}}}%
    \put(-0.00010448,0.73276766){\color[rgb]{0,0,0}\makebox(0,0)[lt]{\lineheight{1.25}\smash{\begin{tabular}[t]{l}$ U_{-,-}$\end{tabular}}}}%
    \put(0,0){\includegraphics[width=\unitlength,page=4]{drawing-3.pdf}}%
    \put(0.44982485,0.86155686){\color[rgb]{0,0,0}\makebox(0,0)[lt]{\lineheight{1.25}\smash{\begin{tabular}[t]{l}$U_{-,+}$\end{tabular}}}}%
    \put(0.44435558,0.66184325){\color[rgb]{0.12941176,0.11372549,0.94901961}\makebox(0,0)[lt]{\lineheight{1.25}\smash{\begin{tabular}[t]{l}$z_2$ plane\\for $b=0$\\\end{tabular}}}}%
    \put(0,0){\includegraphics[width=\unitlength,page=5]{drawing-3.pdf}}%
    \put(0.24622648,0.66022671){\color[rgb]{0.12941176,0.11372549,0.94901961}\makebox(0,0)[lt]{\lineheight{1.25}\smash{\begin{tabular}[t]{l}$z_2$ plane \\for $b \in [ -\epsilon,0)$\\\end{tabular}}}}%
    \put(0.19669412,0.81636414){\color[rgb]{0,0,1}\makebox(0,0)[lt]{\lineheight{1.25}\smash{\begin{tabular}[t]{l}$H_{0,-}$\end{tabular}}}}%
    \put(0,0){\includegraphics[width=\unitlength,page=6]{drawing-3.pdf}}%
    \put(0.00650495,0.81417795){\color[rgb]{0,0,1}\makebox(0,0)[lt]{\lineheight{1.25}\smash{\begin{tabular}[t]{l}$H_{0,-}$\end{tabular}}}}%
    \put(0,0){\includegraphics[width=\unitlength,page=7]{drawing-3.pdf}}%
    \put(0.82316055,0.66623031){\color[rgb]{0.12941176,0.11372549,0.94901961}\makebox(0,0)[lt]{\lineheight{1.25}\smash{\begin{tabular}[t]{l}$z_2$ plane \\for $ b \in (\epsilon,\infty)$\\\end{tabular}}}}%
    \put(0,0){\includegraphics[width=\unitlength,page=8]{drawing-3.pdf}}%
    \put(0.62708716,0.66513797){\color[rgb]{0.12941176,0.11372549,0.94901961}\makebox(0,0)[lt]{\lineheight{1.25}\smash{\begin{tabular}[t]{l}$z_2$ plane\\for $b \in (0,\epsilon]$\\\end{tabular}}}}%
    \put(0.67129284,0.80733141){\color[rgb]{0,0,1}\makebox(0,0)[lt]{\lineheight{1.25}\smash{\begin{tabular}[t]{l}$H_{0,+}$\end{tabular}}}}%
    \put(0.85851673,0.79410938){\color[rgb]{0,0,1}\makebox(0,0)[lt]{\lineheight{1.25}\smash{\begin{tabular}[t]{l}$H_{0,+}$\end{tabular}}}}%
    \put(0.09873244,0.78291068){\color[rgb]{0,0,0}\makebox(0,0)[lt]{\lineheight{1.25}\smash{\begin{tabular}[t]{l}$ U_{-,+}$\end{tabular}}}}%
    \put(0.37118923,0.79067401){\color[rgb]{0,0,1}\makebox(0,0)[lt]{\lineheight{1.25}\smash{\begin{tabular}[t]{l}$H_{0,-}$\end{tabular}}}}%
    \put(0.4858419,0.79067401){\color[rgb]{0,0,1}\makebox(0,0)[lt]{\lineheight{1.25}\smash{\begin{tabular}[t]{l}$H_{0,+}$\end{tabular}}}}%
    \put(0.00049479,0.65613733){\color[rgb]{0,0,1}\makebox(0,0)[lt]{\lineheight{1.25}\smash{\begin{tabular}[t]{l}$z_2$ plane\\for $ b \in (-\infty, -\epsilon)$\\\end{tabular}}}}%
  \end{picture}%
\endgroup%

   \end{center}
}
\vspace{-9cm}

\begin{corollary}
\label{sym2}
    We have $U_{-,-}\subset Sym^2({Re(z)\leq 0})$ and $U_{-,-}$ is a deformation retract of $Sym^2({Re(z)\leq 0})$.
\end{corollary}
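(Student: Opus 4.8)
The plan is to describe both spaces explicitly as subsets of $Sym^2(\mathbb{C})\cong\mathbb{C}^2$ in the $(z,w)$-coordinates of Section~3 and then build a strong deformation retraction by hand. By Proposition~\ref{hyperchar} and the preceding discussion of $x_i=\mathrm{Re}(z_0)\pm c(\sqrt{w_0})$, the closed sector is $U_{-,-}=\{\mathrm{Re}(z_0)+c(\sqrt{w_0})\le 0\}$ with sectorial boundary $H_{0,-}=\{\mathrm{Re}(z_0)+c(\sqrt{w_0})=0\}$. Writing an unordered pair as $\{z_0+\delta,\,z_0-\delta\}$ with $\delta$ either square root of $w_0$, the two conditions $\mathrm{Re}(z_0\pm\delta)\le 0$ are jointly equivalent to $\mathrm{Re}(z_0)+|\mathrm{Re}(\delta)|\le 0$, and $|\mathrm{Re}(\delta)|=|\mathrm{Re}(\sqrt{w_0})|$ is branch-independent, so $Sym^2(\{\mathrm{Re}(z)\le 0\})=\{\mathrm{Re}(z_0)+|\mathrm{Re}(\sqrt{w_0})|\le 0\}$. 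The inequality $|\mathrm{Re}|\le c$ from Lemma~\ref{funcc} then immediately gives the inclusion $U_{-,-}\subset Sym^2(\{\mathrm{Re}(z)\le 0\})$: on $U_{-,-}$ one has $\mathrm{Re}(z_0)+|\mathrm{Re}(\sqrt{w_0})|\le \mathrm{Re}(z_0)+c(\sqrt{w_0})\le 0$, and the same estimate shows $H_{0,-}$ lies in $Sym^2(\{\mathrm{Re}(z)\le 0\})$, so the closed sector is a closed subset.

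The natural first guess --- retracting along the downward gradient flow of $\widetilde{\Phi}$ --- does \emph{not} work: by Lemma~\ref{lemma2} a point with $\mathrm{Re}(z_0)+c(\sqrt{w_0})>0$ satisfies $\mathrm{Re}(z_i(t))=e^{t/2}(\mathrm{Re}(z_0)+s(t))\to+\infty$ (since $s(t)\to c(\sqrt{w_0})$), so that flow pushes part of $Sym^2(\{\mathrm{Re}(z)\le 0\})$ out through its boundary --- precisely because of the diagonal smoothing, where $c$ can strictly exceed $|\mathrm{Re}|$. Instead I would translate the whole configuration leftward by a common real amount. Let $h(\{z_1,z_2\}):=\max\{\mathrm{Re}(z_0)+c(\sqrt{w_0}),\,0\}$, a well-defined continuous $[0,\infty)$-valued function on $Sym^2(\mathbb{C})$ ($c$ is even, so $h$ is branch-independent; $c$ is smooth by Lemma~\ref{funcc}, so $h$ is continuous) which vanishes precisely on $U_{-,-}$, and set $r_s(\{z_1,z_2\}):=\{z_1-s\,h,\ z_2-s\,h\}$ for $s\in[0,1]$, a jointly continuous family of self-maps of $Sym^2(\mathbb{C})$.

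The required properties are then elementary: $r_0=\mathrm{id}$; since $h\equiv 0$ on $U_{-,-}$, $r_s$ is the identity on $U_{-,-}$ for every $s$; the homotopy preserves $Sym^2(\{\mathrm{Re}(z)\le 0\})$ because $\mathrm{Re}(z_i-sh)=\mathrm{Re}(z_i)-sh\le \mathrm{Re}(z_i)\le 0$; and $r_1$ lands in $U_{-,-}$ since translating by $-h$ fixes $w_0$ (hence $c(\sqrt{w_0})$) while sending $\mathrm{Re}(z_0)$ to $\mathrm{Re}(z_0)-h\le -c(\sqrt{w_0})$. Hence $r_1\colon Sym^2(\{\mathrm{Re}(z)\le 0\})\to U_{-,-}$ is a retraction and $\{r_s\}$ a strong deformation retraction, which proves the corollary.

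The only point requiring genuine care --- and the main, if modest, obstacle --- is the bookkeeping around the smoothed region $\{|\mathrm{Re}(\sqrt{w_0})|<\epsilon\}$: one must make sure that ``$U_{-,-}$'' is understood as the closed Liouville sector $U_{-,-}\cup H_{0,-}$ and that the estimates of Lemma~\ref{funcc} are applied with the correct signs, since this is exactly what makes both the inclusion and the ``$r_1$ lands in $U_{-,-}$'' step go through. If a retraction compatible with more of the symplectic structure were wanted, one could first run the gradient flow of $\widetilde{\Phi}$ for a bounded time to push the problematic near-diagonal locus into $V_-$ and then apply the translation; but for the stated topological claim the elementary homotopy above already suffices.
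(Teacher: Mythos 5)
The paper states this result as a corollary of Proposition~\ref{hyperchar} and the visualization discussion but does not supply an explicit proof, so there is no argument of record to compare against. Your proof is correct and fills that gap cleanly. The inclusion step is exactly what the paper's setup suggests: rewriting $Sym^2(\{\mathrm{Re}(z)\le 0\})$ as $\{\mathrm{Re}(z_0)+|\mathrm{Re}(\sqrt{w_0})|\le 0\}$ and invoking $|\mathrm{Re}|\le c$ from Lemma~\ref{funcc}. The retraction step --- a real translation by $h=\max\{\mathrm{Re}(z_0)+c(\sqrt{w_0}),0\}$, which is well-defined since $c$ is even and continuous since $c$ is smooth --- is an elementary, branch-safe way to produce a strong deformation retraction; it verifiably preserves $Sym^2(\{\mathrm{Re}(z)\le 0\})$ (since $h\ge 0$), is the identity on the closed sector $\{\mathrm{Re}(z_0)+c(\sqrt{w_0})\le 0\}$, and lands there at time $1$ because translation fixes $w_0$. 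Your side remark that the downward gradient flow of $\widetilde\Phi$ would not by itself give the retraction is also correct: for a point with $\mathrm{Re}(z_0)+c(\sqrt{w_0})>0$ (which can occur inside $Sym^2(\{\mathrm{Re}(z)\le 0\})$ only in the smoothed strip $|\mathrm{Re}(\sqrt{w_0})|<\epsilon$), one coordinate $\mathrm{Re}(z_i(t))=e^{t/2}(\mathrm{Re}(z_0)+s(t))$ eventually becomes positive, so the flow leaks through the boundary. Your translation argument sidesteps this and, moreover, automatically fixes the retract pointwise, which the flow would not.

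One small point worth making explicit if this were written up: the statement implicitly takes $U_{-,-}$ to mean the \emph{closed} sector (the stable manifold together with $H_{0,-}$), consistent with how the $U_{m_i,m_j}$ are defined as closures later in the paper; your proof uses this reading, and it is the one under which the deformation retract claim makes sense.
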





\section{Existence of a quadratic Stein structure}

\begin{defn}
    Let $\Sigma $ be a Riemann surface and $\varphi$ be a proper plurisubharmonic function on $\Sigma$. Let $\{s_i\}_{i\in I}$ be the set of saddles of $\varphi$ and  $\mathcal{N}(\gamma_i)$ be the tubular neighborhood of the stable manifold $\gamma_i$ of the saddle $s_i$.\\
    We say $(\Sigma, \varphi)$ is a Riemann surface with a quadratic Stein structure if $\varphi |_{\mathcal{N}(\gamma_i)}$ is quadratic in local coordinates.
\end{defn}


\begin{proposition}
\label{global}
    For any  non compact topological surface $\Sigma$ with disjoint proper embedded arcs $\{\gamma_i \}, i \in I$,
    we can build a quadratic Stein structure with one saddle $s_i$ (and possibly other saddles as well)
    and on each $\gamma_i$ and one minimum $m_j$ 
    on each component of $\Sigma - \bigcup \gamma_i$. 
\end{proposition}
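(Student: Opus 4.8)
The plan is to build the potential $\varphi$ in three stages: first fix the Morse-theoretic skeleton forced by the arcs, then realize that skeleton by a strictly plurisubharmonic proper function, and finally normalize the function so that it is literally quadratic near each $\gamma_i$. Since $\Sigma$ is non-compact I may work one connected component at a time, and each $\gamma_i$ is then a properly embedded copy of $\R$; I choose pairwise disjoint tubular neighbourhoods $U_i\cong \R_{v_i}\times(-1,1)_{u_i}$ with $\gamma_i=\{u_i=0\}$, and I let $C_1,C_2,\dots$ be the connected components of $\Sigma\setminus\bigcup_i\gamma_i$. Each $C_j$ is an open surface, hence homotopy equivalent to a wedge of circles, so it admits a handle decomposition with a single $0$-handle and some $1$-handles; gluing the $C_j$ back along $1$-handles whose cores are the arcs $\gamma_i$ recovers $\Sigma$. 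This presents $\Sigma$ as built from one $0$-handle per component $C_j$ together with a family of $1$-handles --- one for each $\gamma_i$, plus the auxiliary handles needed to build the individual $C_j$ --- and no $2$-handles (this is exactly where non-compactness of $\Sigma$ enters). From this handle presentation I obtain an exhausting Morse function $f\colon\Sigma\to[0,\infty)$, Lyapunov for a gradient-like vector field, with one minimum $m_j$ in each $C_j$ and one index-one critical point $s_i$ on each $\gamma_i$ whose stable manifold (for the downward flow, in the sense used in Section~3) is precisely $\gamma_i$.

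Second, I would upgrade $f$ to a genuine Stein potential. Fix a complex structure on $\Sigma$ which on each $U_i$ agrees with the standard one for the coordinate $z_i=u_i+iv_i$; there is no obstruction, since orientation-compatible almost complex structures form a bundle with contractible fibres, and any such $\Sigma$ is an open Riemann surface, hence Stein by Behnke--Stein. The key analytic point is that in complex dimension one ``strictly plurisubharmonic'' means ``strictly subharmonic'', a condition that is open in the $C^2$-topology, stable under addition of harmonic functions, and preserved under the regularized maximum. One can therefore perform each handle attachment analytically, using the explicit local models $\varphi=|z|^2$ near a minimum and $\varphi=\tfrac{1-\alpha}{2}u^2+\tfrac{\alpha}{2}v^2$ (which is strictly subharmonic for every $\alpha>1$, its complex Hessian being a positive constant as in Section~3) near a saddle, gluing successive pieces over their overlaps with a regularized maximum and extending over the remaining collar regions (which carry no further handles) so that the function increases to $+\infty$. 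The result is a proper strictly plurisubharmonic Morse exhaustion $\varphi_0$ with the same critical points as $f$, and with each $\gamma_i$ still realized as the stable manifold of the saddle $s_i$. In complex dimension one this realization step is elementary; in higher dimensions it is the content of the Stein side of the Weinstein--Stein correspondence.

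Third, to make the structure \emph{quadratic} I would replace $\varphi_0$, inside a small disk $D_i\subset U_i$ about each $s_i$, by the model $\tfrac{1-\alpha}{2}u_i^2+\tfrac{\alpha}{2}v_i^2$ outright. Since $s_i$ is a non-degenerate saddle of $\varphi_0$ whose stable and unstable directions are aligned with the $v_i$- and $u_i$-axes, a preliminary change of holomorphic coordinate on $U_i$ brings the $2$-jet of $\varphi_0$ at $s_i$ into agreement with the model, after which the interpolation between the model on $D_i$ and $\varphi_0$ outside a slightly larger disk can again be carried out with a regularized maximum: this preserves strict subharmonicity, keeps $\{u_i=0\}$ as the stable manifold of $s_i$, and creates no new critical points in $U_i$. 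Declaring $\mathcal{N}(\gamma_i)$ to be the region on which $\varphi$ is literally the model makes $(\Sigma,\varphi)$ a quadratic Stein structure, and a final $C^\infty$-small perturbation supported away from all of the prescribed critical points makes $\varphi$ Morse everywhere, completing the proof.

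I expect the main obstacle to be the analytic bookkeeping of the last two stages: one must arrange, simultaneously over all the arcs, that $\varphi$ equals the quadratic model on a full neighbourhood of each $\gamma_i$ with $\gamma_i$ as the stable manifold, that $\varphi$ stays strictly plurisubharmonic across every gluing region, and that $\varphi$ remains a proper Morse exhaustion with exactly one minimum in each $C_j$ and no spurious critical points. The topological input --- a handle decomposition with one $0$-handle per $C_j$ and $1$-handle cores along the $\gamma_i$ --- is routine surface topology; the real content lies in threading it through the Stein condition, and it is the one-complex-dimensional flexibility of subharmonic functions (closure under regularized maximum, stability under harmonic perturbations) that makes this feasible.
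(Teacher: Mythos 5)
Your overall strategy --- first build a Morse function with the prescribed critical points, then realize it as a strictly subharmonic exhaustion, then normalize near each saddle --- is quite different from what the paper does. The paper instead \emph{constructs} $\Sigma$ as a union of explicit polygonal building blocks $D_n=\{z: z^n\in D_1\}$ (after first adding more arcs so that the complement of the arcs is a union of polygons), with the potential $\varphi_n(z)=\varphi_1(z^n)$ on each piece already quadratic in the unbounded gluing regions $\{\mathrm{Re}(z^n)>\tfrac{2}{3}\}$; the pieces are then glued via the biholomorphism $z^{n_i}\mapsto 2-z^{n_k}$, and the potentials glue automatically. In that approach the quadratic model on a neighbourhood of the \emph{entire} arc is built in from the start, rather than installed afterwards.

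The gap in your proposal is precisely in the third stage. The definition of a quadratic Stein structure requires $\varphi$ to be quadratic on a tubular neighbourhood $\mathcal{N}(\gamma_i)$ of the \emph{whole} stable manifold $\gamma_i$, which is a properly embedded copy of $\R$; $\mathcal{N}(\gamma_i)$ is therefore a non-compact strip, not a disk. You only arrange agreement with the model inside a small compact disk $D_i$ about $s_i$, and then \emph{declare} $\mathcal{N}(\gamma_i):=D_i$. But $D_i$ is not a tubular neighbourhood of $\gamma_i$, so the resulting structure does not satisfy the definition. This is not a cosmetic issue: the subsequent construction in Section~5 makes essential use of the fact that the Stein structure on an entire band $\Gamma^{2\epsilon}_{s_i}$ around $\gamma_i$ is identified with the local model of Section~3, so that the hypersurfaces $H_{s_i}$ and the $I$-functions can be imported from that computation uniformly along the arc, all the way out to infinity. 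A potential that agrees with the quadratic model only near $s_i$ would leave the behaviour of $H_{s_i}$ and $I_i$ uncontrolled in the non-compact directions.

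Repairing this by ``interpolating to the model along the whole arc'' is delicate: you would have to modify $\varphi_0$ on an unbounded region while preserving strict subharmonicity, properness, and the absence of extra critical points, and you would also need the $2$-jet of $\varphi_0$ to match the model not only at $s_i$ but compatibly along the entire strip so that the interpolation closes up at infinity. It is much cleaner (and this is what the paper does) to reverse the order of operations: \emph{start} with the quadratic model on the strips $U_i$, treat the components $C_j$ as polygons $D_{n_j}$ carrying the explicit pulled-back potential, and glue. The handle-theoretic input you use in your first stage is then essentially the same observation that the complement of the arcs can be arranged to be a union of polygons; the difference is that the paper never has to repair the Stein potential after the fact.
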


\begin{proof}
First, given $\Sigma$ and disjoint proper embedded arcs, we can add more nonseparating arcs to make the complement of arcs union of polygons. 

We let $D_1:=\{ z \in \mathbb{C}| Re(z) \leq \frac{4}{3}\}$ and let $D_n:=\{ z \in \mathbb{C}|z^n \in D_1\}$. 

We want to define a single smooth function $f:\mathbb{C} \to \mathbb{R}$ piecewisely. We let $f=f_1$ on $x<\frac{1}{3}$ and $f_2$ on $x>\frac{2}{3}$, where $f_2(x+yi)= \frac{1-\alpha}{2}(x-1)^2+\frac{\alpha}{2}y^2$ has a saddle at $1$, and $f_1$ goes to infinity if $Re(z)$ goes to negative infinity, has a minimum at $0$, $f_1(z=x+iy)=\frac{\alpha}{2}x^2+\frac{\alpha}{2}y^2-C $ for some constant $C$. 
To make sure that $f$ is plurisubharmonic, we also require $f_{xx}\geq 1-\alpha$ to hold everywhere. This is suitable for a choice of constant $C$.

We let $\varphi_1 =f|_{D_1}$. Let $\varphi_n(z):= \varphi_1(z^n)$ except in a neighborhood of $0$ and in the neighborhood of $0$, be a convex nondegenerate function of $|z|$. $\varphi_n$ defines a Stein structure on $D_n$ which is quadratic in the $n$ regions where $Re(z^n)>\frac{2}{3}$.

Since we added enough arcs to make the complement of arcs as a union of polygons, we can build $\Sigma$ from a union of domain $\bigcup D_{n_j}$ by gluing them to each other by identifying the quadratic regions via the biholomorphic map $z^{n_i} \mapsto 2-z^{n_k}$. And the functions $\varphi_{n_j}$ glue together to determine the Stein structure on $\Sigma$.
\end{proof}

\begin{defn} \label{D_1minus}
    $D_1^-:=\{ z \in \mathbb{C}| Re(z) \leq 1\}$ with a saddle sector structure given by restriction of the quadratic Stein structure of $D_1$.
\end{defn}

\begin{remark}
    If $\Sigma$ has non-empty boundary, the proposition also holds as long as there are no boundary component homeomorphic to $S^1$.
    \label{withbdry}
\end{remark}

\section{Construction of sectorial decomposition}
\subsection{Statement and notations}
\begin{theorem} \label{mainthm}
Let $(\Sigma,\varphi)$ be a Riemann surface with a quadratic Stein structure and let $\{s_i\}$ the set of saddles and $\{m_j\}$ the set of minimum. This determines a sectorial collection of hypersurfaces $H_{s_i}$ in $Sym^2(\Sigma)$, which decompose $Sym^2(\Sigma)$ into a union of sectors with corners $U_{m_i,m_j}$.
\end{theorem}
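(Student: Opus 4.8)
The plan is to construct on $\Sym^2(\Sigma)$ an explicit plurisubharmonic exhaustion obtained by ``doubling'' $\varphi$, to read off its critical points and Liouville dynamics, to take the $H_{s_i}$ to be the stable manifolds of its index-one critical points, and to verify the sectorial axioms by reducing the analysis near the diagonal to the local computation of Section~3. Concretely, I would first set $\Phi(\{p,q\}):=\varphi(p)+\varphi(q)$ on $\Sym^2(\Sigma)$. Away from the diagonal $\Delta$ this is a proper plurisubharmonic function, but along $\Delta$ it has a conical singularity, precisely of the $|w|$-type appearing in Section~3. Using the quadratic local coordinates near each arc $\gamma_i$ supplied by Proposition~\ref{global} together with a fixed smoothing of the cone (convolution, as in the local model), I would regularize $\Phi$ in a small neighbourhood of $\Delta$ to a smooth, proper plurisubharmonic exhaustion $\widetilde{\Phi}$, which equips $\Sym^2(\Sigma)$ with a Stein/Weinstein structure $(\omega_{\widetilde{\Phi}},Z,\widetilde{\Phi})$. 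The role of the quadratic hypothesis is exactly that over a neighbourhood of the part of $\Delta$ lying above a saddle $s_i$, $\widetilde{\Phi}$ is \emph{literally} the model potential of Section~3 (with $\alpha=\tfrac32$, up to a deformation of constants), so its behaviour there is already fully computed.

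Next I would analyze the critical points and the downward gradient flow $\Psi_t$ of $\widetilde{\Phi}$. Away from $\Delta$ the structure is a deformation of a product, so $Z=Z_\varphi\oplus Z_\varphi$, the flow decouples, and $\mathrm{Crit}(\widetilde{\Phi})\setminus\Delta=\{\{c,c'\}:c,c'\in\mathrm{Crit}(\varphi)\}$ with Morse index $\mathrm{ind}(c)+\mathrm{ind}(c')$: the minima $\{m_i,m_j\}$ (index $0$), the index-one points $\{s_i,m_j\}$, and the index-two points $\{s_i,s_j\}$ with $i\neq j$; by the local model $\widetilde{\Phi}$ has no critical points near $\Delta$ other than the nondegenerate minima $\{m_i,m_i\}$, the would-be point $\{s_i,s_i\}$ being destroyed by the regularization. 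Since $\widetilde{\Phi}$ is a proper Morse exhaustion bounded below, every point flows under $\Psi_t$ to a critical point, and I would define $U_{m_i,m_j}$ to be the closure of the basin of $\{m_i,m_j\}$ and $H_{s_i}$ to be the closure of the union over $j$ of the stable manifolds (for $\Psi_t$) of the index-one critical points $\{s_i,m_j\}$. Away from $\Delta$ this is exactly the incidence locus $\{\{p,q\}:p\in\gamma_i\}$, while across $\Delta$ it is resolved --- since the singular incidence locus self-intersects along $\Sym^2(\gamma_i)$ and passes through the conical locus of $\Delta$ --- into the disjoint smooth cylindrical sheets $H_{0,\pm}$ of Section~3. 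The corner strata $H_{s_i}\cap H_{s_k}$, $i\neq k$, are then the stable manifolds of $\{s_i,s_k\}$.

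Finally I would verify the conditions of Definition~\ref{sector_covers}. Tangency of $Z$ to each $H_{s_i}$ and cylindricity hold because $H_{s_i}$ is a union of stable manifolds of the Liouville flow; smoothness away from $\Delta$ is clear since $\gamma_i$ is an embedded arc, and smoothness across $\Delta$, together with functions $I_{s_i}$ that are linear at infinity and satisfy $dI_{s_i}|_{\mathrm{char.fol.}(H_{s_i})}\neq0$ and $dI_{s_i}|_{\mathrm{char.fol.}(H_{s_k})}=0$ for $k\neq i$, is precisely what Section~3 delivers (Proposition~\ref{hyperchar} and Lemmas~\ref{escaping lemma},~\ref{funcc},~\ref{lemmai1},~\ref{lemmai2},~\ref{lemmai3}, applied in the quadratic coordinates near each $s_i$; away from $\Delta$ the $I_{s_i}$ are the corresponding product functions, the one-dimensional instance being the statement that the stable manifold of an index-one Stein critical point is a sectorial hypersurface). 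Because distinct arcs $\gamma_i,\gamma_k$ are disjoint, near $H_{s_i}\cap H_{s_k}$ a configuration has one point near $\gamma_i$ and the other near $\gamma_k$, so the geometry is a product of two one-dimensional saddle models; there the $\omega$-orthogonality of characteristic foliations over the intersection, the commutation $\{I_{s_i},I_{s_k}\}=0$, and the ``sector with corners'' local model from the Remark after Definition~\ref{sector_covers} are immediate. This exhibits $\{H_{s_i}\}$ as a sectorial collection and each $U_{m_i,m_j}$ as a sector with corners.

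I expect the main obstacle to be everything concentrated at the diagonal: establishing that $\widetilde{\Phi}$ is genuinely smooth, plurisubharmonic, and has exactly the claimed critical points, and --- the delicate point --- that the incidence hypersurface ``one point on $\gamma_i$'', which before regularization both self-intersects along $\Sym^2(\gamma_i)$ and runs through the conical locus of $\Delta$, becomes after regularization a disjoint union of \emph{smooth} cylindrical sectorial hypersurfaces. This is exactly the content of the Section~3 local model, so the real work of the proof is to globalize that computation via the quadratic coordinates of Section~4 and to patch the local hypersurfaces, corner loci, and $I$-functions together coherently across $\Sym^2(\Sigma)$.
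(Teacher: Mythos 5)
Your proposal follows essentially the same route as the paper's proof: build the doubled potential $\varphi(z_1)+\varphi(z_2)$ on $\Sym^2(\Sigma)$, smooth it near the diagonal so that over each $\Gamma^{2\epsilon}_{s_i}$ it agrees with the Section~3 local model, take $H_{s_i}$ (and the corner strata $C_{s_i,s_j}$) to be stable manifolds of the index-one (resp. index-two) critical points, construct the $I_i$-functions by pulling $\mathrm{Im}(z_1)$ back along the flow from $V_{s_i}$, and verify the three sectorial-collection axioms by reducing to the local computations of Lemmas~\ref{lemmai1}--\ref{lemmai3} and to the product structure near $H_{s_i}\cap H_{s_j}$ coming from disjointness of the arcs. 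This matches the paper's argument in structure and substance.
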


Let $\gamma_{s_i}$ be the stable manifold of the saddle $s_i$ and let $\Gamma^{2\epsilon}_{s_i}$ be the neighborhood band of $\gamma_{s_i}$, which is identified with $\{-2\epsilon <Re(z) <2\epsilon\}$. Let $D_{m_j}$ be the components of $\Sigma - \cup \gamma_{s_i}$.

We want to define $\widetilde{\Phi}$ on $Sym^2(\Sigma)$. 
Away from the diagonal, we let it be $\varphi(z_1)+\varphi(z_2)$ and smoothing of this near the diagonal. When both of $z_1, z_2 \in \Gamma^{2\epsilon}_{s_i}$, we can choose the smoothing to be identical to the local model in Section $2$.

The critical points of $\widetilde{\Phi}$ are $(m_i,m_j)$ (possibly $i=j$) with index $0$ and $(s_i, m_j)$ are the critical points of index $1$ and $(s_i, s_j)$ where $i\neq j$ are the critical points of index $2$. Let $U_{m_i,m_j}$ be the closure of the stable manifold of $(m_i,m_j)$, $H_{s_i, m_j}$ be the closure of the stable manifold of $(s_i, m_j)$, and $C_{s_i,s_j}$ be the stable manifold of $(s_i, s_j)$. For example, $U_{m_i,m_j}$ is the stable manifold of $(m_i,m_j)$ together with the strata $H_{s_k,m_j}$ and $H_{s_l,m_i}$ where $s_k$ is any saddle adjacent to $m_i$ and $s_l$ is any saddle adjacent to $m_j$.
 
It is clear to see that $C_{s_i,s_j}=H_{s_i} \cap H_{s_j}$ which decomposes $H_{s_i}$ into the union of $H_{s_i,m_j}$, where $H_{s_i}=\cup H_{s_i,m_j}$ where $m_j$ ranges over all minima. Since the Stein structure is a product away from the diagonal, we can see that $C_{s_i,s_j}=\gamma_{s_i} \times \gamma_{s_j}$.

   \begin{corollary}
    Let $m$ be the number of arcs and $n$ be the number of the connected components of $\Sigma - \cup_{i\in I} \gamma_i$.
       The decomposition constructed consists of $\binom{n+1}{2}$ Liouville sectors with corners separated by $mn$ pieces of smooth hypersurfaces that meet transversely at $\binom{m}{2}$ corners.
    \end{corollary}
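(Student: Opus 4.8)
The plan is to extract the three cardinalities directly from the combinatorial data recorded in Theorem \ref{mainthm} and in the paragraphs immediately preceding this corollary, where the indices of the critical points of $\widetilde{\Phi}$ are matched to the strata of the decomposition. First I would recall that, with $m = |I|$ arcs $\gamma_i$ and $n$ connected components $D_{m_j}$ of $\Sigma - \bigcup_{i\in I}\gamma_i$, the minima of $\widetilde{\Phi}$ are the points $(m_i,m_j)$ with $i\le j$ (an unordered pair, possibly equal), so they are counted by the number of multisets of size $2$ from an $n$-element set, namely $\binom{n}{2} + n = \binom{n+1}{2}$; each such minimum gives the sector-with-corners $U_{m_i,m_j}$. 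That establishes the count $\binom{n+1}{2}$ of Liouville sectors with corners.

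Next I would count the smooth separating hypersurfaces. These are the $H_{s_i}$ of Theorem \ref{mainthm}; by the discussion above, each $H_{s_i}$ decomposes as $\bigcup_j H_{s_i,m_j}$, and the smooth pieces are exactly the closures of the stable manifolds of the index-$1$ critical points $(s_i,m_j)$. Since each arc $\gamma_i = \gamma_{s_i}$ sits in the boundary of $D_{m_j}$ for the minima on either side of it, but the point $(s_i,m_j)$ is a critical point whenever $m_j$ is \emph{any} of the $n$ minima (the relevant local picture is the product $\gamma_{s_i}\times D_{m_j}$ away from the diagonal), I would argue there are $m$ choices of saddle and $n$ choices of minimum, giving $mn$ smooth hypersurface pieces. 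Here I should be careful to check that distinct pairs $(s_i,m_j)$ really do give distinct strata and that there is no further identification forced by the diagonal; this is the one place where a small verification is needed, and I expect it to be the main (though minor) obstacle — one must confirm that the smoothing near the diagonal does not merge or split any of these strata, which follows because the local model of Section $3$ only modifies things in a compact neighborhood of the diagonal and the strata are labeled by honest critical points of $\widetilde\Phi$.

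Finally, the corners of the decomposition are the intersections $C_{s_i,s_j} = H_{s_i}\cap H_{s_j}$, which are the stable manifolds of the index-$2$ critical points $(s_i,s_j)$ with $i\ne j$ (an unordered pair of \emph{distinct} saddles, since $(s_i,s_i)$ lies on the diagonal and is excluded as a nondegenerate critical point of the smoothed $\widetilde\Phi$). Counting unordered pairs of distinct elements of an $m$-element set gives $\binom{m}{2}$ corners, and transversality of the meeting is exactly the sectorial-collection property proved in Theorem \ref{mainthm} together with the identification $C_{s_i,s_j} = \gamma_{s_i}\times\gamma_{s_j}$ noted just above. Assembling these three counts — $\binom{n+1}{2}$ sectors, $mn$ smooth hypersurfaces, $\binom{m}{2}$ corners — completes the proof. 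No hard analysis is involved; the content is bookkeeping of which critical-point indices correspond to which strata, already set up in the text.
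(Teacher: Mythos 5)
Your proposal is correct and follows exactly the paper's intended reasoning: the three counts are read off from the critical-point bookkeeping in the paragraph preceding the corollary — index-$0$ critical points $(m_i,m_j)$ with $i\le j$ give $\binom{n+1}{2}$ sectors, index-$1$ critical points $(s_i,m_j)$ give $mn$ smooth hypersurface pieces, and index-$2$ critical points $(s_i,s_j)$ with $i\neq j$ give $\binom{m}{2}$ corners, transversality being part of the sectorial-collection statement. The caveat you flag about the diagonal smoothing not merging strata is indeed the only verification to note, and it holds for the reason you give, namely that the perturbation is compactly supported near the diagonal and the strata are labeled by genuine critical points of $\widetilde{\Phi}$.
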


\begin{example} \label{3saddle}
    Let's consider a Riemann surface with a quadratic Stein structure $(\mathbb{C}, \varphi)$ such that $\varphi$ has two saddles and three minima.
\end{example}
We can have the picture as follows to see the notations we just defined in the last subsection in this example. 
\vspace{-3cm}
 {
 \fontsize{10pt}{12pt}\selectfont
   \def\svgwidth{2.5in}
   \begin{center}
\begingroup%
  \makeatletter%
  \providecommand\color[2][]{%
    \errmessage{(Inkscape) Color is used for the text in Inkscape, but the package 'color.sty' is not loaded}%
    \renewcommand\color[2][]{}%
  }%
  \providecommand\transparent[1]{%
    \errmessage{(Inkscape) Transparency is used (non-zero) for the text in Inkscape, but the package 'transparent.sty' is not loaded}%
    \renewcommand\transparent[1]{}%
  }%
  \providecommand\rotatebox[2]{#2}%
  \newcommand*\fsize{\dimexpr\f@size pt\relax}%
  \newcommand*\lineheight[1]{\fontsize{\fsize}{#1\fsize}\selectfont}%
  \ifx\svgwidth\undefined%
    \setlength{\unitlength}{431.42168661bp}%
    \ifx\svgscale\undefined%
      \relax%
    \else%
      \setlength{\unitlength}{\unitlength * \real{\svgscale}}%
    \fi%
  \else%
    \setlength{\unitlength}{\svgwidth}%
  \fi%
  \global\let\svgwidth\undefined%
  \global\let\svgscale\undefined%
  \makeatother%
  \begin{picture}(1,0.99549152)%
    \lineheight{1}%
    \setlength\tabcolsep{0pt}%
    \put(0,0){\includegraphics[width=\unitlength,page=1]{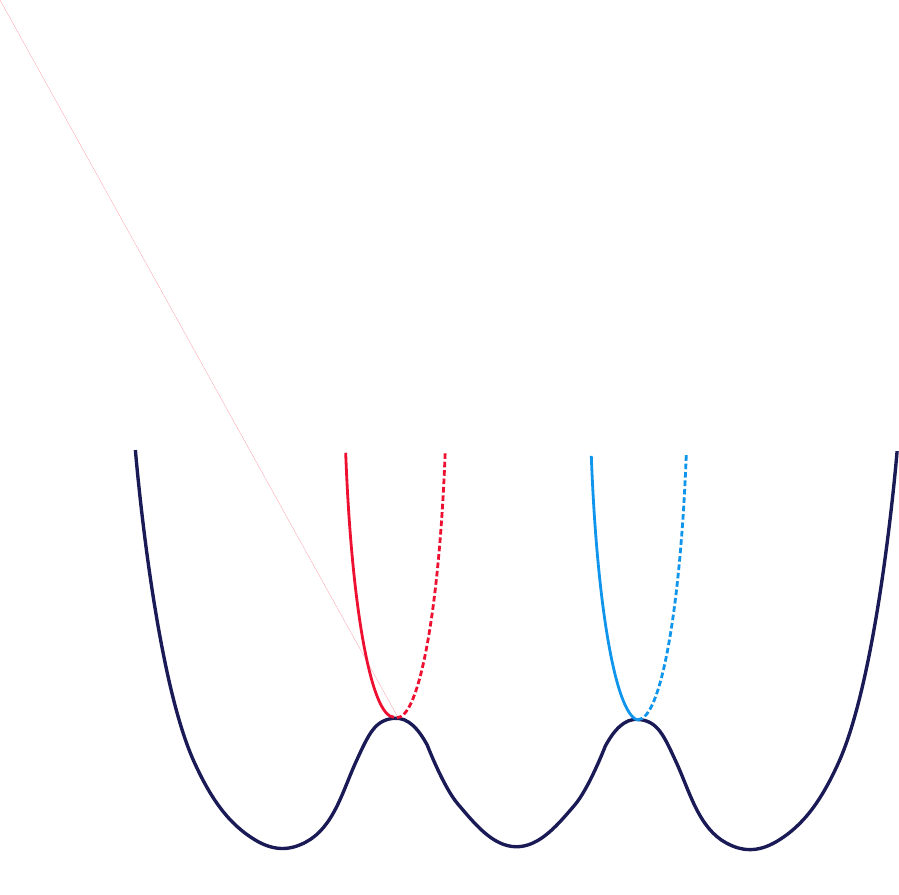}}%
    \put(0.2687232,0.00744912){\color[rgb]{0,0,0}\transparent{0}\makebox(0,0)[lt]{\lineheight{1.25}\smash{\begin{tabular}[t]{l}$m_1$\\\end{tabular}}}}%
    \put(0.52859395,0.00826061){\color[rgb]{0.50196078,0.50196078,0.50196078}\transparent{0}\makebox(0,0)[lt]{\lineheight{1.25}\smash{\begin{tabular}[t]{l}$m_2$\\\end{tabular}}}}%
    \put(0.79426981,0.00728059){\color[rgb]{0.50196078,0.50196078,0.50196078}\transparent{0}\makebox(0,0)[lt]{\lineheight{1.25}\smash{\begin{tabular}[t]{l}$m_3$\\\end{tabular}}}}%
    \put(0.42772212,0.1311735){\color[rgb]{0.50196078,0.50196078,0.50196078}\transparent{0}\makebox(0,0)[lt]{\lineheight{1.25}\smash{\begin{tabular}[t]{l}$s_1$\\\end{tabular}}}}%
    \put(0.68708692,0.13265921){\color[rgb]{0.50196078,0.50196078,0.50196078}\transparent{0}\makebox(0,0)[lt]{\lineheight{1.25}\smash{\begin{tabular}[t]{l}$s_2$\\\end{tabular}}}}%
  \end{picture}%
\endgroup%

   \end{center}
}
And we can visualize the decomposition of $Sym^2(\Sigma)$ as in the following picture.
 {
 \fontsize{10pt}{12pt}\selectfont
   \def\svgwidth{2.5in}
   \begin{center}
\begingroup%
  \makeatletter%
  \providecommand\color[2][]{%
    \errmessage{(Inkscape) Color is used for the text in Inkscape, but the package 'color.sty' is not loaded}%
    \renewcommand\color[2][]{}%
  }%
  \providecommand\transparent[1]{%
    \errmessage{(Inkscape) Transparency is used (non-zero) for the text in Inkscape, but the package 'transparent.sty' is not loaded}%
    \renewcommand\transparent[1]{}%
  }%
  \providecommand\rotatebox[2]{#2}%
  \newcommand*\fsize{\dimexpr\f@size pt\relax}%
  \newcommand*\lineheight[1]{\fontsize{\fsize}{#1\fsize}\selectfont}%
  \ifx\svgwidth\undefined%
    \setlength{\unitlength}{516.62018543bp}%
    \ifx\svgscale\undefined%
      \relax%
    \else%
      \setlength{\unitlength}{\unitlength * \real{\svgscale}}%
    \fi%
  \else%
    \setlength{\unitlength}{\svgwidth}%
  \fi%
  \global\let\svgwidth\undefined%
  \global\let\svgscale\undefined%
  \makeatother%
  \begin{picture}(1,0.67972719)%
    \lineheight{1}%
    \setlength\tabcolsep{0pt}%
    \put(0,0){\includegraphics[width=\unitlength,page=1]{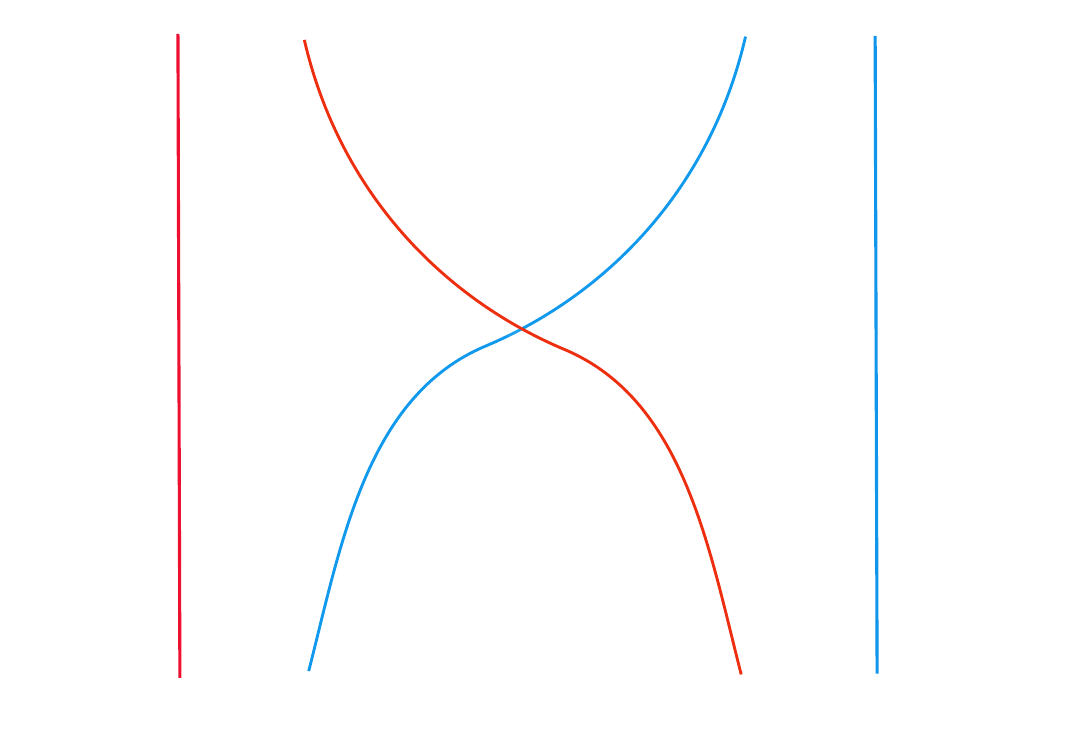}}%
    \put(0.24070853,0.39792503){\color[rgb]{0.12941176,0.0745098,0.0745098}\makebox(0,0)[lt]{\lineheight{1.25}\smash{\begin{tabular}[t]{l}$U_{m_1, m_2}$\end{tabular}}}}%
    \put(0.42548383,0.55149375){\color[rgb]{0.09019608,0.02352941,0.02352941}\makebox(0,0)[lt]{\lineheight{1.25}\smash{\begin{tabular}[t]{l}$U_{m_2, m_2}$\end{tabular}}}}%
    \put(0.44534472,0.18707254){\color[rgb]{0.11372549,0.01568627,0.01568627}\makebox(0,0)[lt]{\lineheight{1.25}\smash{\begin{tabular}[t]{l}$U_{m_1, m_3}$\end{tabular}}}}%
    \put(0.62002723,0.40735452){\color[rgb]{0.05490196,0.03137255,0.03137255}\makebox(0,0)[lt]{\lineheight{1.25}\smash{\begin{tabular}[t]{l}$U_{m_2, m_3}$\end{tabular}}}}%
    \put(-0.00076979,0.37628384){\color[rgb]{0.10196078,0.02745098,0.02745098}\makebox(0,0)[lt]{\lineheight{1.25}\smash{\begin{tabular}[t]{l}$U_{m_1, m_1}$\end{tabular}}}}%
    \put(0.84191445,0.36526164){\color[rgb]{0.02352941,0.02352941,0.02352941}\makebox(0,0)[lt]{\lineheight{1.25}\smash{\begin{tabular}[t]{l}$U_{m_3, m_3}$\end{tabular}}}}%
    \put(0.08573269,0.00388322){\color[rgb]{0.21568627,0.10980392,0.10980392}\makebox(0,0)[lt]{\lineheight{1.25}\smash{\begin{tabular}[t]{l}$H_{s_1, m_1}$\end{tabular}}}}%
    \put(0.28589696,0.03136061){\color[rgb]{0.09019608,0.02352941,0.02352941}\makebox(0,0)[lt]{\lineheight{1.25}\smash{\begin{tabular}[t]{l}$H_{s_2, m_1}$\end{tabular}}}}%
    \put(0.58931135,0.03136063){\color[rgb]{0.09411765,0.05098039,0.05098039}\makebox(0,0)[lt]{\lineheight{1.25}\smash{\begin{tabular}[t]{l}$H_{s_1, m_3}$\end{tabular}}}}%
    \put(0.7838454,0.02700548){\color[rgb]{0.14901961,0.0745098,0.0745098}\makebox(0,0)[lt]{\lineheight{1.25}\smash{\begin{tabular}[t]{l}$H_{s_2, m_3}$\end{tabular}}}}%
    \put(0.2336342,0.66286913){\color[rgb]{0.02352941,0.02352941,0.02352941}\makebox(0,0)[lt]{\lineheight{1.25}\smash{\begin{tabular}[t]{l}$H_{s_1, m_2}$\end{tabular}}}}%
    \put(0.64302592,0.66577256){\color[rgb]{0.02352941,0.02352941,0.02352941}\makebox(0,0)[lt]{\lineheight{1.25}\smash{\begin{tabular}[t]{l}$H_{s_2, m_2}$\end{tabular}}}}%
  \end{picture}%
\endgroup%

   \end{center}
}

\subsection{Proof of the theorem}
For each saddle $s_i$, we want to define $V_{s_i}:= \{ (z_1, z_2)\in Sym^2(\Sigma) | z_1 \in \Gamma^{\epsilon}_{s_i}, z_2 \notin \Gamma^{2\epsilon}_{s_i} \}$ where $\Gamma^{\epsilon}_{s_i}$ is a subset of $\Gamma^{2\epsilon}_{s_i}$ which is identified with $(-\epsilon, \epsilon) \times \mathbb{R}$. Let $\mathcal{N}_{s_i}=\cup_{t\geq 0} {\Psi}^{-t}(V_{s_i})$, where $\Psi_t$ is the downward gradient flow of the function $\widetilde{\Phi}$.
$\mathcal{N}_{s_i}$ is an open neighborhood of $H_{s_i}$ for each $s_i$.

\begin{defn}
    For each saddle $s_i$, we define a function $I_i: \mathcal{N}_{s_i} \to \mathbb{R}$ given by $I=Im(z_1)$ if $(z_1, z_2) \in V_{s_i}$ and $I=e^{\frac{3}{2}t} Im (z_1(t))$ for $(z_1,z_2) \in {\Psi}^{-t}(V_{s_i})$ for some $t>0$. This is independent of the choice of $t$.
\end{defn}

\begin{proposition}\label{Ifctproperties}
    The following properties hold. 
\begin{enumerate}
    \item $ZI_i= \frac{3}{2} I_i$
 \item $dI_i|_{\mbox {char. fol.}}>0$
    \item $Z$ is tangent to $H_s$.
\end{enumerate}
\end{proposition}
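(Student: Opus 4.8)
The plan is to reduce the global statement to the local model of Section~3 by working separately on the open neighborhood $\mathcal{N}_{s_i}$ of each sectorial hypersurface $H_{s_i}$, using that near $H_{s_i}$ the function $\widetilde{\Phi}$ and its gradient flow decouple into a product of a factor supported near $\gamma_{s_i}$ and a factor on the remaining (unperturbed) surface. Concretely, a point $(z_1,z_2)\in\mathcal{N}_{s_i}$ with $z_1\in\Gamma^{\epsilon}_{s_i}$ has $z_2\notin\Gamma^{2\epsilon}_{s_i}$, so the $z_2$-coordinate flows by the ordinary Stein gradient flow on $\Sigma$ while the pair $(z_1,z_2)$, once we pass to local coordinates near $s_i$, is governed exactly by the model $\widetilde{\Phi}(z,w)$ of Section~3 (with $z_2$ playing the role of the non-perturbed factor). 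In particular $I_i$ is, by construction and by equation~\eqref{eq3}, the pullback of $\mathrm{Im}(z_1)$ under the flow rescaled by $e^{3t/2}$, so it is well defined and smooth on $\mathcal{N}_{s_i}$ exactly as in the local model.

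For part (1), $ZI_i=\tfrac32 I_i$: I would copy the computation of Lemma~\ref{lemmai1} verbatim. Since $\Psi_t$ is the downward gradient flow of $Z$ and $V_{s_i}$ lies outside the perturbed region, on $V_{s_i}$ the $y_1=\mathrm{Im}(z_1)$ coordinate decays as $e^{-3t/2}$ (this is exactly \eqref{eq3} applied in the local coordinate near $s_i$, where the relevant quadratic factor is $\tfrac{\alpha}{2}y^2$ with $\alpha=\tfrac32$). Differentiating $\Psi_s^*(I_i)=e^{3(t-s)/2}\mathrm{Im}(z_1(t))$ in $s$ gives $-\tfrac32 I_i$, hence $ZI_i=\tfrac32 I_i$; the linearity-at-infinity condition holds with $\alpha=\tfrac32>0$. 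For part (3), $Z$ tangent to $H_{s_i}$: this is immediate because $H_{s_i}$ was defined as (the closure of) a union of stable manifolds of critical points of $\widetilde{\Phi}$, i.e. as a union of gradient trajectories of $Z$ together with limit points, so $Z$ is tangent to it — this is the direct analogue of Lemma~\ref{lemmai3}.

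For part (2), $dI_i|_{\mathrm{char.fol.}}>0$: inside $V_{s_i}$ the hypersurface $H_{s_i}$ is cut out by $\mathrm{Re}(z_1)=0$ (in the local coordinate), so $\omega|_{H_{s_i}\cap V_{s_i}}$ has kernel spanned by $\partial_{y_1}$, and since $I_i=y_1$ there, $dI_i$ is positive on the characteristic foliation with the correct coorientation. To extend this to all of $\mathcal{N}_{s_i}$, I would use that for any point there is $t_0>0$ with $\Psi_{t_0}$ mapping it into $V_{s_i}$; $\Psi_{t_0}$ preserves the characteristic foliation and rescales $dI_i$ by the positive factor $e^{-3t_0/2}$, so positivity is preserved — this mirrors Lemma~\ref{lemmai2}. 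The one point requiring a little care, and the main obstacle, is checking that the smoothing of $\widetilde{\Phi}$ near the diagonal (where $z_1$ and $z_2$ are close and both near $\gamma_{s_i}$) genuinely agrees with the local model of Section~3, so that the escaping behavior of Lemma~\ref{escaping lemma} and the identities of Lemma~\ref{lemma2} apply and $\mathcal{N}_{s_i}$ really is an open neighborhood of $H_{s_i}$ on which these three formulas make sense; this is exactly where the hypothesis of a \emph{quadratic} Stein structure, chosen to match the model, is used. Once that identification is in place, (1)--(3) are the global restatements of Lemmas~\ref{lemmai1}--\ref{lemmai3} and the proof is complete.
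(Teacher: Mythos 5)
Your proposal is correct and takes essentially the same route as the paper, which proves the proposition in a single sentence by observing that the quadratic Stein structure makes $\widetilde{\Phi}$ on a neighborhood of $H_{s_i}$ literally agree with the local model of Section~3, so that Lemmas~\ref{lemmai1}--\ref{lemmai3} apply verbatim. Your more detailed account (decoupling of the flow, the $e^{-3t/2}$ decay of $\mathrm{Im}(z_1)$, stable-manifold tangency, and rescaling of $dI_i$ along $\Psi_t$) simply spells out what the paper's terse reduction to the local model leaves implicit.
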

Because the Stein structure restricted to the band $\Gamma^{2\epsilon}_{s_i}$ can be identified with the local model in section $2$, these properties follow from our study of the local model.

Let us recall the definition of sectorial covering and sectorial hypersurfaces (Definition \ref{sector_covers} and definition 12.2 in \cite{MR4695507}).

{\emph{Claim.}} We want to show $\{H_{s_i}\}$ is indeed a sectorial covering by proving the three properties in the definition hold.  

\begin{proof}
\begin{enumerate}
    \item $dI_i|_{char.fol.(H_{s_i})}\ne 0$ is true by Proposition \ref{Ifctproperties}.
    \item  $dI_i|_{char.fol.(H_{s_j})}=0$\\
Let $\mathcal{N}_{ij}:=\cup_{t>0} \Psi^{-t}(V_{s_i} \cap V_{s_j})$ and this is a neighborhood of $C_{s_i,s_j}$, since we have $H_{s_i} \cap H_{s_j} = C_{s_i,s_j} =\gamma_{s_i} \times \gamma_{s_j} \subset \mathcal{N}_{ij} \subset \mathcal{N}_{s_i}  \cap \mathcal{N}_{s_j}$. There exists $t>0$ s.t. $I_i= e^{\frac{3t}{2}}\Psi^*_t(Im(z_1))$ and $I_j= e^{\frac{3t}{2}}\Psi^*_t(Im(z_2))$, where $z_1(t)\in \Gamma_{s_i}^\epsilon$ and $z_2(t)\in \Gamma_{s_j}^\epsilon$. Since the foliation inside $V_{s_j}$ is $Ker \omega|_{H_{s_j}\cap V_{s_j}}$ which is the imaginary $y_2$-direction. Since for $(z_1,z_2) \in H_{s_i} \cap V_{s_i}, I_i=Im(z_1)=y_1$, $dI_i$ is always zero on the characteristic foliation of $H_{s_j} \cap V_{s_j}$. For each point $s$ in $\mathcal{N}_{ij}\cap H_{s_j}$, there exists some $t_s>0$ such that $\Psi_{t_s}(s)\in V_{s_j} \cap V_{s_i}$, then $\Psi_{t_s}$ maps the characteristic foliation of $H_{s_j}$ to itself and scales $dI_i$ by $e^{-\frac{3t_s}{2}}$. Hence, $dI_i$ will still be zero on the characteristic foliation of $H_{s_j}$. 

\item $\{I_i,I_j\}= 0$. \\
Using the same notation as in the previous part, we have 
\begin{align*}
    \{I_i,I_j\}&= \{ e^{\frac{3t}{2}}\Psi^*_t(Im(z_1)),e^{\frac{3t}{2}}\Psi^*_t(Im(z_2))\}\\
    &= e^{\frac{3t}{2}}\Psi^*_t  \{Im(z_1),Im(z_2)\}
    \\ &=0.
\end{align*}
The last equality holds since $z_1(t)\in \Gamma_{s_i}^\epsilon$ and $z_2(t)\in \Gamma_{s_j}^\epsilon$.
    
\end{enumerate}
\end{proof}

\subsection{Symmetric product of 2-dimensional sectors}
  \begin{corollary}
        For any two dimensional Liouville sector $X$, we can smooth Sym$^2(X)$ to a Liouville sector (with corners if $\partial X$ has more than one component). 
    \end{corollary}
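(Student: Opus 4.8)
The plan is to reduce the statement to the local-model analysis of Section 3 together with the global construction of Theorem \ref{mainthm}. First I would invoke Proposition \ref{global} (and Remark \ref{withbdry}): since $X$ is a $2$-dimensional Liouville sector, it is a punctured bordered Riemann surface whose boundary components are all homeomorphic to $\mathbb{R}$, so $X$ embeds as a sectorial domain inside a Riemann surface $\Sigma$ equipped with a quadratic Stein structure, where the sectorial boundary of $X$ sits along a union of the stable arcs $\gamma_{s_i}$ of the saddles. Concretely, truncating $X$ and completing along its contact-type boundary realizes $\widehat X$ as an open piece of $(\Sigma,\varphi)$ cut out by some of the arcs $\{\gamma_i\}$; equivalently, $X$ is one of the sectors-with-corners $U_{m_i,m_j}$ appearing in the decomposition of $Sym^2$ only after we pass to $Sym^2$, but at the level of $\Sigma$ it is a union of polygonal pieces $D_{m_j}$ glued along quadratic regions.

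Next I would apply the decomposition of Theorem \ref{mainthm} to $Sym^2(\Sigma)$: the hypersurfaces $H_{s_i}$ cut $Sym^2(\Sigma)$ into the sectors-with-corners $U_{m_i,m_j}$. The point is that $Sym^2(X)$ — suitably smoothed near the diagonal using the regularization $\widetilde\Phi$ from Section 3 — is precisely the union of those $U_{m_i,m_j}$ for which both $m_i$ and $m_j$ lie in (the closure of) $X$, glued along the corresponding pieces of the $H_{s_i}$. Since each $U_{m_i,m_j}$ is a Liouville sector with corners (this is the content of the local model: Lemmas \ref{lemmai1}, \ref{lemmai2}, \ref{lemmai3} give the $I$-functions and the tangency of $Z$, and the quadratic identification of $\varphi$ on $\Gamma^{2\epsilon}_{s_i}$ with the local model transports these to $Sym^2(\Sigma)$), and since the $H_{s_i}$ restricted to this union still satisfy the sectorial conditions from the Claim (Poisson-commuting $I_i$'s, transverse corners $C_{s_i,s_j}=\gamma_{s_i}\times\gamma_{s_j}$), the union is itself a Liouville sector, with corners exactly when more than one arc bounds $X$, i.e. when $\partial X$ has more than one component. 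The smoothing near the diagonal $w=0$ is handled once and for all by Lemma \ref{escaping lemma} and Lemma \ref{funcc}, which is why we only obtain a Liouville sector after smoothing rather than on the nose.

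Finally I would address the completion/deformation-invariance statement implicit in the corollary (and made explicit in Theorem B): one checks that the smoothed $Sym^2(X)$ does not depend, up to Liouville deformation, on the auxiliary choices — the extra non-separating arcs added in Proposition \ref{global}, the smoothing parameter $\epsilon$, and the particular quadratic Stein structure. This follows because any two such choices are connected through a path of quadratic Stein structures, and the constructions of the $I_i$ and of $\widetilde\Phi$ vary continuously along such a path, so the induced sectorial structures are deformation equivalent; the relations among the three completions in Proposition 2.21 then let one pass freely between the truncated and completed pictures. The main obstacle I anticipate is not any single deep step but rather the bookkeeping at the corners: one must verify that when two arcs $\gamma_{s_i}$, $\gamma_{s_j}$ both meet $X$, the corner locus $C_{s_i,s_j}$ inside $Sym^2(X)$ genuinely has the product local model $\{R_1\le 0\}\times\{R_2\le 0\}$ from the Remark following Definition \ref{sector_covers}, which requires knowing that the two smoothings (near the diagonal, and near each arc) are compatible — precisely the situation covered by the ``both $z_1,z_2\in\Gamma^{2\epsilon}_{s_i}$'' clause and the remark on higher $Sym^n$, so this is where I would spend the most care.
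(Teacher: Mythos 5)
Your proof takes a genuinely different route from the paper's. The paper's proof is very short: by the preceding proposition it builds a quadratic Stein structure on $X$ with \emph{exactly one} minimum $m$, attaches a half-plane $D_1$ along each boundary arc to form $\Sigma := X \cup D_1$, applies Theorem \ref{mainthm} to $Sym^2(\Sigma)$, and then identifies the smoothed $Sym^2(X)$ with the \emph{single} piece $U_{m,m}$ via Corollary \ref{sym2} (the model statement that $U_{-,-}$ is a deformation retract of $Sym^2(\{\mathrm{Re}(z)\le 0\})$). Your argument instead allows $X$ to contain several minima after the auxiliary arcs of Proposition \ref{global} are added, and identifies the smoothed $Sym^2(X)$ as a \emph{union} of sectors $U_{m_i,m_j}$ glued along the interior hypersurfaces $H_{s_k,m_j}$; this forces you to re-verify the sectorial-covering conditions on the boundary of the union. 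Your route is in fact more robust in one respect — if $X$ has positive genus and a single minimum, the interior saddles' stable arcs are non-separating and $H_{s,m}$ lies in the interior of $U_{m,m}$ rather than its boundary, a nuance that the one-minimum approach glosses over; cutting $X$ into polygonal pieces with extra arcs sidesteps this. On the other hand, the paper's one-minimum choice is much cleaner when it applies and avoids the bookkeeping about interior gluings that you flag as the delicate point.

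Two caveats on your write-up: first, you should invoke Corollary \ref{sym2} explicitly — it is what lets you identify $Sym^2(X)$ (after smoothing $\widetilde\Phi$ near the diagonal) with the relevant $U$-pieces up to deformation retract; you allude to it only obliquely via "deformation retract" language. Second, the sentence "$X$ is one of the sectors-with-corners $U_{m_i,m_j}$ appearing in the decomposition of $Sym^2$" is confusing as written (presumably you mean $Sym^2(X)$, not $X$, and "union of" rather than "one of"); clean this up so the claim being verified is unambiguous. Your final paragraph on deformation-invariance of the auxiliary choices is a useful addition not present in the paper's proof, and it correctly leans on Lemma \ref{2dsector} and the completion comparisons, though you should be explicit that this is the content of the proposition that precedes the corollary in the paper.
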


\begin{proposition}
     Every Liouville sector $X$ of complex dimension $1$ is deformation equivalent to a piece of a quadratic Stein Riemann surface.
\end{proposition}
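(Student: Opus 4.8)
The plan is to show that $X$ is Liouville deformation equivalent to a connected component of some quadratic Stein Riemann surface cut open along a subcollection of the stable arcs of its saddles, and to do so in two moves: first build such a quadratic Stein model with the correct underlying topology (using Proposition~\ref{global}), and then use the rigidity of Liouville, equivalently Weinstein, structures on surfaces to identify that model up to deformation with the given $X$.

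First I would normalize $X$. Perturbing the Liouville potential, I may assume it is Morse, hence (in real dimension $2$, where all critical points have index $0$ or $1$) that $X$ is a Weinstein sector: it deformation retracts onto its skeleton, a ribbon graph with finitely many external edges ending on $\partial X$. Topologically $X$ is then a finite-type noncompact surface with boundary in which, as recalled above, every boundary component is a properly embedded copy of $\R$. I would record this data: the boundary components $\partial_1,\dots,\partial_b$, the genus, and the cylindrical (Liouville) ends.

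Next I would embed $X$ into a quadratic Stein surface. Let $\widehat\Sigma := X\cup_{\partial X}\overline X$ be the double of $X$ across its boundary; this is a noncompact surface without boundary, and the $b$ seams $\gamma_1,\dots,\gamma_b$ (the images of $\partial_1,\dots,\partial_b$) are disjoint properly embedded arcs, properness holding because each $\partial_j\cong\R$. Proposition~\ref{global} then equips $\widehat\Sigma$ with a quadratic Stein structure $\varphi$ carrying one saddle $s_j$ on each $\gamma_j$ (and possibly further saddles, with one minimum per complementary region). Cutting $(\widehat\Sigma,\varphi)$ along $\gamma_1\cup\cdots\cup\gamma_b$ returns two copies of $X$, and since $\varphi$ is quadratic on the bands $\Gamma^{2\epsilon}_{s_j}$, the local model of Section~$3$ shows that each copy inherits a genuine Liouville (saddle) sector structure; denote one copy, so equipped, by $X'$. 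By construction $X'$ is a piece of a quadratic Stein Riemann surface whose underlying surface-with-boundary is that of $X$, and the remaining freedom in Proposition~\ref{global} (the constant $C$, the placement and number of auxiliary saddles, the ends of $\widehat\Sigma$, and the parameter $\alpha$) can be used to match the genus of $X$, its behavior at the cylindrical ends, and its sectorial boundary type.

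Finally I would argue that $X'$ is Liouville deformation equivalent to $X$. Both are $2$-dimensional Weinstein sectors on the same surface, with the same sectorial boundary (after deforming the parameter $\alpha$ if necessary, which is harmless in view of the family $Z_\CC^\alpha$ of \cite{MR4106794}) and with the same behavior at infinity, and in dimension $2$ such data pins the structure down up to deformation: a neighborhood of the skeleton of a surface Weinstein sector determines it up to deformation, and any two ribbon-graph presentations of a fixed surface-with-boundary are connected through handle slides realized by Liouville deformations — the elementary two-dimensional instance of Weinstein flexibility. The step I expect to be the main obstacle is exactly this last one: making precise and carrying out the uniqueness-up-to-deformation of $2$-dimensional Liouville sectors, together with the bookkeeping required to match the sectorial boundary parameter $\alpha$ and the actions of the Reeb orbits at the cylindrical ends. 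By contrast, the topological realization in the middle step is routine.
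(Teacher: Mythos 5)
Your overall strategy matches the paper's in outline: produce a quadratic Stein Riemann surface diffeomorphic to $X$ and then argue that the two Liouville sector structures on the same underlying surface-with-boundary are deformation equivalent. The paper's proof is the same two moves, except it builds the model directly on $X$ via Proposition~\ref{global} and Remark~\ref{withbdry} (no doubling required), and it dispatches the second move by citing Lemma~\ref{2dsector}.

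The genuine gap in your argument is the step you yourself flag as ``the main obstacle.'' You appeal to a vague ``rigidity/uniqueness of two-dimensional Liouville sectors'' and to handle slides of ribbon graphs as an instance of Weinstein flexibility, but you do not actually prove the needed deformation equivalence, and that route is both underdeveloped and harder than it needs to be. The statement you need is exactly Lemma~\ref{2dsector}: any two Liouville sector structures on a fixed surface-with-boundary are deformation equivalent. Its proof in the paper is not a flexibility or skeleton-uniqueness argument at all --- it is a direct convex interpolation. One sets $\lambda_t = (1-t)\lambda_0 + t\lambda_1$ and observes that in real dimension two the convex combination $\omega_t = d\lambda_t$ of two area forms remains nondegenerate, then checks that $Z_t$ stays outward pointing at infinity and tangent to the boundary at infinity, and interpolates the $I$-functions near the compact core with the linearity condition extended outward by the flow. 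Since this lemma appears in the same subsection and is the intended ingredient, you should invoke it rather than sketch an unfinished flexibility argument. Your doubling-and-cutting construction of the quadratic Stein model is also unnecessarily indirect: Remark~\ref{withbdry} already extends Proposition~\ref{global} to surfaces with boundary whose components are all homeomorphic to $\mathbb{R}$, which is precisely the class of surfaces underlying two-dimensional Liouville sectors, so the model can be built on $X$ itself.
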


\begin{lemma}
    Any two dimensional Liouville sectors structures on a given surface with boundary $(\Sigma, \partial)$ are deformation equivalent.
  \label{2dsector}  
\end{lemma}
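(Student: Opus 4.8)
The plan is to show that the space of Liouville sector structures on a fixed surface-with-boundary $(\Sigma, \partial\Sigma)$ is connected (in fact weakly contractible, but connectedness is what the statement requires), by reducing to the contractibility of the space of compatible data. First I would recall from Proposition~\ref{global} and Definition~\ref{D_1minus} that $\Sigma$ admits \emph{some} quadratic Stein structure whose saddles' stable manifolds are a chosen system of arcs cutting $\Sigma$ into polygons $\bigcup D_{n_j}$; the previous Proposition in this subsection already tells us every $1$-dimensional Liouville sector is deformation equivalent to such a ``piece of a quadratic Stein Riemann surface,'' so it suffices to compare two quadratic Stein models on $\Sigma$ and connect them by a path of Liouville (equivalently Weinstein, in real dimension $2$) structures.

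The key steps, in order: (1) Fix the underlying smooth surface $\Sigma$ and a Liouville sector structure $(\omega_0,\lambda_0,Z_0)$. Using that $H^1_{dR}(\Sigma)$ classifies exact symplectic forms up to scaling on a surface and that $Z$ is determined by $\lambda$ via $\iota_Z\omega=\lambda$, reduce the problem to connecting two primitives $\lambda_0,\lambda_1$ with the same cylindrical/sectorial behavior at infinity and along $\partial\Sigma$. (2) On a surface the Liouville condition is essentially open: $\lambda$ is a Liouville form iff $d\lambda$ is an area form, and the sector condition near $\partial\Sigma$ is the normal-form condition that each boundary component is $\mathbb{R}$-like (no $S^1$ boundary components), which is preserved throughout. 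So take the straight-line path $\lambda_t=(1-t)\lambda_0+t\lambda_1$ after first arranging, by an isotopy supported near $\partial\Sigma$ and near infinity, that $\lambda_0$ and $\lambda_1$ agree there; then $d\lambda_t$ is a convex combination of two area forms inducing the same orientation, hence an area form, so $\lambda_t$ is a path of Liouville structures. (3) Check that the sectorial boundary data (the function $I$ with $ZI=\alpha I$ and $dI|_{\text{char.fol.}}>0$) varies smoothly along the path: since near $\partial\Sigma$ we have arranged $\lambda_t$ to be $t$-independent, $I$ and $Z$ are literally constant in $t$ there, so the Liouville sector axioms hold for every $t$. (4) Conclude that the structures at $t=0$ and $t=1$ are deformation equivalent in the sense of the paper, and invoke step (1)'s reduction to cover the case of two arbitrary (a priori non-quadratic, non-Stein) sector structures via the preceding Proposition.

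The main obstacle I expect is step (2)'s normalization at infinity: making $\lambda_0$ and $\lambda_1$ \emph{agree} (not just be cohomologous) outside a compact set, so that the linear interpolation genuinely stays cylindrical/sectorial, requires choosing compatible identifications of the ends of $\Sigma$ and absorbing the discrepancy by a Liouville homotopy — this is where the condition ``no boundary component is $S^1$'' is essential, since an $S^1$ end could carry a nonzero period obstructing the interpolation (this is exactly why punctured bordered surfaces with circle boundary fail to be sectors, cf. the examples after Definition~\ref{sector_covers}). A secondary technical point is ensuring the interpolation respects the corner structure when $\partial\Sigma$ is disconnected, but since distinct boundary components have disjoint collar neighborhoods, the normalization can be performed on each collar independently and the interpolation is unobstructed there. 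Once these normalizations are in place, the convexity of the space of area forms in a fixed cohomology class makes the rest routine.
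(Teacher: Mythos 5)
Your overall strategy (linear interpolation of Liouville forms) coincides with the paper's, but the two proofs diverge at the crucial point: what to do about the behavior at infinity and along $\partial\Sigma$. The paper interpolates $\lambda_t = (1-t)\lambda_0 + t\lambda_1$ \emph{without} first making $\lambda_0$ and $\lambda_1$ agree there; it then checks directly that the interpolated Liouville vector field $Z_t = \widetilde{\omega_t}^{-1}(\lambda_t)$ is still outward-pointing and tangent to $\partial\Sigma$ at infinity, using the $2$-dimensional fact that any two area forms inducing the same orientation are pointwise proportional, so $\widetilde{\omega_0}^{-1}(\lambda_1)$ is a positive multiple of $Z_1$ and hence $\widetilde{\omega_0}^{-1}(\lambda_t)$ is a positive combination of $Z_0$ and something proportional to $Z_1$. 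It then builds $I_t := (1-t)I_0 + tI_1$ on a large compact set $S$ containing both zero loci and extends $I_t$ outward by the $Z_t$-flow (so $Z_t I_t = \alpha_t I_t$ holds by construction, and $dI_t>0$ is checked on $S$ and propagated by the flow). Your proposal instead asks to first normalize $\lambda_0$ and $\lambda_1$ to literally \emph{agree} near $\partial\Sigma$ and at infinity, after which the interpolation trivially preserves the sector conditions there; that is a clean idea, but it replaces the paper's elementary pointwise check with a normalization step that you identify as ``the main obstacle'' and do not carry out.

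That unfinished normalization is the genuine gap: arranging that two Liouville sector structures coincide on a neighborhood of $\partial\Sigma$ and outside a compact set is itself a nontrivial Liouville (and sector-boundary) homotopy, and you would need to produce it before the straight-line interpolation argument applies. Your period remark correctly identifies \emph{why} there is no cohomological obstruction in the sectorial case, but it does not construct the required isotopy; without it, the subsequent claim that $I$ and $Z$ are ``literally constant in $t$'' near the boundary is unjustified. Separately, your step (1) reduction should be dropped or reordered: the ``previous Proposition'' asserting that every $1$-dimensional Liouville sector is deformation equivalent to a piece of a quadratic Stein surface is, in the paper, itself \emph{proved using} Lemma~\ref{2dsector}, so invoking it here is circular. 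Fortunately your steps (2)--(4) do not actually use the quadratic-Stein reduction, so the circularity is curable by simply omitting step (1); the substantive issue remains the unproved normalization, which the paper's proportionality-plus-flow-extension argument circumvents entirely.
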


\begin{proof}
    Let $X_0:= (\Sigma, \lambda_0, I_0)$ and $X_1:= (\Sigma, \lambda_1, I_1)$ be two Liouville sectors of dimension $2$. Let $X_0$ be with the symplectic form $\omega_0= d\lambda_0$ and the $I$-function $I_0$ on the boundary and $X_1$ be with the symplectic form $\omega_1= d\lambda_1$ and $I$-function $I_1$ on the boundary. 

    Let $\omega_t= d \lambda_t = (1-t) \omega_0 +t \omega_1$ and let $Z_t$ be the Liouville vector field of $\lambda_t=(1-t)\lambda_0 + t\lambda_1$, and $Z_t = \widetilde{\omega_t}^{-1}(\lambda_t)$ where $\widetilde{\omega}$ is the canonical isomorphism from the tangent bundle to the cotangent bundle given by $\omega$.
    
 We have $\widetilde{\omega_0}^{-1}(\lambda_t)=(1-t)\widetilde{\omega_0}^{-1}(\lambda_0)+t\widetilde{\omega_0}^{-1}(\lambda_1)$. We can see that $Z_t$ is pointing outward near infinity since $Z_0= \widetilde{\omega_0}^{-1}(\lambda_0)$ is pointing outward near infinity and $\widetilde{\omega_0}^{-1}(\lambda_1)$ is proportional to $\widetilde{\omega_1}^{-1}(\lambda_1)=Z_1$ which is also pointing outward near infinity. 

To show $X_0$ and $X_1$ are deformation equivalent, we need to show that the following three properties hold for a suitable function $I_t$ we construct below.
 \begin{enumerate}
     \item $Z_t$ is tangent to the boundary $\partial X_i$ outside of a compact subset.
     \item $dI_t|_{\partial X_i} >0$.
     \item $Z_t I_t = \alpha_t I_t$ outside of a compact set for some $\alpha_t>0$.
 \end{enumerate}

 First, we have $\widetilde{\omega_0}^{-1}(\lambda_t)=(1-t)\widetilde{\omega_0}^{-1}(\lambda_0)+ t\widetilde{\omega_0}^{-1}(\lambda_1)$. We know $Z_0= \widetilde{\omega_0}^{-1}(\lambda_0)$ is tangent to the boundary $\partial X_i$ at infinity, and $\widetilde{\omega_0}^{-1}(\lambda_1)$ is also tangent to the boundary $\partial$ at infinity since it is proportional to $Z_1$ which is tangent to the boundary at infinity.

Let $I_t:= (1-t)I_0+tI_1$ on a large compact set $S$ which contains $I_0^{-1}(0)$ and $I_1^{-1}(0)$ such that $Z_0$ and $Z_1$ both tangent to the boundary $\partial X_i$. We can then extend $I_t$ from $S$ to the whole boundary $\partial X_i$ by defining it using the Liouville vector field $Z_t$ such that $Z_t I_t =\alpha_t I_t$ where $\alpha_t= (1-t)\alpha_0 +t\alpha_1$. We only need to show that properties $2$ and $3$ hold for the compact set $S$ since it holds automatically via the flow of $Z_t$ outside of the set $S$. 

Secondly, $dI_t = (1-t) dI_0 + t dI_{1}|_{\partial} >0$ holds within the compact set $S$. On the complement of $S$, since $I_0^{-1}(0)\in S$ and $I_1^{-1}(0) \in S$, we have $I_0$ and $I_1$ both positive on $\mathbb{R}_+ - S$ and both negative on $\mathbb{R}_- - S$. And we know $dI_0>0$ and $dI_1>0$, we can get $dI_t>0$ outside of $S$ as well.
 
For the last property, $Z_t I_t = \alpha_t I_t$ outside of a compact set for some $\alpha_t>0$ is automatically true by construction.


\end{proof}

\begin{proof}[Proof of proposition]
This is true as the result follows from the lemma \ref{2dsector} and remark \ref{withbdry}. We use the construction of section $4$ to build a quadratic Stein sector on a surface diffeomorphic to $\Sigma$ and use the lemma to show the two sectors are deformation equivalent to each other.
\end{proof}

\begin{proof}[Proof of the Corollary]

    Step 1.  By the Proposition above, there exists a quadratic Stein structure on $X$ which has the desired local model 
   near each boundary arc. And we can arrange to have only one minimum $m$ in $X$ by Proposition \ref{global}.  We attach a half-plane $D_1$ along each boundary arc of $X$ and let $\Sigma := X \cup D_1$.\\
    Step 2.  By the main theorem $\ref{mainthm}$, we can get the decomposition $Sym^2(\Sigma) \cong \cup_{i,j} U_{m_i, m_j}$. And we can get $Sym^2(X)  \cong U_{m,m}$ as a deformation retract by the Corollary \ref{sym2}. 
    That is, we can smooth $Sym^2(X)$ to a Liouville sector $U_{m,m}$.
\end{proof}

\section{Fibers of stops and completions}
\subsection{Fibers of stops}
  Let $\Sigma$ be a Riemann surface with a quadratic Stein structure $\varphi$. And let $m$ be a minimum of $\varphi$. Let us cut the surface $\Sigma$ through the unstable manifolds $\gamma_i$ of each saddle, and we call the closure of the connected piece which contains the minimum $m$ to be $\Sigma_m$. 

\begin{proposition} \label{adjfiber}
    Let $s$ be a saddle and $m$ be a minimum adjacent to it. Then $H_{s,m} \subset \partial U_{m,m}$, and the fiber $F_{s,m}$ of the hypersurface $H_{s,m}$ is Liouville isomorphic to $\widehat{\Sigma_m}$ the convex completion at $\partial \Gamma_s^{2\epsilon}$ of $\Sigma_m - \Gamma_s^{2\epsilon}$.
\end{proposition}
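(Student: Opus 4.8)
The plan is to handle the two assertions separately. The inclusion $H_{s,m}\subset\partial U_{m,m}$ is essentially built into the construction: $U_{m,m}$ is the closure of the stable manifold of the index-$0$ point $(m,m)$, and by the description of the sectors $U_{m_i,m_j}$ following Theorem \ref{mainthm} its boundary consists of the strata $H_{s_k,m}$ with $s_k$ a saddle adjacent to $m$. Concretely, a point of the open part of $H_{s,m}$ is a configuration $(z_1,z_2)$ with $z_1\in\gamma_s$ and $z_2$ in the open basin of $m$; since $\gamma_s$ is an edge of the polygon $\Sigma_m$, pushing $z_1$ slightly to the $\Sigma_m$-side of $\gamma_s$ — where, by the local quadratic model of Section 4, the Liouville flow repels $\mathrm{Re}(z_1)$ from $\gamma_s$ — carries $(z_1,z_2)$ into the stable manifold of $(m,m)$. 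Hence one side of a neighborhood of $H_{s,m}$ lies in $U_{m,m}$, so $H_{s,m}\subset\partial U_{m,m}$.

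For the fiber, recall $F_{s,m}=I_s^{-1}(0)\cap H_{s,m}$, equivalently the symplectic reduction of $H_{s,m}$ by its characteristic foliation; since $Z$ is tangent to $H_{s,m}$ (Lemma \ref{lemmai3}), it descends to $F_{s,m}$, which is a Liouville sector. I would split $H_{s,m}$ along the locus $\{z_2\in\partial\Gamma_s^{2\epsilon}\}$. Over $\{z_2\in\Sigma_m\setminus\Gamma_s^{2\epsilon}\}$ the two points stay at distance $\ge 2\epsilon$, so $\widetilde\Phi$ coincides with the product potential $\varphi(z_1)+\varphi(z_2)$ and the ambient neighborhood is a symplectic product $\Gamma_s^{\epsilon}\times(\Sigma_m\setminus\Gamma_s^{2\epsilon})$; here $H_{s,m}$ is $\gamma_s\times(\Sigma_m\setminus\Gamma_s^{2\epsilon})$, and $\gamma_s$ — a Lagrangian curve in the first factor — is its own characteristic foliation, so the reduction of this part is the sector domain $F^{\mathrm{out}}:=(\Sigma_m\setminus\Gamma_s^{2\epsilon},\,d^c\varphi)$, with its sectorial boundary along the edges $\gamma_{s_k}$ ($s_k\ne s$) and convex boundary along $\partial\Gamma_s^{2\epsilon}\cap\Sigma_m$ inherited unchanged.

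To finish one shows that $F^{\mathrm{out}}$ is a \emph{truncation} of $F_{s,m}$, so that $F_{s,m}\cong\mathsf C(F^{\mathrm{out}})=\widehat{\Sigma_m\setminus\Gamma_s^{2\epsilon}}=\widehat{\Sigma_m}$ by the proposition of Section 2.3 identifying the completion of a truncation with the ambient sector. Over the complementary locus $\{z_2\in\Sigma_m\cap\Gamma_s^{2\epsilon}\}$ we are inside the local model of Section 3 (the smoothing was chosen to agree with it), where the reduction is parametrized by $w$ with $c(\sqrt w)\in(0,\epsilon]$, the level $c=\epsilon$ being exactly $\partial\Gamma_s^{2\epsilon}\cap\Sigma_m$. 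By Lemma \ref{escaping lemma} and the scaling identity \eqref{pullback}, $c$ grows like $e^{t/2}$ under the downward gradient (= negative Liouville) flow while its imaginary part decays; thus every point of $F_{s,m}$ flows in finite time into $\{c\ge\epsilon\}=F^{\mathrm{out}}$, the coordinate $r:=(\epsilon/c)^2$ satisfies $Z\cdot r=r$ on $\{c\le\epsilon\}$, and $Z$ is convex along $c=\epsilon$; this exhibits $\{c\le\epsilon\}$ as the cylindrical end $(\partial\Gamma_s^{2\epsilon}\cap\Sigma_m)\times[1,\infty)_r$ and $F^{\mathrm{out}}$ as a truncation.

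The main obstacle is this last step: upgrading the coordinate computations of Section 3 from a description of where gradient trajectories go to a genuine identification of the reduction of $H_{s,m}$ over the band with a \emph{standard} cylindrical end — in particular pinning down the descended Liouville field as $r\,\partial_r$ through $r=(\epsilon/c)^2$ and checking it matches $F^{\mathrm{out}}$ across $c=\epsilon$, together with the corresponding statement near the corners $C_{s,s_k}$ where $H_{s,m}$ meets the neighboring $H_{s_k}$. The product-region reduction and the boundary inclusion are by comparison formal.
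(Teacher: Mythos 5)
Your proof follows the same strategy as the paper's: identify the part of the fiber where $z_2\notin\Gamma_s^{2\epsilon}$ as the sector domain $\{s\}\times(\Sigma_m-\Gamma_s^{2\epsilon})$ (you phrase this as reduction of $H_{s,m}$ along $\gamma_s$, the paper as the slice $I_s^{-1}(0)\cap H_{s,m}\cap V_s$ — these are the same thing), verify it is a truncation, and then invoke the fact that the completion of a truncation recovers the ambient Liouville sector (Proposition 2.16 in the paper's numbering), giving $F_{s,m}\cong\widehat{\Sigma_m}$.

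Where you diverge, and where you create work for yourself, is the truncation step. You try to explicitly identify the band locus $\{z_2\in\Gamma_s^{2\epsilon}\cap\Sigma_m\}$ of the fiber with a standard cylindrical end $(\partial\Gamma_s^{2\epsilon}\cap\Sigma_m)\times[1,\infty)_r$ via $r=(\epsilon/c)^2$, and you correctly flag this as the main obstacle — you'd need to pin down the descended Liouville field, match it across $c=\epsilon$, and handle the corners. The paper avoids all of this: since $\mathcal N_s=\bigcup_{t\ge 0}\Psi^{-t}(V_s)$ by construction, and both $\{I_s=0\}$ and $H_{s,m}$ are invariant under the Liouville flow, one has immediately $F_{s,m}=\bigcup_{t>0}\Psi^{-t}\bigl(I_s^{-1}(0)\cap H_{s,m}\cap V_s\bigr)$, which is precisely the truncation condition — no explicit cylindrical-end coordinate is needed, because Proposition 2.16 already says the completion of a truncation is the ambient sector regardless of what its end ``looks like.'' So the step you identify as the hard part is in fact sidestepped entirely by the formalism; your $r$-coordinate scaling computation ($Z\cdot c=-\tfrac12 c$ hence $Z\cdot r=r$) is consistent but superfluous. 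One small remark in your favor: you actually prove the inclusion $H_{s,m}\subset\partial U_{m,m}$, which the paper asserts but never argues.
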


\begin{proof}
The fiber $F_{s,m}: = I^{-1}_s(0) \cap H_{s,m}= \cup_{t>0} \Phi^{-t}(I_s^{-1}(0) \cap H_{s, m} \cap V_s)$.  Therefore, $I_s^{-1}(0) \cap H_{s, m} \cap V_s $ is a truncation of the fiber $F_{s,m}$. And we can also compute that
$$I_s^{-1}(0) \cap H_{s, m} \cap V_s =\{z_1=0, z_2 \notin \Gamma_s^{2\epsilon} | z_2 \in \Sigma_m\} \cong \{s\} \times (\Sigma_m - \Gamma_s^{2\epsilon})\subset Sym^2(\Sigma_m)$$ the two sides are Liouville isomorphic as sector domains, since the Stein structure here is just a Cartesian product.

 The fiber $F_{s,m}$ is a completion of  $I_s^{-1}(0) \cap H_{s, m} \cap V_s \cong \{s\}\times (\Sigma_{m}-\Gamma_s^{2\epsilon})$ whose completion is isomorphic to $\widehat{\Sigma_m}$.  
\end{proof}
\begin{proposition}\label{nadjfiber}
    Let $s$ be a saddle and $m_1$ be a minimum not adjacent to it. The fiber $F_{s,m_1}$ of $H_{s,m_1}$ is $\{s\} \times \Sigma_{m_1}$. 
\end{proposition}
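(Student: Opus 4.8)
The plan is to proceed exactly as in the proof of Proposition \ref{adjfiber}, but now exploit the fact that $m_1$ lies in a component of $\Sigma - \bigcup\gamma_{s_i}$ that does not touch the band $\Gamma_s^{2\epsilon}$; the point being that in this non-adjacent situation there is no completion to perform, since $\Sigma_{m_1}$ already has the right boundary. First I would recall that $F_{s,m_1} := I_s^{-1}(0)\cap H_{s,m_1}$, and that a truncation of this fiber is given by $I_s^{-1}(0)\cap H_{s,m_1}\cap V_s$, where $V_s = \{(z_1,z_2): z_1\in\Gamma_s^\epsilon,\ z_2\notin\Gamma_s^{2\epsilon}\}$. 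Since the stable manifold $H_{s,m_1}$ is the closure of the stable manifold of the index-one critical point $(s,m_1)$, and the gradient flow of $\widetilde\Phi$ is a product flow away from the diagonal, a point of $H_{s,m_1}$ has its first coordinate flowing into $\gamma_s$ and its second coordinate flowing into $m_1$, hence staying in $\Sigma_{m_1}$.

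Next I would compute this truncation explicitly: $I_s^{-1}(0)\cap H_{s,m_1}\cap V_s = \{z_1 = 0,\ z_2\in\Sigma_{m_1}\}$, because the condition $z_2\in\Sigma_{m_1}$ together with $m_1$ being non-adjacent to $s$ already forces $z_2\notin\Gamma_s^{2\epsilon}$ — the band around $\gamma_s$ is disjoint from the interior of $\Sigma_{m_1}$. So, unlike the adjacent case where $\Sigma_m - \Gamma_s^{2\epsilon}$ appeared and needed convex completion along $\partial\Gamma_s^{2\epsilon}$, here the truncated fiber is just $\{s\}\times\Sigma_{m_1}$, with the product Stein/Liouville structure. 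Then I would observe that $\Sigma_{m_1}$, cut along the unstable manifolds of its adjacent saddles, is already a sector domain whose completion is $\widehat{\Sigma_{m_1}} = \Sigma_{m_1}$ up to the usual cylindrical end at infinity — more precisely, completing the truncation $\{s\}\times\Sigma_{m_1}$ along its contact boundary recovers $\{s\}\times\Sigma_{m_1}$, so $F_{s,m_1}\cong\{s\}\times\Sigma_{m_1}$.

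The only genuine point to check — and the main obstacle, though a mild one — is verifying that $I_s^{-1}(0)\cap H_{s,m_1}\cap V_s$ really equals all of $\{z_1=0,\ z_2\in\Sigma_{m_1}\}$ and not a proper subset: I must confirm that every such configuration actually lies on the stable manifold $H_{s,m_1}$ of $(s,m_1)$, i.e.\ that under the downward gradient flow the pair $(0,z_2)$ with $z_2\in\Sigma_{m_1}$ converges to $(s,m_1)$. Since $\{z_1=0\}$ is precisely the stable manifold of $s$ in the one-variable model (the $y$-axis, which flows to $0$ by the exponential flow \eqref{exponential flow}), and $z_2\in\Sigma_{m_1}$ flows down to the minimum $m_1$ by definition of $\Sigma_{m_1}$, and the flow is decoupled off the diagonal (which $(0,z_2)$ avoids as long as $z_2\neq 0$, handled by a limiting/closure argument), the claim follows. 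I would then conclude by noting the identification is via $z_1\mapsto s$, $z_2\mapsto z_2$, as Liouville manifolds, completing the proof.
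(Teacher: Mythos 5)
Your proposal is correct and follows essentially the same route as the paper: identify the truncated fiber $I_s^{-1}(0)\cap H_{s,m_1}\cap V_s$ as $\{z_1=0,\ z_2\in\Sigma_{m_1}\}$ using the product structure away from the diagonal, observe that non-adjacency makes the constraint $z_2\notin\Gamma_s^{2\epsilon}$ vacuous so that no piece of $\Sigma_{m_1}$ is cut away and no convex completion step is needed, and conclude $F_{s,m_1}\cong\{s\}\times\Sigma_{m_1}$. The one phrase to tighten is ``completing the truncation along its contact boundary recovers $\{s\}\times\Sigma_{m_1}$'': more directly, since $\{z_1=0\}\times\Sigma_{m_1}$ is invariant under the Liouville flow (the $z_1$-factor is fixed and $\Sigma_{m_1}$ is a union of flow lines), the completion $\bigcup_{t>0}\Phi^{-t}(\cdot)$ is the identity, which is exactly the remark ``the Liouville structure is the Cartesian product one'' in the paper's proof.
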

\begin{proof}
Since $s$ and $m_1$ are not adjacent, the corresponding part of $H_{s,m_1}$ lies away from the diagonal.
Hence, locally near $s$, the hypersurface $H_{s,m_1}$ splits as a product

\[
I_s^{-1}(0)\cap H_{s,m_1}\cap V_s = \{z_1=0,\, z_2\in \Sigma_{m_1}\}
\cong \{s\}\times \Sigma_{m_1}.
\]
The Liouville structure is the Cartesian product one, so the same holds for the completed fiber:
$F_{s,m_1} = \bigcup_{t>0}\Phi^{-t}\!\left(I_s^{-1}(0)\cap H_{s,m_1}\cap V_s\right)
\cong \{s\}\times \Sigma_{m_1}.$
\end{proof}


\begin{example}
Let us recall the example with $3$ saddles in the example \ref{3saddle}. $s_1$ is adjacent to $m_1$ and $m_2$ and not adjacent to $m_3$. In this case, the fibers will be as follows by Proposition \ref{adjfiber} and Proposition \ref{nadjfiber}
\begin{enumerate}
    \item The fiber of $H_{s_1,m_1}$ in $\partial U_{m_1,m_1}$ is the Liouville completion of $\{s_1\}\times (\Sigma_{m_1}-\Gamma_{s_1}^{2\epsilon})$. 
    \item The fiber of $H_{s_1,m_2}$ in $\partial U_{m_2,m_2}$ is the Liouville completion of $\{s_1\}\times (\Sigma_{m_2}-\Gamma_{s_1}^{2\epsilon})$.
    \item The fiber of $H_{s_1,m_3}$ in $\partial U_{m_1,m_3}$ is the $\{s_1\} \times \Sigma_{m_3}$. 
\end{enumerate}
\end{example}

\begin{proposition}
    The fiber of $H_s$ is the Liouville completion of $\{s\} \times (\Sigma - \Gamma_s^{2\epsilon})$.
\end{proposition}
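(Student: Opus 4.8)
The plan is to globalize the computation behind Proposition~\ref{adjfiber}: instead of intersecting $I_s^{-1}(0)$ with a single adjacent piece $H_{s,m}$, I intersect it with all of $H_s=\bigcup_{m_j}H_{s,m_j}$ at once, exhibit a truncation of the resulting fiber, and recognize $F_s$ as its completion in the sense of Section~2.3. Concretely, inside the neighborhood $V_s=\{(z_1,z_2):z_1\in\Gamma_s^{\epsilon},\ z_2\notin\Gamma_s^{2\epsilon}\}$ one has $I_s=Im(z_1)$ and $H_s\cap V_s=\{Re(z_1)=0\}$, so
\[
I_s^{-1}(0)\cap H_s\cap V_s=\{\,z_1=s,\ z_2\in\Sigma-\Gamma_s^{2\epsilon}\,\}\;\cong\;\{s\}\times\bigl(\Sigma-\Gamma_s^{2\epsilon}\bigr).
\]
On this locus $z_1$ is pinned at the saddle and $z_2$ never enters the band $\Gamma_s^{2\epsilon}$, hence never meets the regularized region near the diagonal over $\gamma_s$; so the Liouville structure here is exactly the Cartesian product of the trivial structure at $\{s\}$ with the Stein structure of $\Sigma$ restricted to $\Sigma-\Gamma_s^{2\epsilon}$, and $\{s\}\times(\Sigma-\Gamma_s^{2\epsilon})$ sits inside $F_s$ as a sector domain with convex boundary $\{s\}\times\partial\Gamma_s^{2\epsilon}$.

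The next step is to verify that this sector domain is a truncation of $F_s$, i.e. that the downward gradient flow $\Psi_t$ of $\widetilde\Phi$ carries every point of $F_s$ into $V_s$. This is the escaping behavior of Lemma~\ref{escaping lemma} in the $z_1$-coordinate (a point on $H_s$ has $z_1$ driven through the band toward $\gamma_s$, while the other point, being off the band, is either already outside $\Gamma_s^{2\epsilon}$ or is pushed away from the diagonal), combined with the rescaling $ZI_s=\tfrac32 I_s$ of Proposition~\ref{Ifctproperties}, which keeps the locus $I_s^{-1}(0)$ invariant. Hence $F_s=\bigcup_{t\ge 0}\Psi^{-t}\bigl(I_s^{-1}(0)\cap H_s\cap V_s\bigr)=\mathsf{C}\bigl(\{s\}\times(\Sigma-\Gamma_s^{2\epsilon})\bigr)$, which is the assertion.

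As a consistency check, the same answer assembles from the local pieces: $F_s=\bigcup_j F_{s,m_j}$, where $F_{s,m_j}=\widehat{\Sigma_{m_j}-\Gamma_s^{2\epsilon}}$ for $m_j$ adjacent to $s$ (Proposition~\ref{adjfiber}) and $F_{s,m_j}=\{s\}\times\Sigma_{m_j}$ otherwise (Proposition~\ref{nadjfiber}). Two such pieces overlap only along the corner stratum $I_s^{-1}(0)\cap C_{s,s_k}$, and since $C_{s,s_k}=\gamma_s\times\gamma_{s_k}$ lies away from the diagonal this overlap is the evident product $\{s\}\times\gamma_{s_k}$; gluing the $\Sigma_{m_j}$ along the arcs $\gamma_{s_k}$ with $s_k\neq s$ reconstitutes $\Sigma-\Gamma_s^{2\epsilon}$, and the convex completions of the adjacent pieces patch into the single completion $\widehat{\Sigma-\Gamma_s^{2\epsilon}}$.

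The main obstacle is bookkeeping rather than analysis: one must check that the product description of the Liouville structure genuinely holds on all of $I_s^{-1}(0)\cap H_s\cap V_s$, in particular near the walls $C_{s,s_k}$, and that the backward flow $\Psi^{-t}$ sweeps out the entire fiber without leaving a residue near the minima $m_j$ or near the original cylindrical ends of $\Sigma$. Both points reduce to the local analysis of $\widetilde\Phi$ from Section~3 and to the fact that the Stein potential on $Sym^2(\Sigma)$ is a product off the diagonal, so no essentially new ingredient is needed.
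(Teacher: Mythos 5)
Your proposal is correct, but it runs a genuinely different primary argument from the paper's. The paper's proof is a one-liner: it takes the union over all minima $m_j$ of the local identifications from Proposition~\ref{adjfiber} (adjacent case, giving a convex completion of $\{s\}\times(\Sigma_{m_j}-\Gamma_s^{2\epsilon})$) and Proposition~\ref{nadjfiber} (non-adjacent case, giving $\{s\}\times\Sigma_{m_j}$), leaving the reader to observe that these pieces glue along the walls $\{s\}\times\gamma_{s_k}$ and that the only completions occur at $\partial\Gamma_s^{2\epsilon}$. You do include exactly this union-of-pieces argument, but only as a consistency check; your main argument instead reruns the computation of Proposition~\ref{adjfiber} globally in one pass, identifying $I_s^{-1}(0)\cap H_s\cap V_s\cong\{s\}\times(\Sigma-\Gamma_s^{2\epsilon})$ directly, verifying that the product Liouville structure holds there because this locus avoids the smoothed neighborhood of the diagonal, and then invoking the truncation/completion mechanism of Section~2.3 together with the escaping behavior (Lemma~\ref{escaping lemma}) and the rescaling $ZI_s=\tfrac32 I_s$ (Proposition~\ref{Ifctproperties}) to conclude $F_s=\mathsf{C}\bigl(\{s\}\times(\Sigma-\Gamma_s^{2\epsilon})\bigr)$. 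The direct route is a bit heavier on verification (you must check the product structure and the escaping argument uniformly, including near the corner strata $C_{s,s_k}$, and rule out residue near ends and minima) but it avoids the patching-of-local-completions step that the paper's proof implicitly requires. Both are valid; the paper's is shorter because it leans on the two preceding propositions doing the local work.
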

\begin{proof}
    Taking the union over all minima, the proposition follows from Proposition \ref{adjfiber} and Proposition \ref{nadjfiber}.
\end{proof}

\subsection{Completions} 
In this section, we study the convex completion of sectorial pieces of the decompositions of $Sym^2(\Sigma)$, i.e. $U_{m,m}$ and $U_{m,m'}$.

Let $\Sigma_m^{-}=\Sigma_m - \Gamma_s^{2\epsilon}$ and let $\widehat{\Sigma_{m}}$ be the Liouville completion of $\Sigma_m^-$ which is equivalent to the convex completion of $\Sigma_m$ in section 2.2.

\begin{proposition}\label{sym}
    For any minimum $m$, $\widehat{U_{m,m}}$ and $Sym^2(\widehat{\Sigma_{m}})$ are Liouville isomorphic.
\end{proposition}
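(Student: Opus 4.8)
The plan is to build the Liouville isomorphism in two halves: first identify a truncation of $\widehat{U_{m,m}}$ with a truncation of $Sym^2(\widehat{\Sigma_m})$, then show that the completion operation intertwines with these truncations. Recall that $U_{m,m}$ is the closure of the stable manifold of the index-$0$ critical point $(m,m)$ of $\widetilde{\Phi}$, together with the boundary strata $H_{s_k,m}$ over the saddles $s_k$ adjacent to $m$. By Corollary \ref{sym2} (applied fiberwise over the diagonal directions, exactly as in the proof of the Corollary in Section 5.3), $U_{m,m}$ is a deformation retract of $Sym^2(\Sigma_m)$ sitting inside $Sym^2(\Sigma)$: concretely $U_{m,m} = Sym^2(\Sigma_m)\cap\{\,|\mathrm{Re}(z_i)|\le\text{(something)}\,\}$ in the local coordinates near each adjacent saddle, and away from all the bands it is literally $Sym^2(\Sigma_m^-)$. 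So I would first establish:

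\emph{Step 1 (truncation of $\widehat{U_{m,m}}$).} Show that $Sym^2(\Sigma_m^-)$ is a sectorial domain (in the sense of Definition \ref{sectordomain}) that is a truncation of $U_{m,m}$: the sectorial boundary $\partial_S$ consists of the pieces $H_{s_k,m}\cap Sym^2(\Sigma_m^-)$ over adjacent saddles, which by the proof of Proposition \ref{adjfiber} are cut out in the product region by $\{\mathrm{Re}(z_i)=\pm2\epsilon\}$ and carry the product Liouville structure $\Sigma_m^-\times D_1^-$, while the convex boundary $\partial_C$ is the convex boundary of $Sym^2(\Sigma_m^-)$ away from these bands. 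The key point is that on $U_{m,m}$ the negative Liouville flow of $\widetilde{\Phi}$ carries every point into $Sym^2(\Sigma_m^-)$ after finite time, which follows from the local-model computations (Lemma \ref{escaping lemma} and Lemma \ref{lemma2}): in the $z_1$ variable the flow in the $\mathrm{Re}$ direction is the exponential flow \eqref{exponential flow} scaled appropriately, pushing $\mathrm{Re}(z_1)$ toward $0$, hence into the band, and the $\sqrt w$ variable escapes as well. By the Proposition at the end of Section 2.3, $\widehat{U_{m,m}} = \mathsf{C}(Sym^2(\Sigma_m^-))$ up to Liouville isomorphism.

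\emph{Step 2 (truncation of $Sym^2(\widehat{\Sigma_m})$).} Do the analogous thing on the mirror side. Since $\widehat{\Sigma_m}$ is the Liouville completion of $\Sigma_m^-$ at $\partial\Gamma_s^{2\epsilon}$, it contains $\Sigma_m^-$ as a truncation, and I claim $Sym^2(\Sigma_m^-)$ is a truncation of $Sym^2(\widehat{\Sigma_m})$ whose completion at the convex boundary recovers $Sym^2(\widehat{\Sigma_m})$. This uses Theorem B (the statement that $Sym^2$ of a completion is deformation equivalent to the completion of $Sym^2$), or rather is the mechanism behind it: away from the diagonal this is just $Sym^2$ of a product/completion decomposition, and near the diagonal one uses that the smoothing $\widetilde\Phi$ was chosen (Section 5.2) to agree with the local model on the bands, so the gradient flow near the diagonal is again controlled by Lemmas \ref{escaping lemma}--\ref{lemma2}. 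Then compare the two truncations: both $\widehat{U_{m,m}}$ and $Sym^2(\widehat{\Sigma_m})$ are $\mathsf{C}$ of $Sym^2(\Sigma_m^-)$ with respect to the \emph{same} convex boundary and the same Liouville form (the restriction of $d^c\widetilde\Phi$), and completion is unique up to Liouville isomorphism, so $\widehat{U_{m,m}}\cong Sym^2(\widehat{\Sigma_m})$.

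The main obstacle is Step 2, specifically matching the Liouville structures \emph{near the diagonal} under completion. Away from the diagonal everything is a product and formal, but on the bands $\Gamma_s^{2\epsilon}$ the function $\widetilde\Phi$ is a genuine smoothing of a cone singularity, and one must check that the completion of $Sym^2(\Sigma_m^-)$ obtained by flowing out along $Z=\nabla_{g_{\widetilde\Phi}}\widetilde\Phi$ is the \emph{same} (up to the allowed deformation) whether we think of it as completing $U_{m,m}$ inside $Sym^2(\Sigma)$ or as $Sym^2$ of the completed surface — i.e. that the two Liouville forms on the enlarged region are homotopic through Liouville forms restricting to the given one on the truncation. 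I would handle this by invoking the uniqueness of completions (Section 2.3) together with the explicit local model: on each band the relevant region is $D_1^-\times\widehat{\Sigma_m}$-type, where the product Liouville structure makes the comparison transparent, and the smoothings patch by the remark following Lemma \ref{escaping lemma} (the two structural hypotheses on $\widetilde{|w|}$). A secondary subtlety is bookkeeping the corners of $U_{m,m}$ when $m$ is adjacent to more than one saddle, but since the bands around distinct saddles are disjoint in $\Sigma_m$, the corner analysis reduces to the single-saddle local model treated in Section 3 and the corner model in the Remark after Definition \ref{sector_covers}.
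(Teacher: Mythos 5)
Your overall strategy — identify a sector domain sitting inside both $\widehat{U_{m,m}}$ and $\Sym^2(\widehat{\Sigma_m})$ as a common truncation, then invoke uniqueness of completions — is exactly the paper's route. However, your choice of common truncation, namely $\Sym^2(\Sigma_m^-)$ itself, introduces a problem that the paper's argument is specifically designed to sidestep: $\Sym^2(\Sigma_m^-)$ has a codimension-two corner where \emph{both} $z_1$ and $z_2$ lie on $\partial\Sigma_m^- = \{\mathrm{Re}(z)=-2\epsilon\}$ (and this corner moreover meets the diagonal at $z_1=z_2$, where the smoothed potential $\widetilde\Phi$ is genuinely non-product). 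This is precisely the ``bad corner of a symmetric product of a manifold with boundary'' issue the introduction flags. The paper instead constructs an explicit smooth hypersurface bounding a domain $U_{m,m}^-$ that contains $\Sym^2(\Sigma_m^-)$ and is contained in both $\widehat{U_{m,m}}$ and $\Sym^2(\widehat{\Sigma_m})$: it interpolates between $\{\mathrm{Re}(z_1)=-\epsilon,\ \mathrm{Re}(z_2)\le -2\epsilon\}$, $\{\mathrm{Re}(z_2)=-\epsilon,\ \mathrm{Re}(z_1)\le -2\epsilon\}$, and the anti-diagonal slab $\{\mathrm{Re}(z_1)+\mathrm{Re}(z_2)=-3\epsilon\}$, with a smooth join on the intermediate annuli. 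That explicit rounding gives a \emph{smooth} boundary transverse to $Z$, so $U_{m,m}^-$ is a legitimate sector domain, and then the common-truncation argument goes through cleanly. Your Step 1 asserts that $\Sym^2(\Sigma_m^-)$ ``is a sectorial domain'' and a truncation, but doesn't verify transversality of $Z$ (or even smoothness of the boundary) at the corner and at the diagonal-meets-boundary locus; this is the actual technical content of the proof, not the secondary $m$-adjacent-to-several-saddles bookkeeping you flag at the end.

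A secondary issue: in Step 2 you lean on ``Theorem B (the statement that $\Sym^2$ of a completion is deformation equivalent to the completion of $\Sym^2$).'' But Theorem B is a downstream consequence of this very proposition (together with Corollary \ref{sym2} and the corollary in Section 5.3), so invoking it here is circular. You hedge by saying it is ``the mechanism behind it,'' which is the right instinct, but the argument should be run entirely through Lemmas \ref{escaping lemma}--\ref{lemma2} and the uniqueness of completions of truncations without reference to Theorem B. Replacing your truncation by the paper's explicitly rounded $U_{m,m}^-$ also makes Step 2 direct, since then there is literally nothing to match near the diagonal: the rounded domain is defined in the coordinates of the local model where $\widetilde\Phi$ is already identical in both ambient Liouville manifolds.
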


\begin{proof}
Let $U_{m,m}^- $ be the subset of $U_{m,m}$ bounded by the hypersurface defined by Re$(z_1)=-\epsilon $ when Re$(z_2) \leq - 2\epsilon $,  Re$(z_2)=-\epsilon $  when Re$(z_1) \leq - 2\epsilon $, and Re$(z_1) +$ Re$(z_2)= -3 \epsilon$ when $ -\frac{15}{8} \epsilon \leq $Re$(z_1) \leq -\frac{9}{8}\epsilon$ and $ -\frac{15}{8} \epsilon \leq $Re$(z_2) \leq -\frac{9}{8}\epsilon$ and a smooth interpolation between these when Re$(z_1) \in [-2\epsilon,-\frac{15}{8}\epsilon] \cup [-\frac{9}{8}\epsilon,- \epsilon] $ and Re$(z_2) \in [-2\epsilon,-\frac{15}{8}] \cup [-\frac{9}{8},- \epsilon]$. 

And the boundary of $U_{m,m}^-$ is smooth and transverse to the Liouville vector field. Therefore, $U_{m,m}^- $ is a sector domain.

By construction, we have $Sym^2(\Sigma_m^-)\subset U_{m,m}^- \subset Sym^2(\hat{\Sigma})$ and also $Sym^2(\Sigma_m^-)\subset U_{m,m}^- \subset \widehat{U_{m,m}}$. In both cases, the negative Liouville flow maps all points into $U_{m,m}^-$. We have $ U_{m,m}^-$ is a common truncation of both $Sym^2(\hat{\Sigma})$ and $\widehat{U_{m,m}}$. Therefore,$Sym^2(\hat{\Sigma})$ and $\widehat{U_{m,m}}$ are Liouville isotopic.  
\end{proof}



When we consider two different minima, we have the following proposition which is analogous to the previous one. 

\begin{proposition} \label{product}
    For any two different minima $m$ and $m'$, $m \neq m'$, $\widehat{U_{m,m'}}$ and $\widehat{\Sigma_{m}} \times \widehat{\Sigma_{m'}} $ are Liouville isomorphic.
\end{proposition}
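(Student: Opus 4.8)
The plan is to mimic the proof of Proposition \ref{sym}, replacing the symmetric product by an honest product and building a common truncation of the two Liouville sectors $\widehat{U_{m,m'}}$ and $\widehat{\Sigma_m}\times\widehat{\Sigma_{m'}}$. The key point is that, since $m\neq m'$, the relevant stratum of $Sym^2(\Sigma)$ lies away from the diagonal, so the Stein structure there is literally the Cartesian product $\varphi(z_1)+\varphi(z_2)$ with no smoothing required, and $U_{m,m'}$ can be identified with an honest product of pieces of $\Sigma$.

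First I would define a sector domain $U_{m,m'}^-\subset U_{m,m'}$ cut out by the hypersurfaces $\mathrm{Re}(z_1)=-\epsilon$ (on the side of $\Sigma_m$, i.e.\ where $z_1$ is near the saddle adjacent to $m$), $\mathrm{Re}(z_2)=-\epsilon$ (on the side of $\Sigma_{m'}$), together with the analogous conditions at every saddle bounding $\Sigma_m$ or $\Sigma_{m'}$. Concretely, after labelling so that $z_1$ is the point lying in $\Sigma_m$ and $z_2$ the point in $\Sigma_{m'}$, one has $U_{m,m'}\cong \Sigma_m\times\Sigma_{m'}$ and $U_{m,m'}^-\cong \Sigma_m^-\times\Sigma_{m'}^-$, where $\Sigma_m^-=\Sigma_m-\bigcup\Gamma_s^{2\epsilon}$ is the truncation appearing just before Proposition \ref{sym}. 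Because the two points never collide, there are no corner or diagonal issues: the boundary of $U_{m,m'}^-$ is smooth away from the corners coming from two saddles (one on each side), where it has the standard transverse product corner, and the Liouville vector field $Z=Z_{z_1}\oplus Z_{z_2}$ is transverse to $\partial_C$ and tangent-with-$ZI=\alpha I$ along $\partial_S$. Hence $U_{m,m'}^-$ is a sector domain in the sense of Definition \ref{sectordomain}.

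Next I would observe the two chains of inclusions
\[
\Sigma_m^-\times\Sigma_{m'}^-\;\subset\; U_{m,m'}^-\;\subset\; \widehat{\Sigma_m}\times\widehat{\Sigma_{m'}},
\qquad
\Sigma_m^-\times\Sigma_{m'}^-\;\subset\; U_{m,m'}^-\;\subset\; \widehat{U_{m,m'}},
\]
where the first follows from $\Sigma_m^-\subset\widehat{\Sigma_m}$ and likewise for $m'$, and the second from the fact that $U_{m,m'}^-$ truncates $U_{m,m'}$ and hence its completion. In both ambient spaces the negative Liouville flow carries every point into $U_{m,m'}^-$: on $\widehat{\Sigma_m}\times\widehat{\Sigma_{m'}}$ this is the product of the two one-dimensional retractions (each point of $\widehat{\Sigma_m}$ flows into $\Sigma_m^-$ and, once there, stays while the $\Gamma_s^{2\epsilon}$ bands get swept past the cut $\mathrm{Re}=-\epsilon$); on $\widehat{U_{m,m'}}$ it is exactly the truncation property. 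Therefore $U_{m,m'}^-$ is a common truncation of both, and by the Proposition following the definition of completion (completion of a truncation recovers the sector), $\widehat{U_{m,m'}}\cong \mathsf{C}(U_{m,m'}^-)\cong \widehat{\Sigma_m}\times\widehat{\Sigma_{m'}}$ as Liouville sectors, which is the claim.

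I expect the main obstacle to be the bookkeeping at the \emph{corners}: $\Sigma_m$ may be bounded by several saddles, so $U_{m,m'}^-$ is cut out by several hypersurfaces and its boundary is a sector-with-corners, and one must check that the smoothing/interpolation of these cutting hypersurfaces (as in the $U_{m,m}^-$ construction in Proposition \ref{sym}, done near each $\mathrm{Re}(z_i)\in[-2\epsilon,-\epsilon]$) keeps the boundary transverse to $Z$ and keeps the characteristic foliations $\omega$-orthogonal over intersections. Since everything here is a Cartesian product of the one-variable picture on $\Sigma$ with itself, this reduces to the already-understood one-dimensional truncation of a quadratic Stein surface, so no genuinely new analytic input is needed; the work is purely in setting up the product identification $U_{m,m'}\cong\Sigma_m\times\Sigma_{m'}$ carefully and checking the sector-domain axioms.
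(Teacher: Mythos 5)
Your proof is correct and takes exactly the paper's approach: the paper's own proof simply remarks that the argument is analogous to Proposition~\ref{sym}, using the common truncation $\Sigma_m^-\times\Sigma_{m'}^-$, and your write-up fills in those details accurately. One minor caution: the parenthetical identification $U_{m,m'}\cong\Sigma_m\times\Sigma_{m'}$ is not literal near the diagonal when $m$ and $m'$ are adjacent (the smoothing of $\widetilde{\Phi}$ perturbs the stable-manifold decomposition there), but this does not affect your argument since the truncation $\Sigma_m^-\times\Sigma_{m'}^-$ is disjoint from the diagonal and hence from the support of the smoothing.
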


\begin{proof}
    The proof is analogue to the previous one by constructing a common truncation $\Sigma_m^- \times \Sigma_{m'}^-$
\end{proof}
\section{Homological mirror symmetry}
Recall that we say that a symplectic
manifold $(X,\omega)$ and a complex manifold $Y$ are mirror in the sense of
Kontsevich's homological mirror symmetry (HMS) if there is a derived
equivalence $F(X, \omega) \cong D^bCoh(Y)$ between the Fukaya category of $X$ and the
category of coherent sheaves of $Y$.

In this section, we study
homological mirror symmetry for the second symmetric product of a $4$-punctured sphere.

\subsection{Symmetric product of four punctured sphere}

Let $\Sigma:=\mathbb{P}^1 - 4 \mbox{ points}$ be a four-punctured sphere with a separating arc $\gamma$, which decomposes $\Sigma$ into two pieces $\Sigma_{-}$ and $\Sigma_{+}$. These two pieces are a pair of pants with a stop $(\Sigma_{-}, s_1)$ and a cylinder with a stop $(\Sigma_{+}, s_2)$, glued to each other along a boundary arc.
Let $\Pi_2: =Sym^2(\Sigma)$ denote the second symmetric product of $\Sigma$ which is known to be a two-dimensional pair of pants. We can equip $\Sigma$ with a quadratic Stein structure $\varphi$ such that it has a saddle on $\gamma$ by section $4$. 

 The completion of $\Sigma_{-}$ is isomorphic to a pair of pants $P = \mathbb{P}^1 - \mbox{{3 points}}$ and the completion of $\Sigma_{+}$ is isomorphic to $\mathbb{C}^*$.
The quadratic Stein structure on $Sym^2(\Sigma)$ determines a sectorial decomposition into three sectorial pieces $U_{-,-}$, $U_{-,+}$, and $U_{+,+}$ by the decomposition theorem in section $5$. 

By proposition \ref{sym} in section $6$, we have $\widehat{U_{-,-}} \cong Sym^2(\widehat{\Sigma_{-}}) \cong Sym^2(P) \cong  (\mathbb{C}^*)^2$ and $\widehat{U_{+,+}} \cong Sym^2(\widehat{\Sigma_{+}}) \cong Sym^2(\mathbb{{C}^*}) \cong  \mathbb{C}\times \mathbb{C}^*$. By proposition \ref{product}, $\widehat{U_{-,+}} \cong \widehat{\Sigma_{-}} \times \widehat{\Sigma_{+}} \cong P \times \mathbb{C}^*$.

The first step is to check that the stop on $U_{+,+}$ corresponds to the first coordinate $u_1$ in $\mathbb{C} \times \mathbb{C}^*$. 

Recall that by example $2.20$ in \cite{MR4106794}, a Landau–Ginzburg model naturally gives rise to a Liouville sector. More precisely, given a holomorphic function $W: X \to \mathbb{C}$, the behavior of $W$ towards infinity specifies a stop on the contact boundary $\partial_\infty X$. Concretely, for $c \gg 0$, the fiber $W^{-1}(c)$ has a skeleton which can be viewed as a singular Legendrian in $\partial_\infty X$, and this Legendrian serves as the stop. 
Thus, one obtains a Liouville sector $(X,\mathfrak{f})$, where 
$
\mathfrak{f} = skel( W^{-1}(c)).
$

By Corollary~$3.9$ in \cite{MR4695507}, a sectorial boundary can be replaced by a stop given by its fiber or its skeleton. 
More precisely, if 
$
X \hookrightarrow \bar{X} \hookleftarrow F \times \mathbb{C}_{\mathrm{Re}\geq 0}
$
is the inclusion of a Liouville sector into its convex completion with complement, 
then there is a quasi-equivalence
$
\mathcal{W}(X) \;\simeq\; \mathcal{W}(\bar{X},F_0)) \simeq \mathcal{W}((\bar{X}, \mathfrak{f})
$
where $F_0$ is the projection of the fiber to the infinity boundary and $\mathfrak{f}$ is the skeleton of $F_0$.

\begin{proposition}
     $U_{+,+} \cong (\mathbb{C} \times \mathbb{C}^*, W=u_1)$ as a Liouville sector.
\end{proposition}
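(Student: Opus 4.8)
The plan is to combine the Liouville isomorphism $\widehat{U_{+,+}} \cong \mathbb{C} \times \mathbb{C}^*$, which we already have from Proposition~\ref{sym}, with a careful identification of the stop on $U_{+,+}$ as a Landau--Ginzburg-type stop for the function $W = u_1$. First I would recall that, by definition, $U_{+,+}$ is the closure of the stable manifold of $(m_+, m_+)$ in $Sym^2(\Sigma)$, and its sectorial boundary is the hypersurface $H_{s_2}$ (more precisely the relevant piece $H_{s_2, m_+}$), whose fiber we have already computed in Section~$6$: by Proposition~\ref{adjfiber}, the fiber $F_{s_2, m_+}$ is Liouville isomorphic to the completion $\widehat{\Sigma_{+}} \cong \mathbb{C}^*$. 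So the sectorial boundary has fiber $\mathbb{C}^*$, which matches the generic fiber of the projection $u_1 : \mathbb{C}\times \mathbb{C}^* \to \mathbb{C}$ onto the first coordinate — whose fibers are all copies of $\mathbb{C}^*$. This is the first consistency check.

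Next I would invoke the criterion, recalled just before the proposition from Corollary~$3.9$ of \cite{MR4695507} and Example~$2.20$ of \cite{MR4106794}, that a Liouville sector is determined (up to the relevant equivalence) by its convex completion together with the skeleton of its fiber at infinity, viewed as a stop; and that for a Landau--Ginzburg model $(X, W)$ the stop is $\operatorname{skel}(W^{-1}(c))$ for $c \gg 0$. So it suffices to check two things: (i) that the convex completion $\widehat{U_{+,+}}$ is Liouville isomorphic to $\mathbb{C}\times\mathbb{C}^*$ — this is exactly Proposition~\ref{sym} — and (ii) that under this isomorphism the stop of $U_{+,+}$, i.e. the skeleton of the fiber $F_{s_2,m_+}$ sitting inside $\partial_\infty(\mathbb{C}\times\mathbb{C}^*)$, is carried to the skeleton of $u_1^{-1}(c)$ for $c\gg 0$, which is $\operatorname{skel}(\mathbb{C}^*) = S^1$ sitting in the contact boundary over a large value of $\operatorname{Re}(u_1)$. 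Concretely I would trace through where the coordinate $z_1$ (the one that is $\approx s_2$, i.e. near the saddle band $\Gamma^{2\epsilon}_{s_2}$) goes under the truncation/completion identifications of Section~$6$: the truncated domain $U_{m,m}^-$ was cut out by conditions on $\operatorname{Re}(z_1), \operatorname{Re}(z_2)$, and the direction in which $\operatorname{Re}$ of the "trapped" coordinate escapes (Lemma~\ref{escaping lemma}, equation~\eqref{Re(z_i)}) is precisely the positive real direction, which should match the $\operatorname{Re}(u_1) \to +\infty$ end that carries the LG stop.

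The main obstacle, I expect, is step (ii): making the identification of the \emph{stop} precise rather than just the underlying Liouville manifold. One has to make sure that the sectorial boundary $H_{s_2}$, which was built from stable-manifold/$I$-function data in the local model of Section~$3$, really corresponds under the chain of Liouville isomorphisms $\widehat{U_{+,+}} \cong Sym^2(\widehat{\Sigma_+}) \cong Sym^2(\mathbb{C}^*) \cong \mathbb{C}\times \mathbb{C}^*$ to the boundary one would put on $\mathbb{C}\times\mathbb{C}^*$ to present it as the LG sector $(\mathbb{C}\times\mathbb{C}^*, u_1)$ — i.e. that the fiber $\mathbb{C}^*$ of $H_{s_2}$ is homotoped, within the contact boundary at infinity, onto $\operatorname{skel}(u_1^{-1}(c))$ and that its Legendrian skeleton matches. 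I would handle this by describing $Sym^2(\mathbb{C}^*)$ explicitly: writing $\mathbb{C}^* = \{$points on $\Sigma_+\}$ with coordinate $\zeta$, the symmetric product $Sym^2(\mathbb{C}^*)$ has coordinates given by the elementary symmetric functions $(e_1, e_2) = (\zeta_1 + \zeta_2, \zeta_1\zeta_2)$ with $e_2 \neq 0$, so $Sym^2(\mathbb{C}^*) \cong \{(e_1,e_2) : e_2 \in \mathbb{C}^*\} = \mathbb{C}\times \mathbb{C}^*$; then I would check that $U_{+,+}$, being the piece where both points have escaped to the $\operatorname{Re} > 0$ end past the saddle band, corresponds to the sub-sector where the "$u_1$-coordinate" is pushed to its large-real-part end — i.e. that the defining inequality for $U_{+,+}$ translates, after the change of coordinates, into the condition cutting out the LG sector for $W = u_1$. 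The remaining verification — that the stop is the skeleton of the fiber, hence $\operatorname{skel}(u_1^{-1}(c))$ — then follows from the recalled Corollary~$3.9$ of \cite{MR4695507} applied to both presentations, together with the fiber computation of Proposition~\ref{adjfiber}.
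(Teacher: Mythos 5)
Your proposal follows essentially the same route as the paper's proof: both use Proposition~\ref{sym} to identify $\widehat{U_{+,+}}\cong\mathbb{C}\times\mathbb{C}^*$, both pass to symmetric coordinates $(u_1,u_2)=(z_1+z_2,z_1z_2)$, and both invoke Corollary~3.9 of Ganatra--Pardon--Shende to reduce the comparison of Liouville sectors to comparing the Legendrian stop determined by the sectorial boundary $H_{s_2,m_+}$ (one point escaping, the other on $\operatorname{skel}(\Sigma_+)$) with $\operatorname{skel}(u_1^{-1}(c))$ for $c\gg0$. Your added consistency check via Proposition~\ref{adjfiber}, that the abstract fiber $F_{s_2,m_+}\cong\mathbb{C}^*$ matches the generic fiber of the projection $u_1$, is a nice sanity check not spelled out in the paper, but the skeleton-matching step — which both you and the paper treat as "one can check" — is the same argument.
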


    

\begin{proof}
Let $z$ be the coordinates on the completion $\widehat{\Sigma_+}\cong \C^*$, and for a pair of points $\{z_1,z_2\} \subset \widehat{\Sigma_+}$ we let
$
u_1:=z_1+z_2\in\C, u_2:=z_1z_2\in\C^* .
$
These are the symmetric coordinates on $Sym^2(\widehat{\Sigma_+})\cong \C\times \C^*$, hence
$\widehat{U_{+,+}}\cong \C\times \C^*$ by Proposition~\ref{sym}.

By construction of the sectorial decomposition (Section~$3$), the sectorial boundary of $U_{+,+}$ corresponds to the locus where
one point lies along the boundary arc of $\Sigma_+$ while the other point remains inside (up to a modification near the diagonal). Inside the sectorial boundary, the skeleton of the fiber $F =\{I=0\}$ corresponds to the locus where one point lies in the skeleton of $\Sigma_{+}$ and the other point is the saddle point of the boundary of $\Sigma_{+}$ which we take to be on the positive real axis.

By Corollary $3.9$ in \cite{MR4695507}, we can turn a Liouville sector into a Liouville domain with a mostly Legendrian stop. In this way, we obtain $Sym^2(\widehat{\Sigma_+})$ with 
the skeleton of the stop represented at infinity by
\[
\Lambda\;=\;\{\, (z_1, z_2)| z_2\to +\infty,\;\; z_1\in skel(\Sigma_{+})\}\subset\; \partial_\infty \widehat{U_{+,+}},
\]
which is a Legendrian stop for $\widehat{U_{+,+}}$.
The skeleton of $\Sigma_+$ is the set of points whose Liouville flow does not escape towards infinity or the boundary. This is a circle which we can take to be the unit circle.

Consider the holomorphic function
$
W:\C\times \C^*\longrightarrow \C, W(u_1,u_2)=u_1 .
$
For $c\gg 0$ the fiber
$
W^{-1}(c)=\{(z_1,z_2)\mid z_1+z_2=c\}
\subset \Sym^2(\widehat{\Sigma_+})
$
is a Stein submanifold whose skeleton corresponds to the descending manifolds of the critical points of the Stein potential function.

One can check that this skeleton corresponds to one point $z_1$ varying in a circle around the origin in $\mathbb{C}^*$ while the other point lies near $c$.

The stop induced by the Landau-Ginzburg model $(\C\times\C^*,W=u_1)$ agrees with the sectorial stop of $U_{+,+}$.
Geometrically, as $c\to+\infty$ the skeleton of $W^{-1}(c)$ converges to configurations where $z_1$ lies in the circle while $z_2$
runs out along the real positive direction, exactly the picture of the stop we obtained from the sectorial boundary of $U_{+,+}$.

Since $\widehat{U_{+,+}}\cong \C\times \C^*$ and the Legendrian stop on $\partial_\infty\widehat{U_{+,+}}$ coincides with
the stop $skel( W^{-1}(c))$ of the Landau-Ginzburg model $(\C\times\C^*,W=u_1)$. Ganatra--Pardon--Shende's
Corollary $3.9$ applies: a sectorial boundary can be replaced by the corresponding stop fiber without changing the wrapped Fukaya category.
Therefore, the two Liouville sectors
$
U_{+,+}$ and $ (\C\times \C^*,\, W=u_1)
$
agree up to Liouville deformation equivalence.

\end{proof}

\begin{corollary}
    $U_{-,+}\cup U_{+,+}$ is equivalent to the convex completion of $U_{-,+}$ along the sectorial boundary $H_{0,+}$ as a Liouville sector.
\end{corollary}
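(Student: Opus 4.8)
The plan is to recognize $U_{-,+}\cup_{H_{0,+}}U_{+,+}$ as the \emph{Liouville completion} of $U_{-,+}$ along the saddle boundary $H_{0,+}$ in the sense of Section~$2.2$, and then invoke the proposition there which identifies the Liouville completion with the convex completion up to Liouville deformation equivalence. Recall that the Liouville completion $X_1$ of a saddle sector $X$ along a saddle boundary $S$ is obtained by gluing the minimal sector $F\times D_1^-$, with $F$ the fiber of $S$ and $D_1^-$ as in Definition~\ref{D_1minus}, to $X$ along $S\cong F\times\mathbb{R}\cong F\times\partial D_1^-$. Thus the corollary reduces to the claim that $U_{+,+}$, together with its sectorial boundary $H_{0,+}$, is Liouville deformation equivalent to $\bigl(F_{0,+}\times D_1^-,\ F_{0,+}\times\partial D_1^-\bigr)$, where $F_{0,+}$ denotes the fiber of $H_{0,+}$; that is, that $U_{+,+}$ is \emph{the} minimal sector attached to $U_{-,+}$ at $H_{0,+}$.

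To see this, first I would identify the fiber: since the saddle $s$ on $\gamma$ is adjacent to the minimum $m_+$ of $\Sigma_+$, Proposition~\ref{adjfiber} gives $F_{0,+}\cong\widehat{\Sigma_+}\cong\mathbb{C}^*$. Next I would feed in the preceding proposition, $U_{+,+}\cong(\mathbb{C}\times\mathbb{C}^*,\,W=u_1)$, and observe that this Landau--Ginzburg sector splits as a product of Liouville sectors $(\mathbb{C},\,W=z)\times\mathbb{C}^*$, since the superpotential — hence the stop $\mathrm{skel}(W^{-1}(c))$ — involves only the first factor while the second factor $\mathbb{C}^*$ carries no stop. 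It then remains to identify $(\mathbb{C},W=z)$ with the model saddle sector $D_1^-$: both are Liouville sectors on the disk with a single stop whose skeleton is a point, equivalently (after converting the stop to a genuine boundary via Corollary~$3.9$ of \cite{MR4695507}) a single boundary component homeomorphic to $\mathbb{R}$, so Lemma~\ref{2dsector} supplies a deformation equivalence. Combining, $U_{+,+}\cong D_1^-\times\mathbb{C}^*\cong D_1^-\times F_{0,+}$, and under this identification the sectorial boundary $H_{0,+}\cong F_{0,+}\times\mathbb{R}$ corresponds to $\partial D_1^-\times F_{0,+}$.

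Given this, $U_{-,+}\cup_{H_{0,+}}U_{+,+}\cong U_{-,+}\cup_{H_{0,+}}(F_{0,+}\times D_1^-)$ is by construction the Liouville completion $X_1$ of $U_{-,+}$ along $H_{0,+}$, and the proposition of Section~$2.2$ then gives $X_1\cong X_2$, the convex completion of $U_{-,+}$ along $H_{0,+}$, as a Liouville sector. I would note explicitly that although $U_{-,+}$ has a second sectorial boundary component $H_{0,-}$, the entire discussion — the minimal-sector gluing, the tubular-neighborhood removal, and the deformation between the two completions — is supported in a collar of $H_{0,+}$ (recall $H_{0,-}\cap H_{0,+}=\emptyset$), so everything goes through verbatim with $H_{0,-}$ and the Liouville ends coming from the punctures of $\Sigma$ left untouched. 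It is worth remarking that one cannot instead argue by exhibiting a common truncation as in Propositions~\ref{sym} and \ref{product}, because the negative Liouville flow does not carry $U_{+,+}$ into $U_{-,+}$ (points of $U_{+,+}$ flow toward $(m_+,m_+)$), which is precisely why the completion proposition is needed.

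The main obstacle I expect is the compatibility in the second paragraph: verifying that the product decomposition of $U_{+,+}$ is compatible with the gluing locus, i.e. that the fiber $F_{0,+}\subset H_{0,+}$ really corresponds to the slice $\{z_1\in\mathrm{skel}(\Sigma_+),\ z_2\to+\infty\}$ appearing in the proof of the preceding proposition, and hence to the $F_{0,+}$-factor of $F_{0,+}\times\partial D_1^-$. This means matching the explicit description of $H_{0,+}$ from Proposition~\ref{hyperchar} (one point near the saddle on the positive side, the other running off into $\Sigma_+$) with the symmetric coordinates $u_1=z_1+z_2,\ u_2=z_1z_2$ on $\Sym^2(\widehat{\Sigma_+})$, and tracking this identification through the chain of deformation equivalences — which must themselves be performed rel a neighborhood of $H_{0,+}$. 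The remaining steps (the fiber computation, the product splitting of the Landau--Ginzburg model, and the appeal to the Section~$2.2$ proposition) are then essentially bookkeeping.
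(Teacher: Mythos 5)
The paper states this as a corollary with no proof given, so there is no argument in the paper to compare against; your proposal is the justification that the paper leaves implicit. Your route is sound: reduce the statement to identifying $U_{+,+}$ (with its sectorial boundary $H_{0,+}$) as the minimal sector $F_{0,+}\times D_1^-$, recognize the resulting gluing as the Liouville completion $X_1$ of $U_{-,+}$ along $H_{0,+}$ in the sense of Section~$2.2$, and then invoke the Proposition there to pass to the convex completion $X_2$. The fiber identification $F_{0,+}\cong\widehat{\Sigma_+}\cong\mathbb{C}^*$ via Proposition~\ref{adjfiber}, the splitting of the Landau--Ginzburg model $(\mathbb{C}\times\mathbb{C}^*,W=u_1)$ as $(\mathbb{C},W=z)\times\mathbb{C}^*$ because $W$ depends only on the first factor, and the use of Lemma~\ref{2dsector} to match $(\mathbb{C},W=z)$ with $D_1^-$ are all the right moves, and your remark that a common-truncation argument in the style of Propositions~\ref{sym} and \ref{product} cannot work here (since the negative Liouville flow does not carry $U_{+,+}$ into $U_{-,+}$) is correct and clarifies why the completion machinery of Section~$2.2$ is genuinely needed.

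The one place you should tighten the write-up is the compatibility step you flag yourself: after chaining the deformation equivalences $U_{+,+}\cong(\mathbb{C}\times\mathbb{C}^*,W=u_1)\cong D_1^-\times F_{0,+}$, you need these deformations to be performed rel a collar of $H_{0,+}$ so that the identification of the glued boundary with $F_{0,+}\times\partial D_1^-$ is not just abstract but matches the embedding $H_{0,+}\hookrightarrow U_{-,+}$. Since the Stein structure near $\gamma$ is the local model of Section~$3$ by construction, and $H_{0,+}$, its fiber, and its neighborhood $\mathcal{N}_{0,+}$ are all described explicitly there (Proposition~\ref{hyperchar} and the definition of $I$), this is a local-model computation rather than a soft deformation argument, and should be carried out at the level of the coordinates $(z,w)$ or $(z_1,z_2)$ near $\Gamma_s^{2\epsilon}$ rather than in the $(u_1,u_2)$ coordinates on the completion. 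You should also note that since $H_{0,-}\cap H_{0,+}=\emptyset$, one may first restrict attention to a one-sided collar of $H_{0,+}$ in $U_{-,+}\cup U_{+,+}$ disjoint from $H_{0,-}$, which makes the reduction to the Section~$2.2$ Proposition (stated for a single saddle boundary) legitimate; you mention this, and it deserves to be in the final write-up.
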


The key calculation is to show that $U_{-,-}$ is Liouville isotopic to $(\mathbb{C}^*)^2$ with a sectorial boundary at $z_1 + z_2 \to +\infty $ and $U_{-,+} \cup U_{+,+}$ is the product of a pair of pants and a cylinder with a stop.

\begin{proposition}
    $U_{-,-} \cong ((\mathbb{C}^*)^2, W=u_1+u_2)$ as a Liouville sector.
\end{proposition}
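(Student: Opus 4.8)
The plan is to run the same argument as in the proof just given that $U_{+,+}\cong(\mathbb{C}\times\mathbb{C}^*,W=u_1)$, now with $\widehat{\Sigma_+}\cong\mathbb{C}^*$ replaced by $\widehat{\Sigma_-}\cong P$. The ingredients are: (i) an explicit Liouville identification $\widehat{U_{-,-}}\cong(\mathbb{C}^*)^2$ under which $z_1+z_2$ becomes, up to an additive constant, the superpotential $u_1+u_2$; (ii) a description of the sectorial boundary $H_{0,-}=\partial U_{-,-}$ as a stop sitting in the direction $z_1+z_2\to+\infty$; (iii) the identification of that stop with the Landau--Ginzburg stop of $W=u_1+u_2$; and (iv) an application of Corollary $3.9$ of \cite{MR4695507} to trade the sectorial boundary for that stop.

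For (i) I would begin from Proposition \ref{sym}, which gives $\widehat{U_{-,-}}\cong Sym^2(\widehat{\Sigma_-})\cong Sym^2(P)$, and realize $P=\mathbb{C}\setminus\{0,1\}$ with coordinate $z$ so that the end created by completing the gluing arc $\gamma$ is the puncture at $z=\infty$ and $skel(P)$ is compact. Writing $\sigma_1=z_1+z_2$, $\sigma_2=z_1z_2$ for the symmetric functions of a pair $\{z_1,z_2\}$, the map $\{z_1,z_2\}\mapsto(\sigma_1,\sigma_2)$ identifies $Sym^2(\mathbb{C})$ with $\mathbb{C}^2$, and the conditions $z_i\neq0$, $z_i\neq1$ delete exactly the two affine lines $\{\sigma_2=0\}$ and $\{\sigma_2-\sigma_1+1=0\}$, which meet only at $(1,0)$. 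Setting
\[
u_1:=z_1z_2=\sigma_2,\qquad u_2:=-(z_1-1)(z_2-1)=\sigma_1-\sigma_2-1 ,
\]
gives an affine automorphism of $\mathbb{C}^2$ carrying those two lines to the coordinate axes, hence an isomorphism $Sym^2(P)\cong(\mathbb{C}^*)^2$, under which $z_1+z_2=\sigma_1=u_1+u_2+1$. I would then upgrade this from an isomorphism of complex manifolds to a Liouville equivalence, using that the smoothed symmetric-product structure on $Sym^2(P)$ is Weinstein homotopic to the standard Weinstein structure on $(\mathbb{C}^*)^2$.

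For (ii)--(iv) I would argue exactly as in the $U_{+,+}$ case. By the construction of the decomposition (Theorem \ref{mainthm}) and the local model, $H_{0,-}=\partial U_{-,-}$ is, near the saddle, the locus where one point travels along $\gamma$ while the other remains in $\Sigma_-$; after completion this becomes the stop $\Lambda=\{(z_1,z_2)\mid z_2\to+\infty,\ z_1\in skel(\Sigma_-)\}\subset\partial_\infty\widehat{U_{-,-}}$, i.e. the stop in the direction $z_1+z_2\to+\infty$, and by Proposition \ref{adjfiber} its fiber is $\widehat{\Sigma_-}=P$. On the other side, for $c\gg0$ the fiber of $W=u_1+u_2$ on $(\mathbb{C}^*)^2$ is $W^{-1}(c)=\{u_1+u_2=c\}\cong\mathbb{C}\setminus\{0,c\}$, again a pair of pants Liouville isomorphic to $P$, and $skel(W^{-1}(c))$ is the Landau--Ginzburg stop. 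Using $z_1+z_2=u_1+u_2+1$, the direction of the sectorial stop $\Lambda$ agrees with the direction $W\to+\infty$, so by Corollary $3.9$ of \cite{MR4695507} (together with Example $2.20$ of \cite{MR4106794}) I would conclude
\[
U_{-,-}\ \cong\ (\widehat{U_{-,-}},\Lambda)\ \cong\ ((\mathbb{C}^*)^2,\ W=u_1+u_2) .
\]

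The main obstacle, as in the $U_{+,+}$ case, is step (iii): verifying that the mostly-Legendrian stop $\Lambda$ coming from the sectorial boundary is genuinely Legendrian isotopic, inside $\partial_\infty(\mathbb{C}^*)^2$, to $skel(W^{-1}(c))$ --- that is, that as $c\to+\infty$ the vanishing configurations cut out by $W^{-1}(c)$ degenerate precisely to the configurations $\{z_2\to+\infty,\ z_1\in skel(\Sigma_-)\}$. This requires pinning down $skel(\Sigma_-)$, the skeleton of the pair-of-pants-with-stop $\Sigma_-$, and tracking it through the completion and the coordinate change above. A subsidiary and more routine point is upgrading the identification $Sym^2(P)\cong(\mathbb{C}^*)^2$ from a holomorphic to a Liouville one.
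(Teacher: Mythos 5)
Your proposal takes essentially the same approach as the paper: the identical symmetric coordinate change $u_1=z_1z_2$, $u_2=-(z_1-1)(z_2-1)$ giving $z_1+z_2-1=u_1+u_2$, the same description of the sectorial stop $\Lambda$ in terms of $z_2\to+\infty$ with $z_1$ in the theta-graph skeleton of $P$, the same Landau--Ginzburg fiber analysis, and the same appeal to Corollary~3.9 of \cite{MR4695507}. The step you flag as the main obstacle (matching the mostly-Legendrian stop to $\skel(W^{-1}(c))$) is precisely the step the paper also handles informally, so your outline reflects the paper's argument faithfully, including its level of detail.
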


\begin{proof}
Let $z$ be the affine coordinate on the completion $\widehat{\Sigma_-}\cong P=\PP^1\setminus\{0,1,\infty\}$, and for a pair of points $\{z_1,z_2\}\subset \widehat{\Sigma_-}$ set
$u_1:=z_1z_2\in\C^*$ and $u_2:=-(z_1-1)(z_2-1)\in\C^*.$
These are symmetric coordinates on $\Sym^2(\widehat{\Sigma_-})\cong(\C^*)^2$, hence
$\widehat{U_{-,-}}\cong (\C^*)^2$ by Proposition~\ref{sym}. In these coordinates one has
$z_1+z_2 -1  \;=\; u_1+u_2$,
so the potential $W=u_1+u_2$ on $(\C^*)^2$ is the same as $W(u_1,u_2)=z_1+z_2-1$.

By construction of the sectorial decomposition (Section~3), the sectorial boundary of $U_{-,-}$ corresponds to the locus where
one point lies along the distinguished boundary end of $\Sigma_-$ while the other point remains inside (up to a modification near the diagonal). Inside the sectorial boundary, the skeleton of the boundary fiber $F=\{I=0\}$ corresponds to the locus where one point lies in the skeleton of $\Sigma_-$ and the other point is the boundary saddle which we take to be a point on the positive real axis close to $\infty$.

By Corollary~3.9 in Ganatra--Pardon--Shende, we can turn a Liouville sector into a Liouville domain with a mostly Legendrian stop. In this way, we obtain $\Sym^2(\widehat{\Sigma_-})$ with 
the skeleton of the stop represented at infinity by
\[
\Lambda\;=\;\{\, (z_1, z_2)\mid z_2\to +\infty,\;\; z_1\in \skel(\Sigma_{-})\,\}\subset\partial_\infty \widehat{U_{-,-}},
\]
which is a Legendrian stop for $\widehat{U_{-,-}}$.
Here $\skel(\Sigma_-)$ is the standard trivalent theta graph $\Gamma_P\subset P$.

Consider the holomorphic function
$
W:(\C^*)^2\longrightarrow \C$ given by $W(u_1,u_2)=u_1+u_2$. 
For $c\gg 0$, the fiber
$
W^{-1}(c)\;=\;\{(u_1,u_2)\mid u_1+u_2=c\}\ \subset\ \Sym^2(\widehat{\Sigma_-})
$
is a Stein submanifold whose skeleton is given by the descending manifolds of a quadratic Stein potential.

One can check that this skeleton corresponds to one point $z_1$ constrained to the pair-of-pants skeleton $\Gamma_P\subset P$ while the other point $z_2$ lies near $c$ and escapes along the chosen positive end.

The stop induced by the Landau--Ginzburg model $((\C^*)^2,W=u_1+u_2)$ agrees with the sectorial stop of $U_{-,-}$.
Geometrically, as $c\to+\infty$ the skeleton of $W^{-1}(c)$ converges to $z_1\in\Gamma_P$ while $z_2$
runs out along the positive real direction, exactly the picture of the stop we obtained from the sectorial boundary of $U_{-,-}$.

Since $\widehat{U_{-,-}}\cong (\C^*)^2$ and the Legendrian stop on $\partial_\infty\widehat{U_{-,-}}$ coincides with
the stop $\skel(W^{-1}(c))$ of the Landau-Ginzburg model $((\C^*)^2,W=u_1+u_2)$, Ganatra--Pardon--Shende's
Corollary~3.9 applies: a sectorial boundary can be replaced by the corresponding stop fiber without changing the wrapped Fukaya category.
Therefore, the two Liouville sectors
$U_{-,-}$ and$ ((\C^*)^2,\, W=u_1+u_2)$
agree up to Liouville deformation equivalence. \qedhere
\end{proof}

\begin{proposition}
    $ U_{-,+} \cup U_{+,+} \cong \widehat{\Sigma_{-}} \times \Sigma_{+}$ as a Liouville sector, i.e. a product of a pair of pants with a cylinder with one stop.
\end{proposition}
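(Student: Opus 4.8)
The plan is to combine the preceding Corollary with Proposition~\ref{product} and a factorwise completion argument. By that Corollary, $U_{-,+}\cup U_{+,+}$ is Liouville isomorphic to the convex completion of $U_{-,+}$ along its sectorial boundary $H_{0,+}=H_{s,m_+}$, so it suffices to identify this completion with $\widehat{\Sigma_-}\times\Sigma_+$, the product of the pair of pants $P=\widehat{\Sigma_-}$ with the cylinder with one stop $\Sigma_+$.

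First I would make the product structure of $U_{-,+}$ explicit near $H_{0,+}$. In $U_{-,+}$ one of the two points lies in $\Sigma_-$ and the other in $\Sigma_+$ (up to the modification near the band $\Gamma_s^{2\epsilon}$), and away from the diagonal the Stein structure on $Sym^2(\Sigma)$ is the Cartesian product $\varphi(z_1)+\varphi(z_2)$. The key point is that in the single-saddle case there is no corner: $H_{0,-}\cap H_{0,+}=\varnothing$, and $H_{0,+}$ is precisely the locus where the point in $\Sigma_-$ reaches the saddle $s$ while the point in $\Sigma_+$ stays deep inside $\Sigma_+$, uniformly away from $\gamma$. Hence a neighborhood of $H_{0,+}$ stays uniformly off the diagonal and there $U_{-,+}$ is honestly a product: a collar of the sectorial boundary $\partial_S\Sigma_-$ of the first factor, times the fiber $\widehat{\Sigma_+}$. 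This is consistent with Proposition~\ref{adjfiber}, which gives $F_{s,m_+}\cong\{s\}\times\widehat{\Sigma_+}$; and crossing $H_{0,+}$ into $U_{+,+}$ pushes the point of $\Sigma_-$ across $\gamma$, i.e.\ into the cylindrical end of $\widehat{\Sigma_-}$.

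Next I would carry out the completion one factor at a time. Since a sectorial boundary has the local model $F\times\mathbb{R}$ and the convex completion along it commutes with the product structure just described, completing $U_{-,+}$ along $H_{0,+}$ fills in only the sectorial end of the $\Sigma_-$-factor — replacing $\Sigma_-$ by its Liouville completion $\widehat{\Sigma_-}=P$ — while the $\Sigma_+$-factor, which still carries the untouched sectorial boundary $H_{0,-}$, remains the sector $\Sigma_+$, a cylinder with one stop. Concretely, I would rerun the truncation argument of Propositions~\ref{sym} and~\ref{product}, now keeping the $H_{0,-}$ end: exhibit $\Sigma_-^-\times D$ — with $\Sigma_-^-=\Sigma_--\Gamma_s^{2\epsilon}$ a Liouville domain exhausting $\widehat{\Sigma_-}$, and $D$ a truncation of the sector $\Sigma_+$ retaining its sectorial boundary — as a common truncation of both $U_{-,+}\cup_{H_{0,+}}U_{+,+}$ and $\widehat{\Sigma_-}\times\Sigma_+$, into which the negative Liouville flow carries every point; their completions then agree. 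Combining with the Corollary gives $U_{-,+}\cup U_{+,+}\cong\widehat{\Sigma_-}\times\Sigma_+$.

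The main obstacle I expect is the verification that the convex completion furnished by the Corollary is genuinely compatible with the product decomposition — that attaching $U_{+,+}$ along $H_{0,+}$ amounts to completing exactly the $\Sigma_-$-factor, and neither couples the two factors nor interacts with the regularized diagonal. The feature that makes this tractable, and which I would rely on, is the disjointness $H_{0,-}\cap H_{0,+}=\varnothing$: with a single saddle there is no corner, so all the diagonal-smoothing subtleties of the Section~3 local model remain confined to the interiors of $U_{-,-}$ and $U_{+,+}$ — already handled by Proposition~\ref{sym} and the earlier identification of $U_{+,+}$ — leaving a clean product region along $H_{0,+}$ where the completion can be performed factorwise.
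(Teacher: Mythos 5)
Your overall route is sound and matches the spirit of the paper's argument: both proofs invoke the preceding Corollary to replace $U_{-,+}\cup U_{+,+}$ by the convex completion of $U_{-,+}$ along $H_{0,+}$, and both ultimately rest on the common-truncation machinery used to prove Propositions~\ref{sym} and~\ref{product}. Where the paper proceeds by first identifying the \emph{full} completion $\widehat{U_{-,+}}\cong P\times\C^*$ via Proposition~\ref{product} and then (as in the two preceding propositions) re-inserting the stop at $H_{0,-}$ by matching the mostly-Legendrian skeleton of $F_{0,-}$ with the stop of $\Sigma_+$, you instead perform a \emph{partial} completion directly — exhibiting a sector-domain truncation $\Sigma_-^-\times D$ that retains the $H_{0,-}$ end as a common truncation. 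That is a genuinely different and, if carried through, arguably cleaner route, because it avoids computing a Legendrian skeleton.

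However, there is a wrong intermediate claim in your justification that you explicitly say you ``rely on'': that $H_{0,+}$ is the locus where the $\Sigma_+$-point stays \emph{deep inside} $\Sigma_+$, uniformly away from $\gamma$, and hence that a neighborhood of $H_{0,+}$ avoids the diagonal. This is false. By Proposition~\ref{hyperchar}, $H_{0,+}=\{\operatorname{Re}(z_0)=c(\sqrt{w_0})\}$; on the diagonal $w_0=0$ one has $c(0)\in(0,\epsilon]$, so $H_{0,+}$ meets the diagonal along $\{z_1=z_2,\ \operatorname{Re}(z_1)=c(0)\}$, which lies inside the band $\Gamma_s^{\epsilon}$. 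The point flowing to $m_+$ ranges over the entire stable manifold of $m_+$ in $\Sigma_+$, not a region bounded away from $\gamma$, so the diagonal smoothing is \emph{not} confined to the interiors of $U_{-,-}$ and $U_{+,+}$ as you assert — it runs right up to and across $H_{0,+}$. The disjointness $H_{0,-}\cap H_{0,+}=\varnothing$ controls the corners, not the diagonal; these are separate issues.

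Fortunately this does not destroy the argument, only the given justification for it: your common truncation $\Sigma_-^-\times D$ does sit uniformly off the diagonal (the real parts of the two points differ by at least $2\epsilon$), so it carries the unperturbed product Liouville structure, contains all the relevant critical points of the convex completion of $U_{-,+}$ (the minimum $(m_-,m_+)$ and the boundary saddle $(s,m_-)$ on $H_{0,-}$), and the completion argument goes through. The fix is to drop the claim that $U_{-,+}$ is ``honestly a product near $H_{0,+}$'' and instead argue purely at the level of the truncation, where the product structure really is honest. You should also phrase the truncation as a truncation of the \emph{convex completion of $U_{-,+}$ along $H_{0,+}$} (as you do in your first paragraph) rather than of the set $U_{-,+}\cup U_{+,+}$ itself, since the latter contains the fixed point $(m_+,m_+)\notin\Sigma_-^-\times D$ and the truncation condition fails there.
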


\begin{proof}
    By proposition \ref{product}, $\widehat{U_{-,+}} \cong \widehat{\Sigma_{-}} \times \widehat{\Sigma_{+}} \cong P \times \mathbb{C}^*$. By corollary 7.2, we attach $U_{+,+}$ to remove a stop corresponding to the sectorial boundary $H_{0,+}$. So we only need to find the stop that corresponds to the sectorial boundary $H_{0,-}$. As in the previous proof, it is enough to find the mostly Legendrian skeleton of the fiber $F_{0,-}$.
\end{proof}
\subsection{HMS of two-dimensional pair of pants}

As the wrapped Fukaya category of $U_{+,+}$ is zero, we can glue back $U_{+,+}$ to $U_{-,+}$ to remove a stop. We have 
$ U_{-,+} \cup U_{+,+} \cong \Sigma_{+} \times \Sigma_{-}$.

By the push-out diagram in Ganatra-Pardon-Shende \cite{MR4695507} to glue Liouville sectors, we have the following push-out diagram.

\[\begin{tikzcd}
	{\mathcal{W}(P)} & {\mathcal{W}(U_{-,+}\cup U_{+,+})} \\
	{\mathcal{W}(U_{-,-})} & {\mathcal{W}(\text{Sym}^2(\Sigma))}
	\arrow[from=1-1, to=1-2]
	\arrow[from=1-1, to=2-1]
	\arrow[from=1-2, to=2-2]
	\arrow[from=2-1, to=2-2]
\end{tikzcd}\]

By a classical result of algebraic geometry, we also have the following push-out diagram.
\[\begin{tikzcd}
	{D^b\text{Coh}(\{xy=0\})} & {D^b\text{Coh}(\{xy=0\}\times \mathbb{C})} \\
	{D^b\text{Coh}(\mathbb{C}_{z=0}^2)} & {D^b\text{Coh}(\{xyz=0\})}
	\arrow[from=1-1, to=1-2]
	\arrow[from=1-1, to=2-1]
	\arrow[from=1-2, to=2-2]
	\arrow[from=2-1, to=2-2]
\end{tikzcd}\]

Several of the categories appearing in these two diagrams are related to each other by homological mirror symmetry results for the pair of pants and for toric varieties. We have
 $\mathcal{W}(P) \cong {D^b\text{Coh}(\{xy=0\})}$ by Abouzaid-Auroux-Efimov-Katzarkov-Orlov \cite{MR3073884} and Gammage-Shende \cite{MR4578536}. 
We have ${\mathcal{W}(U_{-,-})} \cong {D^b\text{Coh}(\{\mathbb{C}^2\})}$ by Abouzaid and Fang-Liu-Treumann-Zaslow. 
In addition, we have
$\mathcal{W}(U_{-,+} \cup U_{+,+}) \cong \mathcal{W}(\widehat{\Sigma_{-}} \times \Sigma_{+}) \cong {D^b\text{Coh}(\{xy=0\}\times \mathbb{C})}$. by Gammage-Shende or by the mirror of products.
 
\begin{proposition} \label{comm1}
The following diagram commutes. 
    \[\begin{tikzcd}
	{\mathcal{W}(P)} & {\mathcal{W}(U_{-,-})} \\
	{D^b\text{Coh}(\{xy=0\})} & {D^b\text{Coh}(\mathbb{C}^2)}
	\arrow[from=1-1, to=1-2]
	\arrow[from=1-1, to=2-1]
	\arrow[from=1-2, to=2-2]
	\arrow[from=2-1, to=2-2]
\end{tikzcd}\]
\end{proposition}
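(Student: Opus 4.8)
The plan is to recognise the top and the bottom arrows of the square as two manifestations of a single functor --- ``inclusion of the Landau--Ginzburg fiber'' on the symplectic side, and ``pushforward along the toric boundary divisor'' on the algebraic side --- and then to invoke the functoriality of homological mirror symmetry for toric varieties.

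First I would make the top arrow explicit. In the sectorial descent package of \cite{MR4695507}, the functor $\mathcal{W}(P)\to\mathcal{W}(U_{-,-})$ occurring in the pushout square is the one induced by the inclusion of Liouville sectors $\mathcal{N}(H_{0,-})\hookrightarrow U_{-,-}$, where $\mathcal{N}(H_{0,-})\cong F_{0,-}\times T^*[0,1]$ is a sectorial neighbourhood of the separating hypersurface, so that $\mathcal{W}(\mathcal{N}(H_{0,-}))\cong\mathcal{W}(F_{0,-})$. By Proposition~\ref{adjfiber} the fiber $F_{0,-}$ is Liouville isomorphic to $\widehat{\Sigma_-}\cong P$, and under the identification $U_{-,-}\cong\bigl((\mathbb{C}^*)^2,\,W=u_1+u_2\bigr)$ established above this fiber is carried to the smooth Landau--Ginzburg fiber $W^{-1}(c)\cong P$, $c\gg0$, and $H_{0,-}$ to the sectorial boundary it cuts out. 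Hence the top arrow is the cup functor $\mathcal{W}\bigl(W^{-1}(c)\bigr)\to\mathcal{W}\bigl((\mathbb{C}^*)^2,W\bigr)$ of this Landau--Ginzburg model, which on objects sends a Lagrangian arc $L\subset W^{-1}(c)$ to the product $L\times\rho$ of $L$ with a ray $\rho$ running off to $+\infty$ in the $W$-direction.

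Next I would identify the bottom arrow. The superpotential $W=u_1+u_2$ on $(\mathbb{C}^*)^2$ is the Hori--Vafa mirror superpotential of the affine toric surface $\mathbb{C}^2$, and its general fiber $W^{-1}(c)$ is the mirror of the toric boundary divisor $\partial_{\mathrm{tor}}\mathbb{C}^2=\{xy=0\}$; the closed embedding $\{xy=0\}\hookrightarrow\mathbb{C}^2$ is exactly the one whose derived pushforward $i_*$ is the bottom arrow of the square. So the proposition reduces to the statement that, under the equivalences $\mathcal{W}(P)\cong D^b\mathrm{Coh}(\{xy=0\})$ of \cite{MR3073884,MR4578536} and $\mathcal{W}((\mathbb{C}^*)^2,W)\cong D^b\mathrm{Coh}(\mathbb{C}^2)$ of Abouzaid and Fang-Liu-Treumann-Zaslow, the cup functor intertwines $i_*$. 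I would verify this on generators: $\mathcal{W}(P)$ is generated by cocore arcs, two of which, $A_1$ and $A_2$, correspond under \cite{MR3073884,MR4578536} to the structure sheaves $\mathbb{C}[x]$, $\mathbb{C}[y]$ of the two branches of $\{xy=0\}$; the cup functor carries $A_i$ to a thimble Lagrangian $A_i\times\rho$, and under the toric mirror equivalence these become the structure sheaves $\mathcal{O}_{\{y=0\}}$, $\mathcal{O}_{\{x=0\}}$ of the coordinate lines in $\mathbb{C}^2$, i.e. precisely $i_*\mathbb{C}[x]$ and $i_*\mathbb{C}[y]$. Since the induced maps on morphism spaces match as well --- wrapped Floer cohomology in $W^{-1}(c)$ maps into wrapped Floer cohomology in $(\mathbb{C}^*)^2$ just as $\mathrm{Ext}$ over $\mathbb{C}[x,y]/(xy)$ maps into $\mathrm{Ext}$ over $\mathbb{C}[x,y]$ --- and both composite functors are exact and colimit-preserving, the square commutes up to natural equivalence.

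The main obstacle I anticipate is the naturality needed in the first step. The identification $U_{-,-}\cong((\mathbb{C}^*)^2,W=u_1+u_2)$ was obtained only up to Liouville deformation and after the stop-replacement of Corollary~3.9 of \cite{MR4695507}, so I would have to make sure that the abstract Ganatra-Pardon-Shende sectorial-inclusion functor is transported, through all of these deformations, to the honest Landau--Ginzburg cup functor --- and, relatedly, that the equivalences of \cite{MR3073884,MR4578536} for $P$ and of Abouzaid/Fang-Liu-Treumann-Zaslow for $\mathbb{C}^2$ are taken in compatible normalisations (same generating arcs, same Lagrangian-to-sheaf dictionary). Once this bookkeeping is in place, commutativity of the square is essentially the functoriality of toric homological mirror symmetry with respect to the inclusion of the toric boundary divisor.
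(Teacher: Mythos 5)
Your proposal is correct and follows essentially the same route as the paper: the paper's entire proof is the citation to Hanlon--Hicks \cite{MR4394684} and Gammage--Shende \cite{MR4578536}, which is exactly the ``functoriality of toric homological mirror symmetry with respect to the inclusion of the toric boundary divisor'' that you arrive at in your final paragraph. Your fleshing-out of the top arrow as the Landau--Ginzburg cup functor and the bottom arrow as $i_*$, together with the generator check, is precisely the content those references supply, and your flagged concern about transporting the GPS sectorial-inclusion functor through the deformations and matching normalisations is the genuine bookkeeping that the Hanlon--Hicks framework is invoked to handle.
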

\begin{proof}
    This is by Hanlon-Hicks \cite{MR4394684} and Gammage-Shende \cite{MR4578536}.
    This is by Hanlon-Hicks \cite{MR4394684} and Gammage-Shende \cite{MR4578536}.
\end{proof}
\begin{proposition} \label{comm2}
The following diagram commutes. 
\[\begin{tikzcd}
	{\mathcal{W}(P)} & {\mathcal{W}(U_{-,+}\cup U_{+,+})} \\
	{D^b\text{Coh}(\{xy=0\})} & {D^b\text{Coh}(\{xy=0\}\times \mathbb{C})}
	\arrow[from=1-1, to=1-2]
	\arrow[from=1-1, to=2-1]
	\arrow[from=1-2, to=2-2]
	\arrow[from=2-1, to=2-2]
\end{tikzcd}\]

\end{proposition}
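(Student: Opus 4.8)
The plan is to prove commutativity as the ``product'' analogue of Proposition~\ref{comm1}: where that square required the toric-monodromy input of Hanlon--Hicks \cite{MR4394684}, here the target Liouville sector is genuinely a product, so the square reduces to the Künneth formula for wrapped Fukaya categories together with the pair-of-pants mirror equivalence already fixed as the left vertical.

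First I would make the top arrow explicit. By sectorial descent \cite{MR4695507}, the functor $\mathcal{W}(P)\to\mathcal{W}(U_{-,+}\cup U_{+,+})$ in the pushout of Section~$7$ is the cup functor of the completed fiber of the sectorial hypersurface $H_{0,-}$ separating $U_{-,-}$ from $U_{-,+}\cup U_{+,+}$; by the fiber computations of Section~$6$ this completed fiber is $\widehat{\Sigma_-}\cong P$. Using the identification $U_{-,+}\cup U_{+,+}\cong\widehat{\Sigma_-}\times\Sigma_+$ of the preceding proposition together with the Künneth quasi-equivalence $\mathcal{W}(\widehat{\Sigma_-}\times\Sigma_+)\simeq\mathcal{W}(\widehat{\Sigma_-})\otimes\mathcal{W}(\Sigma_+)$, the hypersurface $H_{0,-}$ becomes $\widehat{\Sigma_-}\times(\partial\Sigma_+)$ with one-point fiber, and naturality of the cup functor under products of Liouville sectors identifies the top arrow with $\mathrm{id}_{\mathcal{W}(\widehat{\Sigma_-})}\otimes c$, where $c\colon\mathrm{Perf}(k)\simeq\mathcal{W}(T^*[0,1])\to\mathcal{W}(\Sigma_+)$ is the cup functor of the end of $\Sigma_+$ glued to $\Sigma_-$. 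Since $c(k)=D_+$ is the linking disk of that end, the top arrow is $(-)\boxtimes D_+$.

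Next I would identify the mirror of $D_+$ and the bottom arrow. The cylinder-with-one-stop $\Sigma_+$ is mirror to $\mathbb{A}^1$, i.e.\ $\mathcal{W}(\Sigma_+)\simeq D^b\mathrm{Coh}(\mathbb{C})$, with the cocore of the stop mirror to $\mathcal{O}_{\mathbb{C}}$ and the linking disk $D_+$ of the \emph{other} end mirror to the skyscraper $\mathcal{O}_0$; this is a one-dimensional, essentially elementary, instance of the punctured-sphere mirror symmetry used elsewhere in this section. On the algebraic side, in the Zariski/conductor pushout exhibiting $D^b\mathrm{Coh}(\{xyz=0\})$ as the gluing of $D^b\mathrm{Coh}(\mathbb{C}^2_{z=0})$ and $D^b\mathrm{Coh}(\{xy=0\}\times\mathbb{C})$ along $D^b\mathrm{Coh}(\{xy=0\})$, the arrow $D^b\mathrm{Coh}(\{xy=0\})\to D^b\mathrm{Coh}(\{xy=0\}\times\mathbb{C})$ is pushforward along the closed immersion $\{xy=0\}=\{xy=0\}\times\{0\}\hookrightarrow\{xy=0\}\times\mathbb{C}$, which under the external-product decomposition of the target is $(-)\boxtimes\mathcal{O}_0$. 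The right vertical was constructed as the external product of the left vertical with $\mathcal{W}(\Sigma_+)\simeq D^b\mathrm{Coh}(\mathbb{C})$ (``by the mirror of products''), so it carries $(-)\boxtimes D_+$ to $(-)\boxtimes\mathcal{O}_0$; combining this with the two previous identifications, both composites around the square become $L\mapsto\Phi_P(L)\boxtimes\mathcal{O}_0$, where $\Phi_P$ is the left vertical, so the square commutes up to quasi-isomorphism.

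The step I expect to be the main obstacle is coherence rather than any individual identification: one must check that the Künneth quasi-equivalence is compatible with the GPS cup functors on each tensor factor (so that the top arrow really factors as $\mathrm{id}\otimes c$), and, on the B-side, that it is matched through the fixed vertical equivalences with the external-product decomposition $D^b\mathrm{Coh}(\{xy=0\}\times\mathbb{C})\simeq D^b\mathrm{Coh}(\{xy=0\})\otimes D^b\mathrm{Coh}(\mathbb{C})$. Each constituent---naturality of the cup functor under products and the Künneth theorem \cite{MR4695507}, the elementary HMS of the once-stopped cylinder, the pair-of-pants equivalence \cite{MR3073884, MR4578536}, and the tautology $i_*=(-)\boxtimes\mathcal{O}_0$---is either elementary or already in the literature, so the remaining work consists of assembling these compatibilities; this is the sense in which Proposition~\ref{comm2} is the easy product analogue of Proposition~\ref{comm1}.
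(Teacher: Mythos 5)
The paper's own proof of Proposition~\ref{comm2} is a one-line citation to Hanlon--Hicks and Gammage--Shende, so your K\"unneth-based argument is a genuinely different (and far more explicit) route: you factor the top arrow as $\mathrm{id}\otimes c$ through the product identification $U_{-,+}\cup U_{+,+}\cong\widehat{\Sigma_-}\times\Sigma_+$, match it against $i_*=(-)\boxtimes\mathcal{O}_0$ on the B-side, and reduce commutativity to the once-stopped-cylinder mirror and compatibility of K\"unneth with the GPS cup functors. That is a reasonable strategy, consistent with the paper's label $i_*$ on the corresponding arrow in Figure~\ref{marker}, and you correctly flag the crux (K\"unneth vs.\ cup compatibility, and compatibility of the external-product decompositions on both sides) rather than sweeping it under the rug.

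There is, however, an internal inconsistency you need to resolve before the argument closes. In the first paragraph you define $D_+ := c(k)$ to be the linking disk of \emph{the} sectorial boundary of $\Sigma_+$ --- the end glued to $\Sigma_-$, i.e.\ the stop. In the second paragraph you then assert ``the cocore of the stop [is] mirror to $\mathcal{O}_{\mathbb{C}}$ and the linking disk $D_+$ of the \emph{other} end [is] mirror to the skyscraper $\mathcal{O}_0$.'' But $\Sigma_+$ has only one stop, and the linking disk of the stop \emph{is} the cocore of the stop; there is no ``other end'' carrying a linking disk, since the puncture end is convex and unstopped. As written, your two paragraphs assign $D_+$ to different Lagrangians, and the entire commutativity hinges on whether $D_+$ goes to $\mathcal{O}_{\mathbb{C}}$ or $\mathcal{O}_0$. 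The correct resolution (and the one that makes the square commute) is that in the Gammage--Shende dictionary the cup functor $\mathcal{W}(F)\to\mathcal{W}(X,F)$ is mirror to pushforward from the boundary divisor, so $c(k)=D_+$, the linking disk of the stop, is mirror to $\mathcal{O}_0$ --- not $\mathcal{O}_{\mathbb{C}}$. You should strike the ``other end'' language and instead justify that one identification directly; otherwise the reader cannot tell whether you intend the square to commute with the closed pushforward $(-)\boxtimes\mathcal{O}_0$ or with the flat pullback $(-)\boxtimes\mathcal{O}_{\mathbb{C}}$, which are not isomorphic functors.

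Beyond that, the obstacle you name --- checking that the K\"unneth quasi-equivalence intertwines the GPS cup functors with the external-product decomposition on the B-side, through the fixed vertical equivalences --- is exactly what the paper outsources to \cite{MR4394684, MR4578536}, and you would either need to cite the relevant compatibility statements there or prove them; your proposal does not yet do either.
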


\begin{proof}
    This is by Hanlon-Hicks and Gammage-Shende \cite{MR4578536}.
\end{proof}

\begin{theorem}
The homological mirror symmetry conjecture holds for the second symmetric product of the $4$ punctured sphere $\Sym^2(\mathbb{P}^1 - 4 \mbox{pts})$ and $\{xyz=0\}\subset \mathbb{C}^3$. More precisely, the wrapped Fukaya category of the second symmetric product of the $4$ punctured sphere is equivalent to the derived category of coherent sheaves on $\{xyz=0\}\subset \mathbb{C}^3$.
\end{theorem}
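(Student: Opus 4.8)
The plan is to realize both sides of the asserted equivalence as the homotopy pushout of one and the same span of categories, and then to transport that span across mirror symmetry one vertex at a time.

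First I would set up the symplectic side. Theorem~\ref{mainthm}, applied to $\Sigma=\PP^1-4\,\text{pts}$ with its distinguished separating saddle on $\gamma$, produces the sectorial decomposition $\Sym^2(\Sigma)=U_{-,-}\cup_{H_{0,-}}U_{-,+}\cup_{H_{0,+}}U_{+,+}$. Since $\mathcal{W}(U_{+,+})=0$, gluing $U_{+,+}$ back onto $U_{-,+}$ along $H_{0,+}$ merely fills in (removes) a stop, so the two right-hand pieces combine into a single sector $U_{-,+}\cup U_{+,+}$, leaving a sectorial cover of $\Sym^2(\Sigma)$ by the two sectors $U_{-,-}$ and $U_{-,+}\cup U_{+,+}$ whose only common sectorial hypersurface is $H_{0,-}$; by Proposition~\ref{adjfiber} the fiber of $H_{0,-}$ is the completion $\widehat{\Sigma_-}\cong P=\PP^1-3\,\text{pts}$. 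The local-to-global principle of Ganatra--Pardon--Shende \cite{MR4695507} then expresses $\mathcal{W}(\Sym^2(\Sigma))$ as the homotopy pushout of the span $\mathcal{W}(U_{-,-})\longleftarrow\mathcal{W}(P)\longrightarrow\mathcal{W}(U_{-,+}\cup U_{+,+})$, i.e. as the bottom-right corner of the first push-out square displayed above.

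Next I would record the algebraic side: inside $\mathbb{C}^3$ one has the transverse union $\{xyz=0\}=\{xy=0\}\cup\{z=0\}$ with scheme-theoretic intersection $\{xy=z=0\}\cong\{xy=0\}\subset\mathbb{C}^2$, and the classical gluing of coherent sheaves along this configuration of coordinate subspaces (the second push-out square above) presents $D^b\mathrm{Coh}(\{xyz=0\})$ as the homotopy pushout of $D^b\mathrm{Coh}(\mathbb{C}^2)\longleftarrow D^b\mathrm{Coh}(\{xy=0\})\longrightarrow D^b\mathrm{Coh}(\{xy=0\}\times\mathbb{C})$, the maps being pushforward along the evident closed embeddings. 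Then I would match the two spans vertex by vertex: $\mathcal{W}(U_{-,-})\cong D^b\mathrm{Coh}(\mathbb{C}^2)$ via the identification $U_{-,-}\cong((\mathbb{C}^*)^2,W=u_1+u_2)$ (Proposition~\ref{sym}) and the work of Abouzaid and Fang--Liu--Treumann--Zaslow; $\mathcal{W}(U_{-,+}\cup U_{+,+})\cong\mathcal{W}(\widehat{\Sigma_-}\times\Sigma_+)\cong D^b\mathrm{Coh}(\{xy=0\})\otimes D^b\mathrm{Coh}(\mathbb{C})\cong D^b\mathrm{Coh}(\{xy=0\}\times\mathbb{C})$ (Proposition~\ref{product}, a K\"unneth argument, and Gammage--Shende \cite{MR4578536}); and $\mathcal{W}(P)\cong D^b\mathrm{Coh}(\{xy=0\})$ by Abouzaid--Auroux--Efimov--Katzarkov--Orlov \cite{MR3073884} and Gammage--Shende \cite{MR4578536}. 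Propositions~\ref{comm1} and~\ref{comm2} say precisely that, under these identifications, the two maps out of $\mathcal{W}(P)$ in the span are intertwined with the two pushforwards out of $D^b\mathrm{Coh}(\{xy=0\})$; together with the three vertex equivalences this is an equivalence of spans, so taking homotopy pushouts yields $\mathcal{W}(\Sym^2(\PP^1-4\,\text{pts}))\cong D^b\mathrm{Coh}(\{xyz=0\})$.

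The genuine content sits in Propositions~\ref{comm1} and~\ref{comm2}: one must verify that the functors induced by the Liouville-sector inclusions coincide, under toric and pair-of-pants mirror symmetry, with pushforward along the closed embeddings $\{xy=0\}\hookrightarrow\mathbb{C}^2$ and $\{xy=0\}\hookrightarrow\{xy=0\}\times\mathbb{C}$, which is where the functoriality results of Hanlon--Hicks \cite{MR4394684} and Gammage--Shende \cite{MR4578536} are invoked. Beyond that, the two points that need care are that a single fixed equivalence $\mathcal{W}(P)\cong D^b\mathrm{Coh}(\{xy=0\})$ must be used compatibly in both legs of the span, and that the Ganatra--Pardon--Shende homotopy colimit must genuinely reduce to a pushout over one span --- which is exactly why $U_{-,+}$ is merged with $U_{+,+}$ first, so that only two sectors, meeting along the single connected hypersurface $H_{0,-}$, remain.
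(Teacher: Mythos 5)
Your proposal is correct and follows essentially the same route as the paper: the Ganatra--Pardon--Shende local-to-global pushout over the span with middle vertex $\mathcal{W}(P)$, the parallel algebraic pushout for $D^b\mathrm{Coh}(\{xyz=0\})$ as a union of $\{xy=0\}\times\mathbb{C}$ and $\mathbb{C}^2_{z=0}$ along $\{xy=0\}$, vertex identifications via the Landau--Ginzburg presentations of $U_{-,-}$ and $U_{-,+}\cup U_{+,+}$ and known HMS results, and the intertwining of the two legs via Propositions~\ref{comm1} and~\ref{comm2}. Your explicit remarks about why the GPS colimit collapses to a single-span pushout (after absorbing $U_{+,+}$ into $U_{-,+}$) and about using one fixed equivalence $\mathcal{W}(P)\cong D^b\mathrm{Coh}(\{xy=0\})$ consistently in both legs are exactly the points the paper's diagram chase tacitly relies on.
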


\begin{proof}
The vertical faces of the commutative diagrams follow from Proposition \ref{comm1} and Proposition \ref{comm2}.  We can use the following commutative diagram to show the theorem.

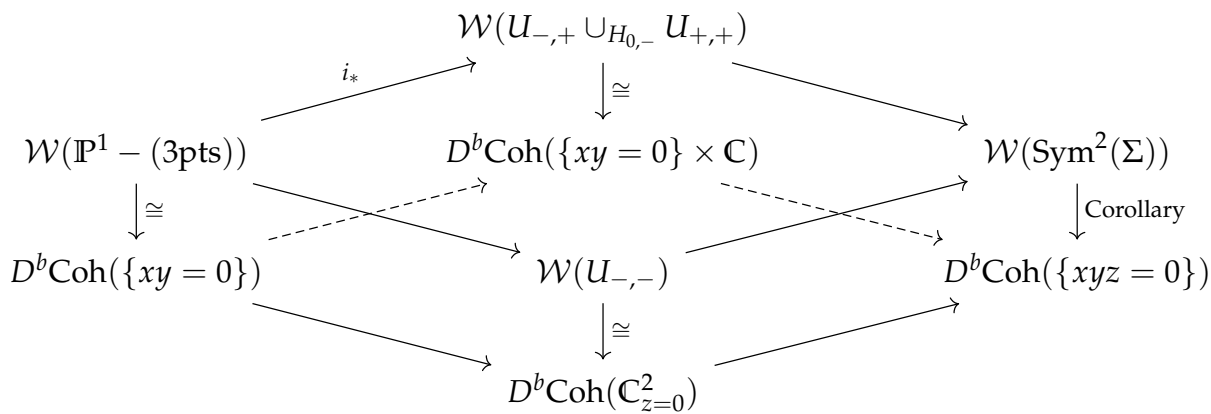
\begin{figure}[h]

 \[\begin{tikzcd}[scale=0.9]
&& {\mathcal{W}(U_{-,+} \cup_{H_{0,-}} U_{+,+})} \\
{\mathcal{W}(\mathbb{P}^1 - (3\text{pts}))} && {D^b\mathrm{Coh}(\{xy=0\}\times \mathbb{C})} && {\mathcal{W}(\Sym^2(\Sigma))} \\
{D^b \mathrm{Coh}(\{xy=0\})} && {\mathcal{W}(U_{-,-})} && {D^b \mathrm{Coh}(\{xyz=0\})} \\
&& {D^b \mathrm{Coh}(\mathbb{C}^2_{z=0})}
\arrow["\cong", from=1-3, to=2-3]
\arrow[from=1-3, to=2-5]
\arrow["{i_*}", from=2-1, to=1-3]
\arrow["\cong", from=2-1, to=3-1]
\arrow[from=2-1, to=3-3]
\arrow[dashed, from=2-3, to=3-5]
\arrow["{ \text{Corollary}}", from=2-5, to=3-5]
\arrow[dashed, from=3-1, to=2-3]
\arrow[from=3-1, to=4-3]
\arrow[from=3-3, to=2-5]
\arrow["\cong", from=3-3, to=4-3]
\arrow[from=4-3, to=3-5]
\end{tikzcd}\]
  \label{fig:mypic}
  \caption{HMS for the $2$-dimensional pair of pants}
  \label{marker}
\end{figure}
The categories at the right are pushouts, which are determined by the rest of diagram and therefore equivalent. 
\end{proof}

\bibliographystyle{alpha}
\bibliography{references}

\end{document}